\documentclass[]{article}

\newcommand{\un}{\text{ $\underline{\triangleright}$ }}
\newcommand{\ovr}{\text{ $\overline{\triangleright}$ }}
\usepackage[none]{hyphenat}
\usepackage{xcolor, array, amssymb}
\usepackage{amsmath,amsfonts,graphicx,amsthm,geometry,setspace,subcaption,overpic}
\usepackage{etoolbox,tikz-cd,rotating,url}

\theoremstyle{definition}
\newtheorem{theorem}{Theorem}[subsection]
\newtheorem{example}{Example}[subsection]
\newtheorem{definition}{Definition}[subsection]
\newtheorem{proposition}{Proposition}[subsection]

\newtheorem{corollary}{Corollary}[subsection]
\newtheorem{remark}{Remark}[subsection]

\title{Persistent Homology of Biquandle Coloring Quivers}
\author{Hamdi Kayaslan}

\begin{document}
	\maketitle
    \onehalfspacing
	\begin{abstract}
        In this paper, we extend the notion of directed clique complex to quivers and introduce an associated homology theory. By applying this construction to biquandle coloring quivers, we obtain new invariants of links. We then introduce a quiver filtration-valued invariant of links induced by filtrations of biquandle endomorphism sets. We construct persistent homological invariants of links by applying persistence techniques to these quiver filtrations through the introduced homology theory. 
	\end{abstract}

\section{Introduction}

Knot theory studies embeddings of circles in $S^3$ and more generally collections of such circles, called \textit{links}. In addition to the classical setting, virtual knot theory, introduced by Louis H. Kauffman in \cite{kauffman2012introduction}, studies links in thickened surfaces. These theories admit diagrammatic descriptions on surfaces in which crossings encode the interaction of strands. This description provides a combinatorial setting for defining and studying invariants of classical and virtual links.

Many invariants of classical and virtual links arise from assigning algebraic data to the link diagrams which are preserved under the underlying equivalence moves, called the \textit{Reidemeister moves}. Among these, \textit{biquandles} -whose axioms encode Reidemeister moves- were introduced in \cite{fenn1995trunks} and provide computable tools for constructing invariants for classical and virtual links, see \cite{fenn2004biquandles, hrencecin2007biquandles,nelson2017quantum,nelson2019biquandle,nelson2024quandle}. In particular, colorings of link diagrams by finite biquandles give rise to counting invariants of classical and virtual links.

An invariant $I'$ is called an \textit{enhancement} of an invariant $I$ if it retains the information of $I$. The enhancement is called \textit{proper} if $I'$ distinguishes links that are not distinguished by $I$. Biquandle counting invariants have been enhanced through additional algebraic and combinatorial constructions. One such proper enhancement is the \textit{biquandle coloring quiver}, introduced in \cite{ceniceros2023psyquandle} analogously to quandle coloring quivers \cite{cho2019quandle}, to enrich the counting invariant by utilizing biquandle endomorphisms. In the same paper, the \textit{in-degree quiver polynomial invariant} of links was introduced using biquandle coloring quivers.

While biquandle coloring quivers provide a richer invariant than the biquandle counting invariant, their structure as directed graphs suggests the possibility of obtaining new invariants of links as it was pointed out in \cite{cho2019quandle}. In particular, it is natural to ask what kind of new and computable invariants of links can be obtained by applying homological tools of algebraic topology to biquandle coloring quivers.

The topology of directed graphs is of great interest and has been studied extensively; see, for example, \cite{caputi2024hochschild, grigoryan2015cohomology, hepworth2025reach, ivanov2024simplicial}. In particular, combinatorial constructions such as the \textit{directed clique complex} (also referred to as the \textit{directed flag complex}) provide a way to associate higher-dimensional topological structures to directed graphs. This perspective was introduced in the context of network topology in \cite{masulli2016topology} and has since been utilized in applications, for instance in the study of neuronal networks \cite{reimann2017cliques}, where higher-order connectivity patterns are encoded via simplicial complexes.

However, such constructions are typically defined for simple directed graphs and do not directly incorporate multiplicities of edges, and hence are not immediately applicable to quivers. On the other hand, quivers and directed multigraphs have been studied from a homological perspective in \cite{grigoryan2018path} via \textit{path homology}, where multiplicities of directed edges play a central role in the theory.

Motivated by these two perspectives, we introduce a generalization of the directed clique (flag) complex to quivers by incorporating edge multiplicities through a threshold condition. More precisely, we define the \textit{$N$-directed clique complex}, where $N \geq 1$ is an integer, by requiring the existence of at least $N$ directed edges between vertices in the given orientation. This leads to a corresponding notion of \textit{$N$-directed clique homology} of quivers. By applying this construction to biquandle coloring quivers, we obtain a new homological invariant of links, where the strength and computational complexity of the invariant can be adjusted through the choice of $N$. 

\begin{theorem}
	\label{main1}
	Let $L$ be a link, $X$ a finite biquandle, $S\subseteq$ End$(X)$, and $N\geq 1$. Then, for all $n\geq 0$ and $N\geq1$, the $n$-th $N$-directed clique homology group $H_n^{(N)}(L;X,S)$ is an invariant of links.
\end{theorem}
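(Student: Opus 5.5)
The plan is to factor the invariant through two facts. First, the biquandle coloring quiver $\mathcal{Q}_X^S(L)$ is itself a link invariant up to quiver isomorphism. Second, the $N$-directed clique homology $H_n^{(N)}$ is functorial under quiver isomorphisms. Then $H_n^{(N)}(L;X,S)$, defined as $H_n^{(N)}$ of the $N$-directed clique complex of $\mathcal{Q}_X^S(L)$, is an invariant up to group isomorphism.

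For the first fact, I would recall the construction from \cite{ceniceros2023psyquandle}. The vertices are the $X$-colorings of a fixed diagram $D$ of $L$. For each $f\in S$ and each coloring $v$, there is a directed edge $v\to f\circ v$. If $D'$ differs from $D$ by a Reidemeister move (classical or virtual), the biquandle axioms give a bijection $\phi:\mathcal{C}_X(D)\to\mathcal{C}_X(D')$. This bijection is the identity on the colors of strands outside the local picture, and it is uniquely determined inside. Because each $f\in S$ is a biquandle endomorphism, $f\circ v$ is again a coloring, and by uniqueness of the local extension $\phi(f\circ v)=f\circ\phi(v)$. Hence $\phi$ carries the edge labeled $f$ out of $v$ bijectively onto the edge labeled $f$ out of $\phi(v)$. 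So $\phi$ is a quiver isomorphism preserving edge multiplicities between every ordered pair of vertices. This is the fact I would cite from the earlier paper rather than reprove, noting only that the multiplicity preservation is what matters here.

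For the second fact, let $\Phi:Q\to Q'$ be an isomorphism of quivers, meaning a bijection on vertices such that the number of arrows $u\to w$ equals the number of arrows $\Phi(u)\to\Phi(w)$. An ordered simplex $(v_0,\dots,v_n)$ of the $N$-directed clique complex requires at least $N$ arrows $v_i\to v_j$ for all $i<j$. This condition is therefore preserved by $\Phi$, and so is the face relation. Thus $\Phi$ induces a simplicial isomorphism of the $N$-directed clique complexes, which commutes with the boundary maps $\partial(v_0,\dots,v_n)=\sum(-1)^i(v_0,\dots,\hat v_i,\dots,v_n)$. This gives a chain isomorphism, hence isomorphisms $H_n^{(N)}(Q)\cong H_n^{(N)}(Q')$ for every $n\ge0$ and $N\ge1$.

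Combining the two facts, any two diagrams of $L$ give isomorphic homology groups, which proves the statement. The main obstacle is the first step, specifically verifying that the coloring bijection commutes with every $f\in S$ for all move types, including virtual moves. I would handle it uniformly through the uniqueness of colorings on the local tangle given the boundary colors: $f\circ\phi(v)$ and $\phi(f\circ v)$ agree outside the move region, so they must agree everywhere.
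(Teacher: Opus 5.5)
Your proposal is correct and follows the paper's proof. Both arguments combine the invariance of the coloring quiver up to multiplicity-preserving isomorphism (Theorem~\ref{quivinv}, cited from Ceniceros--Christiana--Nelson) with Proposition~\ref{quivinv2}, which says a quiver isomorphism induces a simplicial, hence chain, isomorphism of the $N$-directed clique complexes; your added sketch of why the coloring bijection commutes with each endomorphism in $S$ justifies the cited fact, which the paper does not reprove.
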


A \textit{filtration} of an object is a sequence of its subobjects forming an increasing chain. We enhance the biquandle coloring quiver invariant by introducing the \textit{biquandle coloring quiver filtration} of links obtained from filtrations of biquandle endomorphism sets.

\begin{theorem}
	\label{maint2}
	Let $L$ be a link, $X$ a finite biquandle, and $S_\ast$ a filtration of End$(X)$. The $X$-coloring quiver filtration $\mathcal{Q}_X^{S_\ast}(L)$ is an invariant of links.
\end{theorem}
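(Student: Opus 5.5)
We reduce Theorem~\ref{maint2} to the invariance of each biquandle coloring quiver, as in \cite{ceniceros2023psyquandle}, and then check that the inclusions between stages are respected. We read a filtration $S_\ast$ of End$(X)$ as a chain $S_0\subseteq S_1\subseteq\cdots\subseteq S_k=\text{End}(X)$. The quiver $\mathcal{Q}_X^{S_i}(L)$ has the $X$-colorings of a diagram $D$ of $L$ as vertices, and one edge $c\to f\circ c$ for each $f\in S_i$. So $\mathcal{Q}_X^{S_\ast}(L)$ is a chain of subquivers $\mathcal{Q}_X^{S_0}(D)\subseteq\cdots\subseteq\mathcal{Q}_X^{S_k}(D)$, all with the same vertex set. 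The claim is that if $D$ and $D'$ differ by a Reidemeister move (classical or virtual), there is an isomorphism of filtered quivers, i.e.\ a family of quiver isomorphisms $\phi_i$ commuting with the inclusions.

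\textbf{Step 1: a bijection of colorings.} Fix a single Reidemeister move taking $D$ to $D'$. The biquandle axioms give a bijection $\beta:\mathrm{Col}_X(D)\to\mathrm{Col}_X(D')$. It agrees on arcs outside the disk where the move happens. Inside the disk, the colors are uniquely determined by the boundary colors.

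\textbf{Step 2: compatibility with endomorphisms.} We show $\beta(f\circ c)=f\circ\beta(c)$ for every $f\in\text{End}(X)$. First, $f\circ\beta(c)$ is a valid coloring of $D'$, because $f$ preserves both biquandle operations. Second, outside the disk it agrees with $f\circ c$, which equals $\beta(f\circ c)$ there. Since interior colors are determined by boundary colors, the two colorings coincide. This is the one substantive point, and it is the same argument as in the unfiltered case.

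\textbf{Step 3: stage-by-stage isomorphisms.} Define $\phi_i$ to be $\beta$ on vertices, and send the edge $(c,f)$ to $(\beta(c),f)$ for $f\in S_i$. By Step 2, $\phi_i$ is an isomorphism $\mathcal{Q}_X^{S_i}(D)\cong\mathcal{Q}_X^{S_i}(D')$. It preserves the edge multiplicities, since edges are indexed by $S_i$ itself.

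\textbf{Step 4: naturality and composition.} The vertex map $\beta$ and the edge labels do not depend on $i$. Hence $\phi_j$ restricted to $\mathcal{Q}_X^{S_i}(D)$ equals $\phi_i$ for $i\le j$, so the inclusion squares commute. Composing along any sequence of Reidemeister moves gives an isomorphism of filtered quivers. Therefore $\mathcal{Q}_X^{S_\ast}(L)$ is well-defined up to filtered isomorphism.

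The main obstacle is making sure the isomorphisms are \emph{simultaneous}, that is, induced by one vertex bijection $\beta$ chosen independently of $f$ and $i$. An arbitrary isomorphism at each stage would not be enough. This is exactly what Step 2 secures, and after it the filtration structure follows automatically.
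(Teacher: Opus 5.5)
Your proposal is correct and follows the same route as the paper. A single bijection of colorings, chosen independently of the stage, induces all the quiver isomorphisms $\phi_i$ at once, so $\phi_j$ restricts to $\phi_i$ and the inclusion squares commute; your Step 2 (the equivariance $\beta(f\circ c)=f\circ\beta(c)$) makes explicit what the paper leaves implicit, via the cited invariance of $\mathcal{Q}_X^S(L)$, when it says the bijection ``induces a quiver isomorphism.'' Also note that the paper deliberately does not require $S_m=\text{End}(X)$, but your argument never uses that assumption, so nothing changes.
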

\noindent This invariance ensures that any functorial construction applied to the biquandle coloring quiver filtration yields an invariant of links. In particular, applying quiver invariants to each stage produces ordered multiset invariants of links. For instance, we apply the in-degree polynomial and obtain the \textit{ordered in-degree polynomial set} invariant of links.

Persistent homology studies the evolution of homological features across filtrations of topological spaces and it is a central tool of topological data analysis. It was introduced in \cite{edelsbrunner2002topological}, and was further developed algebraically and computationally in \cite{zomorodian2005computing}. Since then, it has been studied extensively in \cite{carlsson2009topology, carlsson2010zigzag, chazal2016structure, cohen2007stability} and many more. The \textit{birth} and \textit{death} of homological features along a filtration determine intervals, called \textit{persistence intervals}. The multiset of these persistence intervals is called \textit{persistence barcode}, see \cite{ghrist2008barcodes}. In many applications, short persistence intervals are considered as noise and discarded, while longer intervals signal significant topological features.

Persistence provides a powerful homological summary of filtered topological data. However, it is not, in general, an invariant of topological spaces. The reason is that the filtering methods are sensitive to deformations of topological spaces and may result in non-isomorphic filtrations in general. Hence, instead, the strength of persistence lies in its stability. That is, under suitable perturbations of the input filtration, the difference on the resulting persistence barcodes are small. This stability property makes persistent homology effective in applications where data includes noise. See \cite{chazal2016structure, cohen2007stability}.

Persistent homology has recently been extended to directed networks through a variety of approaches; see, for example, \cite{aktas2019persistence, carstens2013persistent, dey2022efficient, lin2019weighted}. In particular, in \cite{lutgehetmann2020computing}, persistent homology of directed flag complexes was studied, providing a framework for computing persistence barcodes associated to directed graphs via simplicial constructions. Another approach in \cite{chowdhury2018persistent} introduces \textit{persistent path homology}, which associates stable persistence invariants to weighted directed networks. More recently, alternative approaches have been developed to incorporate directionality more intrinsically, such as the directed persistent homology theory for dissimilarity functions introduced in \cite{mendez2023directed}, where asymmetry is encoded at the level of the underlying data.

These constructions demonstrate that persistent homology can be successfully adapted to directed settings, either by modifying the underlying simplicial complexes or by altering the algebraic framework of homology. However, most existing approaches are formulated for directed graphs without multiplicities or arise from metric or weight-based data.

In contrast, the approach taken in this paper is combinatorial in nature and is based on quivers, where multiplicities of directed edges play a fundamental role.
We incorporate the $N$-directed clique homology construction into a persistence framework by considering filtrations of biquandle coloring quivers induced by filtrations of biquandle endomorphism sets. This enables us to associate persistent homology groups and barcodes to links in a natural and diagrammatically invariant way (up to generalized Reidemeister moves).

\begin{theorem}
	\label{main2}
	Let $L$ be a link, $X$ a finite biquandle and $S_\ast$ a filtration of End$(X)$. For $i \leq j$, the $n$-th persistent $N$-directed clique homology group $H_{n}^{(N),i,j}(L;X,S_\ast)$ is an invariant of links for $n\geq 0$ and $N\geq1$.
\end{theorem}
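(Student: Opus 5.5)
The plan is to reduce Theorem~\ref{main2} to Theorem~\ref{maint2} together with functoriality of the $N$-directed clique complex construction. First I would make precise what "invariant" means here. If $L$ and $L'$ are related by (generalized) Reidemeister moves, I want an isomorphism $H_{n}^{(N),i,j}(L;X,S_\ast)\cong H_{n}^{(N),i,j}(L';X,S_\ast)$ for every $i\le j$. Better still, I want a single family of such isomorphisms that commutes with the structure maps, so that the barcodes also agree.

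By Theorem~\ref{maint2}, the filtrations $\mathcal{Q}_X^{S_\ast}(L)$ and $\mathcal{Q}_X^{S_\ast}(L')$ are isomorphic as quiver filtrations. I would unpack this concretely. A Reidemeister move induces a bijection $\phi\colon \mathcal{C}_X(L)\to\mathcal{C}_X(L')$ between $X$-colorings, and it is compatible with the action of endomorphisms: $\phi(f\circ c)=f\circ\phi(c)$ for every $f\in\mathrm{End}(X)$. This compatibility holds because the coloring of the new diagram is determined locally by the colors outside the move region. Consequently, for each stage $S_k$, $\phi$ maps the edges labeled by $f\in S_k$ bijectively to edges labeled by $f$. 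So $\phi$ gives a quiver isomorphism $\phi_k\colon Q_k(L)\to Q_k(L')$ that preserves edge multiplicities between every ordered pair of vertices. Moreover, $\phi_k$ is the same vertex map $\phi$ for all $k$, so these isomorphisms commute with the inclusions $Q_k\hookrightarrow Q_{k+1}$.

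Next I would check functoriality of the $N$-directed clique complex. A quiver morphism that does not decrease edge multiplicities, which holds for inclusions of subquivers on the same vertex set as well as for isomorphisms, sends each ordered simplex $(v_0,\dots,v_m)$ to an ordered simplex. The reason is that if there are at least $N$ edges $v_a\to v_b$ for all $a<b$, then there are at least $N$ edges between the images. This gives simplicial maps $\mathrm{dFl}^{(N)}(Q_k)\to\mathrm{dFl}^{(N)}(Q_{k+1})$, together with isomorphisms $\mathrm{dFl}^{(N)}(Q_k(L))\cong\mathrm{dFl}^{(N)}(Q_k(L'))$. Because the vertex maps commute, the resulting ladder diagram of chain complexes commutes on the nose. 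Applying $H_n$ then gives a commutative square relating $\iota_{i,j}\colon H_n^{(N)}(Q_i)\to H_n^{(N)}(Q_j)$ for $L$ and for $L'$, in which the vertical maps are isomorphisms. The persistent homology group $H_n^{(N),i,j}=\operatorname{im}\iota_{i,j}$ is therefore carried isomorphically onto its counterpart. This is the same argument as in Theorem~\ref{main1}, now carried out levelwise.

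The main obstacle is the compatibility step: I must ensure that the isomorphisms supplied by Theorem~\ref{maint2} are \emph{natural}, that is, one coherent map of filtrations, rather than just stagewise abstract isomorphisms. Stagewise isomorphisms alone would not control the images of $\iota_{i,j}$. I would address this by insisting on the explicit coloring bijection $\phi$, which is independent of $k$, and verifying its endomorphism-equivariance directly from the biquandle axioms at each move. If the proof of Theorem~\ref{maint2} already provides such a filtration isomorphism, this reduces to citing it. Since the moves compose, the argument extends to arbitrary sequences of moves.
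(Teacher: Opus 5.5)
Your proposal is correct and follows the paper's proof essentially step for step. Theorem~\ref{filtinv} gives stagewise quiver isomorphisms induced by one coloring bijection, so they commute with the inclusions; applying $H_n^{(N)}$ then yields commutative squares with isomorphic vertical maps, which carry the image of $H_n^{(N)}(L_1;X,S_i)\to H_n^{(N)}(L_1;X,S_j)$ onto the corresponding image for $L_2$, and your extra checks (endomorphism-equivariance of the coloring bijection, and that multiplicity-nondecreasing vertex bijections preserve $N$-directed cliques) just make precise what the paper leaves to Theorem~\ref{filtinv} and Remark~\ref{func}.
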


Moreover, in contrast to classical applications of persistent homology, we show that all persistence intervals carry meaningful information in our combinatorial setting. In particular, even the shortest intervals may distinguish non-equivalent links. Additionally, we introduce an invariant that records the multiplicities of instantly vanishing homology classes in the filtration as matrices which we call \textit{stillborn matrices}. We define the \textit{persistence pair invariant} of links as the pair of persistence barcode and stillborn matrix. We show that the persistence pair invariant is a proper enhancement of both persistence barcode invariant and stillborn matrix invariant.

This work applies homological techniques from topological data analysis to link invariants that are based on biquandles and biquandle colorings of links. This establishes a bridge between knot theory and topological data analysis which may lead to new directions for further research.

The organization of the paper is as follows. In Section~\ref{pre}, we briefly review the ingredients we use in the body of the paper. In particular, we give basics of classical and virtual links in Section~\ref{cvlinks}. In Section~\ref{biqus}, we review biquandles and invariants arising from biquandle colorings. In Section~\ref{homologies}, we mention directed clique complexes of directed graphs. Section~\ref{persistenthom} covers the basics of persistent homology. We present our results in Section~\ref{mainsection1}. In Section~\ref{ndirectsec}, we introduce $N$-directed clique complexes, study their homology and prove Theorem~\ref{main1}. In Section~\ref{filters}, we introduce biquandle coloring quiver filtrations of links, show its invariance by proving Theorem~\ref{maint2} and point out its relations with the biquandle coloring quiver. In Section~\ref{persistsec}, we apply the persistence approach through the $N$-directed clique homology to biquandle coloring quiver filtrations and prove Theorem~\ref{main2}. We obtain the persistence barcode invariant of links as an enhancement of the biquandle counting invariant. We then enhance the persistence barcode invariant using the stillborn matrices and prove that the enhancements in this section are proper by giving concrete examples. We conclude with an appendix briefly explaining the Python code that we use in our computations, which is available on Github.

\section{Preliminaries}
\label{pre}
\subsection{Classical and virtual links}
\label{cvlinks}
Knot theory studies embeddings of circles in three-dimensional spaces and includes both classical knot theory and its extension, virtual knot theory. More generally, one considers \textit{links}, which consist of finitely many disjoint embedded circles in three-dimensional spaces. The theory of such embeddings in $S^3$ is called the \textit{classical link theory}, where the theory of embeddings in thickened surfaces is called the \textit{virtual link theory}. A \textit{knot} is a link consisting of a single component. 

These theories can be studied via diagrammatic representations on surfaces. A \textit{classical link diagram} is an immersion of finitely many copies of the unit circle $S^1$ in the plane with finitely many self-intersections where each self-intersection is transversal and endowed with under- or over-passing information. Such self-intersections are called \textit{classical crossings} and illustrated in Figure~\ref{cc}. See Figure~\ref{ck} for an example of a classical knot diagram (a single component classical link diagram) and Figure~\ref{cl} for a classical link diagram with two components. A \textit{classical link} is defined as an equivalence class of classical link diagrams up to the \textit{classical Reidemeister moves} shown in Figure~\ref{vcrm}; we refer to this equivalence as \textit{classical equivalence}.

\begin{figure}
	\centering    
	\begin{subfigure}{0.5\textwidth}
		\centering
		\includegraphics[width=33mm]{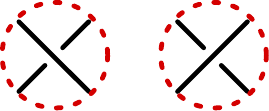}
		\caption{Classical crossings.}
		\label{cc}
	\end{subfigure}\hfill
	\begin{subfigure}{0.5\textwidth}
		\centering
		\includegraphics[width=14mm]{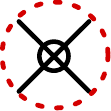}
		\caption{Virtual crossing.}
		\label{vc}
	\end{subfigure}
	\caption{}
	\label{virtualknotoidmoves}
\end{figure}

\begin{figure}
	\centering 
	\begin{subfigure}{0.5\textwidth}
		\centering
		\includegraphics[width=27mm]{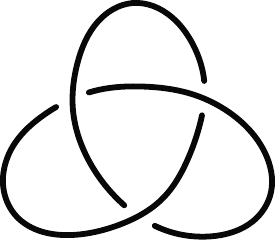}
		\vspace{0.3 cm}
		\caption{A classical knot diagram.}
		\label{ck}
	\end{subfigure}\hfill
	\begin{subfigure}{0.5\textwidth}
		\centering
		\includegraphics[width=23mm]{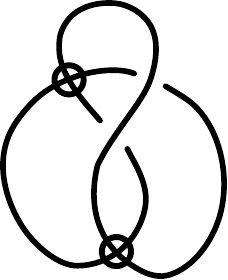}
		\vspace{0.3 cm}
		\caption{A virtual knot diagram.}
		\label{vk}
	\end{subfigure}
	\begin{subfigure}{0.5\textwidth}
		\centering
		\includegraphics[width=30mm]{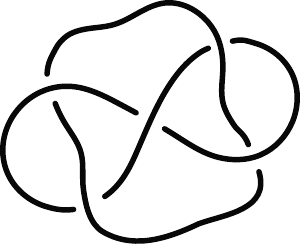}
		\caption{A classical link diagram.}
		\label{cl}
		\vspace{0.3 cm}
	\end{subfigure}\hfill
	\begin{subfigure}{0.5\textwidth}
		\centering
		\includegraphics[width=28mm]{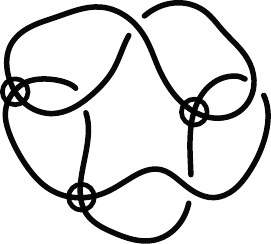}
		\caption{A virtual link diagram.}
		\vspace{0.3 cm}
		\label{vl}
	\end{subfigure}
	
	\caption{}
	\label{virtualknotoidmoves}
\end{figure}

\begin{figure}
	\centering    
	\begin{subfigure}{0.5\textwidth}
		\centering
		\includegraphics[width=40mm]{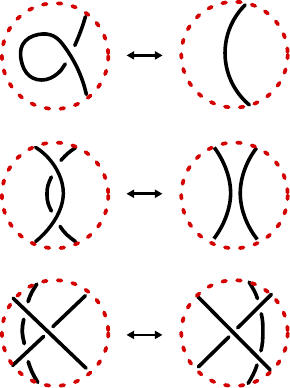}
		\caption{Classical Reidemeister moves.}
		\label{vcrm}
		\vspace{0.3 cm}
	\end{subfigure}\hfill
	\begin{subfigure}{0.5\textwidth}
		\centering
		\includegraphics[width=40mm]{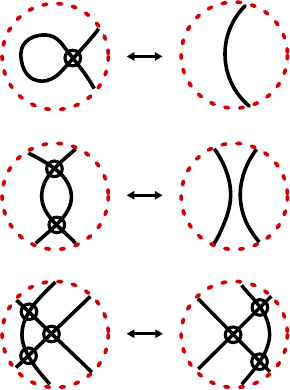}
		\caption{Virtual Reidemeister moves.}
		\vspace{0.3 cm}
		\label{vvrm}
	\end{subfigure}
	\begin{subfigure}{0.5\textwidth}
		\centering
		\includegraphics[width=40mm]{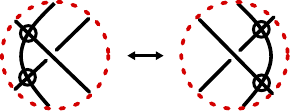}
		\vspace{0.1 cm}
		\caption{Mixed Reidemeister move.}
		\label{vmm}
	\end{subfigure}\hfill
	\caption{Generalized Reidemeister moves.}
	\label{generalizedRmoves}
\end{figure}

Virtual link diagrams extend classical link diagrams by allowing an additional type of crossing called a \textit{virtual crossing}. A virtual crossing is a transversal self-intersection without over- or under-passing information and is indicated by a small circle enclosing the crossing (see Figure~\ref{vc}). A \textit{virtual link diagram} is a link diagram containing finitely many classical and/or virtual crossings. See Figure~\ref{vk} for an example of a virtual knot diagram (a single component virtual link diagram) and Figure~\ref{vl} for an example of a virtual link diagram with two components. A \textit{virtual link} is defined as an equivalence class of virtual link diagrams up to the \textit{generalized Reidemeister moves} shown in Figure~\ref{generalizedRmoves}; we refer to this equivalence as \textit{virtual equivalence}.

In this paper, we study invariants that are based on labelings of semi-arcs of oriented (classical or virtual) link diagrams. A \textit{semi-arc} of a link diagram is a segment of the diagram between two consecutive classical crossings. For instance, the diagrams in Figures~\ref{ck}, \ref{vk}, \ref{cl} and \ref{vl} have six, four, ten and eight semi-arcs, respectively. An \textit{oriented link diagram} is a link diagram together with an \textit{orientation}, that is, a choice of direction along each component of the link diagram indicated for each component by an arrow on a semi-arc of the component.

Every classical link diagram can be regarded as a virtual link diagram without virtual crossings. It is therefore natural to ask whether classical equivalence classes of classical links coincide with their virtual equivalence classes. In other words, does classical link theory embed into virtual link theory? This question was answered affirmatively by Greg Kuperberg in \cite{kuperberg2003what}. Consequently, any invariant of virtual links restricts to an invariant of classical links. For this reason, we work with virtual links. Throughout the paper, the term link will refer to an oriented virtual link unless otherwise specified. In particular, a (classical or virtual) knot will mean a single component (virtual) link considered up to virtual equivalence.

\subsection{Biquandle coloring quivers}
\label{biqus}
We begin with the definition of a biquandle and proceed by providing some examples of biquandles.
\begin{definition} \cite{nelson2017quantum}
	A set $X$ with two operations $\un,\ovr:X\times X\rightarrow X$ such that for all $x,y,z\in X$,
	\begin{itemize}
		\item[(1)] $x\un x=x\ovr x$,
		\item[(2)] the maps
		\begin{align*}
			&\alpha_y:X\rightarrow X, \qquad \beta_y:X\rightarrow X,\qquad S:X\times X\rightarrow X\times X,\\
			&\quad x\mapsto x\ovr y\qquad x\mapsto x\un y\qquad\quad (x,y)\mapsto (y\ovr x, x\un y)
		\end{align*}
		are invertible,
		\item[(3)] and the exchange laws hold, \begin{align*}
			(x\un y)\un(z\un y)=(x\un z)\un(y\ovr z),\\
			(x\un y)\ovr(z\un y)=(x\ovr z)\un(y\ovr z),\\
			(x\ovr y)\ovr(z\ovr y)=(x\ovr z)\ovr(y\un z),
		\end{align*}
	\end{itemize}
	is called a \textit{biquandle}, denoted by $(X,\un,\ovr)$. In this paper, we use $X$ to denote a biquandle for notational convenience.
	\label{bq}
\end{definition}


\begin{example}
	\label{biqex}
	Let $A$ be a module over the ring $\mathbb{Z}[t^{\pm1},r^{\pm1}]$ and define operations $\un,\ovr:A\times A\rightarrow A$ by
	\[ x\un y=tx+(r-t)y\quad\text{ and }\quad x\ovr y=r
	x\]
	for all $x,y\in A$.
	$(A,\un,\ovr)$ is a biquandle, called the \textit{Alexander biquandle.} In particular, if $A$ is a commutative ring, one can construct an Alexander biquandle by choosing two invertible elements $t,r\in A$.
\end{example}

It is usually more convenient to express the operations of a finite biquandle using matrices. Let $X=\{x_1,x_2,...,x_n\}$ be a biquandle. Then the \textit{biquandle operation matrix} of $X$ is the augmented $n\times 2n$ matrix $M(X)=(a_{i,j})$ defined by, \[a_{i,j}= k \quad \text{if} \quad x_k=\begin{cases}
	x_i\un x_j, & 1\leq j\leq n\\
	x_i\ovr x_{j-n}, & n+1\leq j\leq 2n.
\end{cases}\]

\begin{example}
	\label{alex}
	Let $A=\mathbb{Z}_2[s]/\langle s^2+s+1\rangle$ and $t=s$, $r=s+1$. Then $A$, equipped with the operations
	\[ x\un y= sx+y,\quad x\ovr y= (s+1)x,\]
	is an Alexander biquandle. If we label the elements of $A$ by, \[x_1=0,\;x_2=1,\;x_3=s,\;x_4=s+1,\] then the biquandle operation matrix of $A$ is,
	\[M(A)=\left[\begin{array}{c c c c | c c c c}
		1& 2& 3& 4& 1& 1& 1& 1\\
		3& 4& 1& 2& 4& 4& 4& 4\\
		4& 3& 2& 1& 2& 2& 2& 2\\
		2& 1& 4& 3& 3& 3& 3& 3
	\end{array}\right].\]
\end{example}

\begin{definition} \cite{nelson2017quantum}
	Let $L$ be a link diagram whose semi-arcs are labeled and let $G(L)$ denote the set of labels of the semi-arcs of $L$. Let $R(L)$ denote the set of relations among the labels of the semi-arcs at the crossings of $L$, as shown in Figure \ref{rbqq}. The \textit{fundamental biquandle} of $L$, denoted by $\mathcal{B}(L)$, is the biquandle generated by the elements of $G(L)$ subject to the relations $R(L)$. That is,
	\[\mathcal{B}(L)=\langle G(L)\;|\; R(L)\rangle.\]
\end{definition}

\begin{figure}
	\centering
	\begin{overpic}[width=0.5\linewidth]{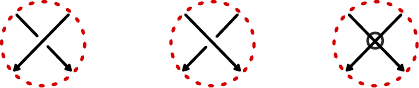}
		\put(0,19){\small$x$}
		\put(0,0.5){\small$y$}
		\put(19,0.5){\small$x\un y$}
		\put(19,19){\small$y\ovr x$}
		
		\put(40,19){\small$y$}
		\put(40,0.5){\small$x$}
		\put(59,0.5){\small$y\ovr x$}
		\put(59,19){\small$x\un y$}
		
		\put(79,19){\small$y$}
		\put(79,0.5){\small$x$}
		\put(98,0.5){\small$y$}
		\put(98,19){\small$x$}
	\end{overpic}
	\vspace{0.5 cm}
	\caption{Local labeling rules for classical and virtual crossings.}
	\label{rbqq}
\end{figure}

\begin{example}
	A virtual knot diagram $L$ with labeled semi-arcs is given in Figure~\ref{lvkd}. In Figure~\ref{crossingrels}, we illustrate the relations determined by the crossings of $L$. Thus, the fundamental biquandle of $L$ admits the presentation,
	\[\mathcal{B}(L)=\langle\, x,y,w,z\;|\; x=z\ovr w,\, y=w\un z,\, y=x\un z,\, w=z\ovr x\,\rangle.\]
	\begin{figure}
		\centering    
		\begin{subfigure}{0.5\textwidth}
			\centering
			\begin{overpic}[width=0.35\linewidth]{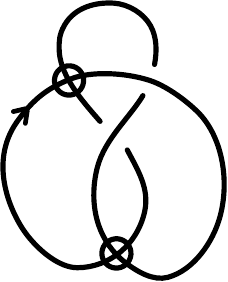}
				\put(34,88){$y$}
				\put(68,37){$z$}
				\put(50,54){$w$}
				\put(7,37){$x$}
				
				\put(7,75){\textcolor{blue}{$c_2$}}
				\put(60,73){\textcolor{blue}{$c_1$}}
				\put(37,37){\textcolor{blue}{$c_3$}}
				\put(37,-5){\textcolor{blue}{$c_4$}}
			\end{overpic}
			\vspace{0.3 cm}
			\caption{}
			\label{lvkd}
			\vspace{0.3 cm}
		\end{subfigure}\hfill
		\begin{subfigure}{0.5\textwidth}
			\centering
			\begin{overpic}[width=0.65\linewidth]{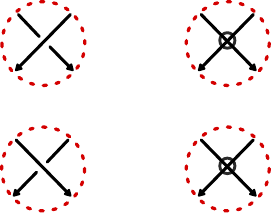}
				\put(-1,74){\small$w$}
				\put(-1,47){\small$z$}
				\put(29,47){\small$y=w\un z$}
				\put(29,74){\small$x=z\ovr w$}
				
				\put(67,74){\small$x$}
				\put(67,47){\small$y$}
				\put(97,47){\small$x$}
				\put(97,74){\small$y$}
				
				\put(-1,28){\small$z$}
				\put(-1,1){\small$x$}
				\put(29,1){\small$w=z\ovr x$}
				\put(29,28){\small$y=x\un z$}
				
				\put(67,28){\small$x$}
				\put(67,1){\small$z$}
				\put(97,1){\small$x$}
				\put(97,28){\small$z$}
				
				\put(81,55){\small\textcolor{blue}{$c_2$}}
				\put(13,55){\small\textcolor{blue}{$c_1$}}
				\put(13,9){\small\textcolor{blue}{$c_3$}}
				\put(81,9){\small\textcolor{blue}{$c_4$}}
			\end{overpic}
			\caption{}
			\vspace{0.3 cm}
			\label{crossingrels}
		\end{subfigure}
		\caption{A virtual knot diagram (a) and its crossing relations (b).}
		\label{funbiq}
	\end{figure}
\end{example}

The fundamental biquandle is an invariant of classical and virtual links. Moreover, it is complete up to reflection for classical links. That is, if the fundamental biquandles of two classical links are isomorphic, then either these links are equivalent, or one is the reflection of the other. However, determining whether two fundamental biquandles are isomorphic is a difficult task in general. We refer the reader to \cite{elhamdadi2015quandles, fenn2004biquandles, hrencecin2007biquandles} for more details on the fundamental biquandle.

\begin{definition} \cite{nelson2017quantum}
	Let $(X,\un_{\text{\hspace{-0.08 cm}\tiny{$X$}}},\ovr_{\text{\hspace{-0.08 cm}\vspace{0.1cm}\tiny{$X$}}})$ and $(Y,\un_{\text{\hspace{-0.08 cm}\tiny{$Y$}}},\ovr_{\text{\hspace{-0.08 cm}\tiny{$Y$}}})$ be two biquandles. A biquandle homomorphism is a map $f:X\rightarrow Y$ satisfying 
	\[f(x\un_{\text{\hspace{-0.08 cm}\tiny{$X$}}} \,y)=f(x)\un_{\text{\hspace{-0.08 cm}\tiny{$Y$}}} f(y), \qquad f(x\ovr_{\text{\hspace{-0.08 cm}\tiny{$X$}}} \,y)=f(x)\ovr_{\text{\hspace{-0.08 cm}\tiny{$Y$}}} f(y)\]
	for all $x,y\in X.$
\end{definition}

\begin{definition} \cite{nelson2017quantum}
	Let $X$ be a biquandle and let $L$ be a link diagram. An \textit{$X$-coloring} of $L$ is a labeling of semi-arcs of $L$ by the elements of $X$ such that the labels around each classical or virtual crossing respects the local labeling rules given in Figure~\ref{rbqq}. The set of $X$-colorings of $L$ is denoted by $\text{Col}_X(L)$. 
\end{definition}

\begin{remark}
	\label{colorinv}
	Let $L'$ be the link diagram obtained from a link diagram $L$ by a generalized Reidemeister move. The axioms of the biquandle are exactly the conditions ensuring that an $X$-coloring of $L$ corresponds to a unique $X$-coloring of $L'$. We refer the reader to \cite{nelson2017quantum} for more details and explicit derivations of the axioms of the biquandle.
\end{remark}

\begin{remark}
	Let $X$ be a biquandle and let $L$ be a link diagram. A biquandle homomorphism $f:\mathcal{B}(L)\rightarrow X$ determines an $X$-coloring of $L$. Indeed, label each semi-arc of $L$ by the corresponding generator of $\mathcal{B}(L)$. Then such a homomorphism $f$ gives a valid $X$-coloring of $L$ since the images of the labels under $f$ respect the local labeling rules for classical and virtual crossings as depicted in Figure~\ref{biqhom}. Conversely, each $X$-coloring of $L$ determines a biquandle homomorphism from $\mathcal{B}(L)$ to $X$ in the canonical way. This correspondence allows us to denote an $X$-coloring of $L$ by $f$, and use $L_f$ to denote a diagram $L$ colored by $f$.
\end{remark}

\begin{figure}
	\centering
	\begin{overpic}[width=0.4\linewidth]{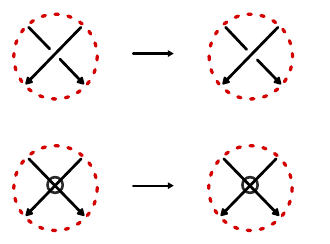}
		\put(4,67){\small$x$}
		\put(4,45){\small$y$}
		\put(28,67){\small$y\ovr x$}
		\put(28,45){\small$x\un y$}
		
		\put(4,27){\small$x$}
		\put(4,5){\small$y$}
		\put(28,27){\small$y$}
		\put(28,5){\small$x$}
		
		\put(45,60){\small$f$}
		\put(45,20){\small$f$}
		
		\put(56,67){\small$f(x)$}
		\put(56,45){\small$f(y)$}
		\put(87,67){\small$f(y\ovr x)=f(y)\ovr f(x)$}
		\put(87,45){\small$f(x\un y)=f(x)\un f(y)$}
		
		\put(56,27){\small$f(x)$}
		\put(56,5){\small$f(y)$}
		\put(87,27){\small$f(y)$}
		\put(87,5){\small$f(x)$}
	\end{overpic}
	\vspace{0.2 cm}
	\caption{Fundamental biquandle colorings at classical and virtual crossings and their images under a homomorphism $f$.}
	\label{biqhom}
\end{figure}

Biquandle colorings have been used to construct several invariants of links. An immediate example is the biquandle counting invariant, which is motivated by Remark~\ref{colorinv}. For a finite biquandle $X$, the \textit{biquandle counting invariant} is defined as the number of $X$-colorings of a link $L$, and is denoted by $\Phi_X^{\mathbb{Z}}(L)$. Note that $\Phi_X^{\mathbb{Z}}(L)=|\text{Col}_X(L)|=|\text{Hom}(\mathcal{B}(L),X)|$, where $\text{Hom}(\mathcal{B}(L),X)$ denotes the set of biquandle homomorphisms from $\mathcal{B}(L)$ to $X$.

Let End$(X)$ denote the set of biquandle endomorphisms of a biquandle $X$. In \cite{ceniceros2023psyquandle}, the biquandle coloring invariant was enhanced by using the observation that an endomorphism $\sigma\in \text{End}(X)$ composed with an $X$-coloring $f\in \text{Hom}(B(L),X)$ is again an $X$-coloring $\sigma\circ f\in \text{Hom}(B(L),X)$ of $L$.

\begin{remark}
	Throught the paper, we work with finite biquandles. If $X=\{x_1,...,x_n\}$ and $f:X\rightarrow Y$ is a biquandle homomorphism, we denote $f$ by the tuple \[ f=[f(x_1), ...,f(x_n)].\]
\end{remark}

\begin{definition} \cite{ceniceros2023psyquandle}
	Let $X$ be a finite biquandle and let $L$ be a link diagram. Associate to each $X$-coloring $f$ of $L$ a vertex $v_f$, and let $V$ denote the set of these vertices. Fix a subset $S\subseteq \text{End}(X)$. For two $X$-colorings $f$ and $g$, let there be a directed edge from $v_f$ to $v_g$, if $\alpha \circ f=g$ for some $\alpha \in S$, and denote the set of these edges by $E$. The \textit{$X$-coloring quiver of $L$}, denoted by $\mathcal{Q}_X^{S}(L)$, is a directed graph with the vertex set $V$ and the edge set $E$ possibly having loops and multi-edges. If $S=\text{End}(X)$, then the $X$-coloring quiver is called the \textit{full $X$-coloring quiver} of $L$, and is denoted by $\mathcal{Q}_X(L)$.
\end{definition}
\begin{theorem} \cite{ceniceros2023psyquandle}
	\label{quivinv}
	Let $X$ be a finite biquandle and $S\subseteq \text{End}(X)$. The $X$-coloring quiver $\mathcal{Q}_X^S(L)$ is an invariant of links.
\end{theorem}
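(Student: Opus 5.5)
We prove Theorem~\ref{quivinv} by showing that a single generalized Reidemeister move induces an isomorphism of quivers; invariance under any finite sequence of moves then follows by composing these isomorphisms. Suppose $L'$ is obtained from $L$ by one generalized Reidemeister move. By Remark~\ref{colorinv}, every $X$-coloring $f$ of $L$ corresponds to a unique $X$-coloring $\phi(f)$ of $L'$. The coloring $\phi(f)$ agrees with $f$ on all semi-arcs outside the disk where the move takes place. Inside the disk, $\phi(f)$ is forced by the local labeling rules and the biquandle axioms. Applying the same construction to the inverse move shows that $\phi:\text{Col}_X(L)\to\text{Col}_X(L')$ is a bijection. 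This gives a bijection on vertices, $v_f\mapsto v_{\phi(f)}$.

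The key step is naturality: $\phi(\alpha\circ f)=\alpha\circ\phi(f)$ for every $\alpha\in \text{End}(X)$. I would argue this using the description of colorings as homomorphisms $\mathcal{B}(L)\to X$. A Reidemeister move induces an isomorphism $\psi:\mathcal{B}(L')\to\mathcal{B}(L)$, because the new generators inside the disk are expressed through the old ones by the crossing relations. In this language $\phi(f)=f\circ\psi$. Then
\[\phi(\alpha\circ f)=\alpha\circ f\circ\psi=\alpha\circ\phi(f)\]
holds by associativity of composition.

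If one prefers to stay at the level of diagrams, the same fact can be checked move by move. Each new semi-arc label of $\phi(f)$ is a word in the operations $\un,\ovr$ applied to labels on the boundary of the disk. A homomorphism $\alpha$ commutes with these words, so applying $\alpha$ before or after recoloring gives the same result.

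Given naturality, edges correspond bijectively. An edge $v_f\to v_g$ labeled by $\alpha\in S$ exists exactly when $\alpha\circ f=g$. Since $\phi$ is injective, this holds if and only if $\phi(\alpha\circ f)=\phi(g)$, that is, $\alpha\circ\phi(f)=\phi(g)$. So the edges from $v_f$ to $v_g$ in $\mathcal{Q}_X^S(L)$ correspond, one for each $\alpha\in S$, to the edges from $v_{\phi(f)}$ to $v_{\phi(g)}$ in $\mathcal{Q}_X^S(L')$. Multiplicities and loops are therefore preserved. Hence $\mathcal{Q}_X^S(L)\cong\mathcal{Q}_X^S(L')$ as quivers.

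The main obstacle is verifying that $\phi$ is well defined and equivariant for every move type, in particular the third Reidemeister move and the mixed move. The homomorphism formulation handles all move types at once, provided we accept that the fundamental biquandle is an invariant. I would first try to rely on that fact, which the paper states. If a more self-contained argument is needed, I would fall back on the move-by-move check, noting that each forced label is a biquandle word in the boundary labels.
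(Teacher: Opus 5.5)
Your proof is correct and is essentially the standard argument from the cited source. This paper only quotes Theorem~\ref{quivinv}, and its proof of Theorem~\ref{filtinv} tacitly uses your naturality step: identifying colorings with $\text{Hom}(\mathcal{B}(L),X)$, precomposition with an isomorphism $\psi$ commutes with postcomposition by each $\alpha\in S$, so $e(v_f,v_g)=|\{\alpha\in S:\alpha\circ f=g\}|$ is preserved. In fact any isomorphism $\mathcal{B}(L')\cong\mathcal{B}(L)$ would serve, so you do not need to match $f\circ\psi$ with the local recoloring of Remark~\ref{colorinv}, nor the move-by-move fallback.
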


It is clear that for a link diagram $L$ and a finite biquandle $X$, the number of vertices in the biquandle coloring quiver $\mathcal{Q}_X^{S}(L)$ is equal to the number of biquandle colorings $\Phi_X^{\mathbb{Z}}(L)$. Hence, $\mathcal{Q}_X^{S}(L)$ is an enhancement of $\Phi_X^{\mathbb{Z}}(L)$. It is shown in \cite{ceniceros2023psyquandle} that this enhancement is proper. 

\begin{example}
	\label{indeg}
	Let $L$ be the virtual knot diagram shown in Figure~\ref{lvkd} and $X=\mathbb{Z}_2[s]/\langle s^2+s+1\rangle$ be the Alexander biquandle of Example~\ref{alex}. Consider an $X$-coloring $f:\mathcal{B}(L)\rightarrow X$. From the relations on $\mathcal{B}(L)$, we obtain,
	\begin{align*}
		f(x)&=f(z\ovr w)=f(z)\ovr f(w)=(s+1)f(z),\\
		f(y)&=f(w\un z)=f(w)\un f(z)=sf(w)+f(z),\\
		f(y)&=f(x\un z)=f(x)\un f(z)=sf(x)+f(z),\\
		f(w)&=f(z\ovr x)=f(z)\ovr f(x)=(s+1)f(z).
	\end{align*}
	Solving this system of linear equations yields,
	\[f(x)=f(w)=(s+1)f(z),\quad f(y)=0.\] Hence, if we denote an $X$-coloring $f$ of $L$ by $[f(x),f(y),f(w),f(z)]$, then the set of $X$-colorings of $L$ is
	\[\text{Col}_X(L)=\{f_0=[0,0,0,0],\;f_1=[1,0,1,s],\;f_2=[s,0,s,s+1],\;f_3=[s+1,0,s+1,1]\}.\]
	The operations $\un$ and $\ovr$ on $X$ are linear. Thus, End$(X)$ consists of the maps \[\sigma_0(x)=0,\quad \sigma_1(x)=x,\quad \sigma_2(x)=sx,\quad \sigma_3(x)=(s+1)x,\]
	for all $x\in X$. Then, the  $X$-coloring quiver of $L$ is as shown in Figure~\ref{fullbcq}. The edges resulting from $\sigma_0$, $\sigma_1$, $\sigma_2$ and $\sigma_3$ are indicated by blue, black, orange and green colors, respectively.
\end{example}
\begin{figure}
	\centering
	\begin{overpic}[width=0.35\linewidth]{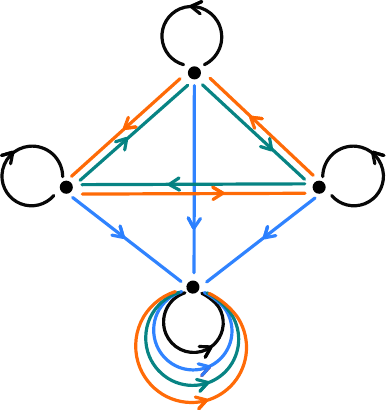}
		\put(45,21){\small$v_{f_0}$}
		\put(6,55.5){\small$v_{f_1}$}
		\put(83,55.5){\small$v_{f_2}$}
		\put(44,89.5){\small$v_{f_3}$}
	\end{overpic}
	\vspace{0.3 cm}
	\caption{An example of a biquandle coloring quiver.}
	\label{fullbcq}
\end{figure}
It is stated in \cite{cho2019quandle} that new invariants of links can be obtained by applying invariants of directed graphs to quandle coloring quivers, and this approach also extends to biquandle coloring quivers. As an immediate example, the \textit{in-degree $X$-coloring quiver polynomial} is introduced in the same paper using the following observation. Let $f$ be an $X$-coloring of a link diagram $L$. Consider a subset $S\subseteq \text{End}(X)$ and the corresponding $X$-coloring quiver $\mathcal{Q}_X^{S}(L)$. By definition, each endomorphism $\sigma\in S$ determines a directed edge $v_f\rightarrow v_{\sigma\circ f}$. It follows that the number of outgoing directed edges from each vertex $v$, denoted by deg$^-(v)$, is exactly $|S|$. However, the number of incoming directed edges to each vertex $v$, denoted by deg$^+(v)$, may vary.

\begin{definition}
	\cite{ceniceros2023psyquandle}
	Let $L$ be a link diagram, $X$ a finite biquandle, $S$ a subset of End$(X)$, and consider the corresponding $X$-coloring quiver $\mathcal{Q}_X^S(L)$. The \textit{in-degree $X$-coloring quiver polynomial} of $L$ is
	\[\Phi_X^{\text{deg}^+,S}(L)=\sum_{f\in \text{Col}_X(L)}u^{\text{deg}^+(v_f)},\]
	where $u$ is a formal variable.
\end{definition}
\begin{example}
	The in-degree $X$-coloring quiver polynomial of $L$ of Example~\ref{indeg} is
	\[\Phi_X^{\text{deg}^+,S}(L)=3u^3+u^7.\]
\end{example}

\subsection{Directed Clique Homology}
\label{homologies}

Directed clique complex is introduced in \cite{masulli2016topology} for finite directed graphs without loops and multi-edges. Its construction uses \textit{directed cliques} of a given directed graph considered as abstract simplices.

\begin{definition}
	An \textit{abstract ordered simplicial complex} $\mathcal{K}$ on a set $S$ is a collection of finite lists $(s_0,...,s_n)$, where $s_i\in S$ and $s_i\neq s_j$ for all $i\neq j$, such that
	\begin{itemize}
		\item for each $s\in S$, $(s)\in \mathcal{K}$,
		\item if $(s_0,...,s_n)\in \mathcal{K}$ and $(s_{i_0},...,s_{i_k})$ is a sublist of $(s_0,...s_n)$, then $(s_{i_0},...,s_{i_k})\in \mathcal{K}$.
	\end{itemize}
	An element $(s_0,...,s_n)\in \mathcal{K}$ is called an \textit{abstract $n$-simplex}, and the set of abstract $n$-simplices in $\mathcal{K}$ is denoted by $K_n$. Any sublist $(s_{i_0},...,s_{i_k})$ of $(s_0,...,s_n)$ is called a \textit{face} of the $n$-simplex $(s_0,...,s_n)$. A \textit{subcomplex} of $\mathcal{K}$ is a subcollection $\mathcal{L}\subseteq \mathcal{K}$ which is itself an abstract simplicial complex.
\end{definition}

Abstract $n$-simplices can be realized in $\mathbb{R}^{n+1}$ as geometric $n$-simplices. A \textit{geometric $n$-simplex} is the convex hull of the points $e_0,...,e_n\in \mathbb{R}^{n+1}$, where $e_i$ is the vector whose only nonzero entry is 1 in the $i$-th position. The \textit{geometric realization} of an abstract simplicial complex $\mathcal{K}$, denoted by $|\mathcal{K}|$, is the topological space obtained by replacing each abstract $n$-simplex in $\mathcal{K}$ by a geometric $n$-simplex and gluing the geometric $n$-simplices along their shared faces.

\begin{definition}
	\label{chain}
	Let $\mathcal{K}$ be an abstract ordered simplicial complex. For each $n\geq 0$, let $C_n(\mathcal{K})$ be the free abelian group generated by $K_n$, and define $\partial_n: C_n(\mathcal{K})\rightarrow C_{n-1}(\mathcal{K})$ on the generators by\[\partial_n(s_0,...,s_n)=\sum\limits_{i=0}^n(-1)^i(s_0,...,s_{i-1},s_{i+1},...,s_n)\]
	and extend linearly. Then,
	\begin{align*}
		\cdot\cdot\cdot \xrightarrow{\partial_{n+1}} C_n(\mathcal{K})\xrightarrow{\partial_{n}} C_{n-1}(\mathcal{K})\xrightarrow{\partial_{n-1}}\cdot\cdot\cdot\xrightarrow{\partial_2}C_1(\mathcal{K})\xrightarrow{\partial_1}C_0(\mathcal{K})\xrightarrow{}0
	\end{align*} satisfies the condition $\partial_{n+1}\circ \partial_n=0$, for all $n$ \cite{hatcher2002algebraic}. Hence, it is a chain complex called the \textit{simplicial chain complex} of $\mathcal{K}$, and denoted by $C_{\ast}(\mathcal{K})$.
\end{definition}

\begin{definition}
	\cite{hatcher2002algebraic}
	The \textit{$n$-th simplicial homology group} $H_n(\mathcal{K})$ of an abstract ordered simplicial complex $\mathcal{K}$ is defined as
	\[
	H_n(\mathcal{K})=\operatorname{ker}(\partial_n)/\operatorname{im}(\partial_{n+1}).
	\]
\end{definition}

\begin{definition}
	A \textit{directed graph} $G$ is a pair of finite sets $(V,E)$, where $V$ is called the \textit{vertex set} and $E\subseteq \{(u,v)\in V\times V\;|\; u\neq v\}$ is called the \textit{directed edge set} of $G$. A directed edge is denoted by the ordered pair $(u,v)$ or $u\rightarrow v$. A \textit{directed $(n+1)$-clique} in $G$ is a subset $W=\{v_0,...,v_n\}\subseteq V$ together with an ordering $(v_0,...,v_n)$ such that for all $i<j$, $v_i\rightarrow v_j\in E$.
\end{definition}
Given a directed graph $G=(V,E)$, one can construct an abstract ordered simplicial complex on $V$ using directed cliques of $G$ as follows.

\begin{definition}
	\label{directdef}
	Let $G=(V,E)$ be a finite directed graph. The \textit{directed clique complex} of $G$, denoted by $\mathcal{K}(G)$, is the abstract ordered simplicial complex defined by
	\begin{align*}
		K_0(G)&=\{(v)\;|\; v\in V\},\\
		K_n(G)&=\{(v_0,...,v_n)\;|\; v_i\rightarrow v_j\in E \text{ for all }i<j\} \quad\text{for } n>0.
	\end{align*}
\end{definition}
That is, the 0-simplex set $K_0(G)$ consists of the vertices of $G$, and for $n>0$, the $n$-simplex set $K_n(G)$ consists of directed $(n+1)$-cliques $(v_0,...,v_n)$ in $G$. Note that the ordering of an abstract $n$-simplex is determined by the directions of the edges in the corresponding directed $(n+1)$-clique. In Table~\ref{table1}, we illustrate examples of directed graphs on the leftmost column, the abstract ordered simplicial complexes associated with these graphs in the middle column, and the geometric realizations of these complexes on the rightmost column.

\begin{table}[h]
	\centering
	\setlength{\tabcolsep}{3pt}
	\renewcommand{\arraystretch}{0.8}
	
	\begin{tabular}{|>{\centering\arraybackslash}m{0.2\textwidth}|
			>{\centering\arraybackslash}m{0.45\textwidth}|
			>{\centering\arraybackslash}m{0.25\textwidth}|}
		\hline
		\vspace{0.1 cm}\text{\small Directed graph} & \vspace{0.1 cm}\text{\small Abstract ordered simplicial complex} & \vspace{0.1 cm}\text{\small Geometric realization}
		\tabularnewline
		\hline
		
		\begin{overpic}[width=0.8\linewidth]{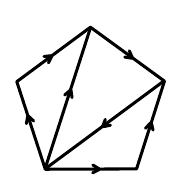}
			\put(9,5){$v_0$}
			\put(-5,55){$v_1$}
			\put(45,90){$v_2$}
			\put(90,55){$v_3$}
			\put(75,5){$v_4$}
		\end{overpic}
		&
		
		\scalebox{0.9}{$\begin{aligned}
				K_0 &= \{(v_0),(v_1),(v_2),(v_3),(v_4)\},\\
				K_1 &= \{(v_0,v_1),(v_0,v_2),(v_1,v_2),(v_2,v_3),\\
				&\qquad (v_3,v_0),(v_0,v_4),(v_4,v_3)\},\\
				K_2 &= \{(v_0,v_1,v_2)\}.
			\end{aligned}$}
		\vspace{0.1 cm}
		&
		\includegraphics[width=0.55\linewidth]{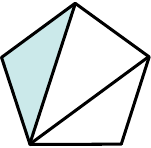}
		\tabularnewline
		\hline
		
		\begin{overpic}[width=0.8\linewidth]{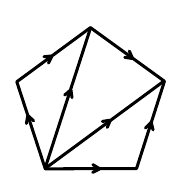}
			\put(9,5){$v_0$}
			\put(-5,55){$v_1$}
			\put(45,90){$v_2$}
			\put(90,55){$v_3$}
			\put(75,5){$v_4$}
		\end{overpic}
		&
		\scalebox{0.9}{$\begin{aligned}
				\small K_0 &= \{(v_0),(v_1),(v_2),(v_3),(v_4)\},\\
				K_1 &= \{(v_0,v_1),(v_0,v_2),(v_1,v_2),(v_2,v_3),\\&\qquad(v_0,v_3),(v_0,v_4),(v_4,v_3)\},\\
				K_2 &= \{(v_0,v_1,v_2), (v_0,v_2,v_3),(v_0,v_4,v_3)\}.
			\end{aligned}$}
		\vspace{0.1 cm}
		&
		\includegraphics[width=0.55\linewidth]{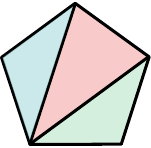}
		\tabularnewline
		\hline
		
		\begin{overpic}[width=0.8\linewidth]{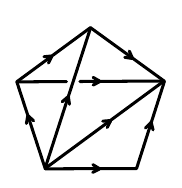}
			\put(9,5){$v_0$}
			\put(-5,55){$v_1$}
			\put(45,90){$v_2$}
			\put(90,55){$v_3$}
			\put(75,5){$v_4$}
		\end{overpic}
		&
		\vspace{0.1 cm}
		\scalebox{0.9}{$\begin{aligned}
				K_0 &= \{(v_0),(v_1),(v_2),(v_3),(v_4)\},\\
				K_1 &= \{(v_0,v_1),(v_0,v_2),(v_1,v_2),(v_1,v_3),\\&\qquad(v_2,v_3),(v_0,v_3),(v_0,v_4),(v_4,v_3)\},\\
				K_2 &= \{(v_0,v_1,v_2), (v_0,v_2,v_3),(v_0,v_4,v_3)\\&\qquad(v_1,v_2,v_3),(v_0,v_1,v_3)\},\\K_3&=\{(v_0,v_1,v_2,v_3)\}.
			\end{aligned}$}
		\vspace{0.1 cm}
		&
		\includegraphics[width=0.55\linewidth]{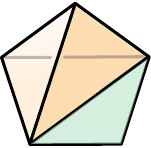}
		\tabularnewline
		\hline
	\end{tabular}
	\vspace{0.3 cm}
	\caption{Examples of directed graphs, the associated abstract ordered simplicial complexes, and their geometric realizations. Colored regions in a geometric realization indicate simplices determined by directed cliques in the corresponding graph.}
	\label{table1}
\end{table}
\begin{definition}
	Let $G=(V,E)$ be a finite directed graph without loops or multi-edges. The \textit{$n$-th directed clique homology group} of $G$, denoted by $H_n(G)$, is defined to be the $n$-th simplicial homology group of $\mathcal{K}(G)$. That is,
	\[H_n(G):= H_n(\mathcal{K}(G)).\]
\end{definition}

\begin{remark}
	In \cite{masulli2016topology}, the construction is carried out over the field $\mathbb{Z}_2$. That is, $C_n(\mathcal{K})$ in Definition~\ref{chain} is considered a vector space over $\mathbb{Z}_2$ with basis $K_n$ rather than a free abelian group generated by $K_n$. Moreover, the authors of \cite{masulli2016topology} are interested in the dimension of the homology group $H_n(\mathcal{K})$, which is \[\text{dim}(H_n(\mathcal{K}))=\text{dim}(\text{ker}(\partial_n))-\text{dim}(\text{im}(\partial_{n+1})),\] rather than the generators themselves. However, in this paper, the generators of $H_n(\mathcal{\mathcal{K}})$ themselves play a central role in the construction of invariants of links introduced in Section~\ref{mainsection1}.
\end{remark}

\subsection{Persistent Homology}
\label{persistenthom}

We begin with the notion of a filtration, which we will use in several settings throughout this paper.

\begin{definition}
	Let $\mathcal{C}$ be a class of mathematical objects equipped with a notion of inclusion. A \textit{filtration} in $\mathcal{C}$ is a nested sequence
	\[
	A_0 \subseteq A_1 \subseteq \cdots \subseteq A_m,
	\]
	where $A_i\in \mathcal{C}$ for all $i=0,...,m$.
	In particular, if $\mathcal{K}$ is an abstract ordered simplicial complex, a filtration of $\mathcal{K}$ is a nested sequence of its subcomplexes
	\[
	\mathcal{K}_0 \subseteq \mathcal{K}_1 \subseteq \cdots \subseteq \mathcal{K}_m = \mathcal{K}.
	\]
\end{definition}

\begin{definition}
	Two filtrations
	\[
	A_0 \subseteq A_1 \subseteq \cdots \subseteq A_m
	\quad \text{and} \quad
	B_0 \subseteq B_1 \subseteq \cdots \subseteq B_m
	\]
	are said to be \textit{isomorphic} if there exist isomorphisms
	\;$
	\varphi_i : A_i \to B_i
	$\;
	for each $i$ such that the following diagram commutes:
	\[
	\begin{array}{ccc}
		A_i\;\; & \hookrightarrow & A_{i+1}\;\; \\
		\downarrow \text{\footnotesize$\varphi_i$} &  & \downarrow \text{\footnotesize$\varphi_{i+1}$} \\
		B_i\;\; & \hookrightarrow & B_{i+1}\;\;
	\end{array}
	\]
	where inclusion and isomorphism are understood in the corresponding setting.
\end{definition}

In this paper, we apply this isomorphism notion to filtrations of quivers, abstract ordered simplicial complexes and their corresponding chain complexes.

Consider a filtration $\{\mathcal{K}_i\}_{i=0,...,m}$ of an abstract ordered simplicial complex $\mathcal{K}$. For each $i\leq j$, the inclusion $\mathcal{K}_i\hookrightarrow \mathcal{K}_{j}$ induces an inclusion $C_n(\mathcal{K}_i)\hookrightarrow C_n(\mathcal{K}_{j})$ for all $n\geq 0$, and hence induces an inclusion chain map $C_{\ast}(\mathcal{K}_i)\hookrightarrow C_{\ast}(\mathcal{K}_{j})$. This yields a filtration of the chain complex $C_{\ast}(\mathcal{K})$, that is, a nested sequence of chain subcomplexes 
\begin{align}
	C_{\ast}(\mathcal{K}_0)\subseteq C_{\ast}(\mathcal{K}_1)\subseteq...\subseteq C_{\ast}(\mathcal{K}_m)=C_{\ast}(\mathcal{K}).
	\label{chainfilt}
\end{align}
Applying the homology functor $H_n$ to Filtration \eqref{chainfilt}, we obtain 
\begin{align*}
	H_n(\mathcal{K}_0)\rightarrow H_n(\mathcal{K}_1) \rightarrow ... \rightarrow H_n(\mathcal{K}_m)=H_n(\mathcal{K}).
\end{align*}

\begin{definition}
	\cite{edelsbrunner2002topological}
	Let $\mathcal{K}_0\subseteq \mathcal{K}_1\subseteq ... \subseteq \mathcal{K}_m$ be a filtration of an abstract ordered simplicial complex $\mathcal{K}$. For $i\leq j$, the \textit{n-th persistent homology group} of $\mathcal{K}$ is defined by
	\begin{align*}
		H_{n}^{i,j}(\mathcal{K})=\text{im}(H_n(\mathcal{K}_i)\rightarrow H_n(\mathcal{K}_j)).
	\end{align*}
	In other words, for all $i\leq j$, the $n$-th persistent homology group consists of homology classes that were \textit{born} at or before the $i$-th stage and are still \textit{alive} in the $j$-th stage in the filtration.
\end{definition}

Persistence barcodes are used to keep track of homological features throughout a filtration. Each homology class has a birth index $i$. Some of these classes have a death index $j$, while others may persist throughout the filtration, in which case we set $j = \infty$. For each homology class, the birth and death indices together determine an interval $[i,j)$ called a \textit{persistence interval}. The multiset of persistence intervals over all homology classes is called the \textit{persistence barcode.} 
Persistence barcodes are usually visualized in the $xy$-plane where each persistence interval is represented by a horizontal bar, with indices on the $x$-axis.

A geometric simplicial complex, together with a choice of orientation for each of its simplices, naturally determines an abstract ordered simplicial complex. This abstract ordered simplicial complex is obtained by forming abstract ordered simplices using the $0$-simplices in the geometrix simplicial complex. The order on these abstract simplices is with respect to the orientation of the corresponding geometric simplex. In the geometric setting, simplicial homology has an intuitive interpretation: the 0-th homology group counts the number of connected components and for $n\geq 1$, the $n$-th homology group detects the number of $n$-dimensional holes in the simplicial complex. In Example~\ref{ex}, we omit the computations and utilize this interpretation.
\begin{figure}
	\centering
	\begin{overpic}[width=0.52\linewidth]{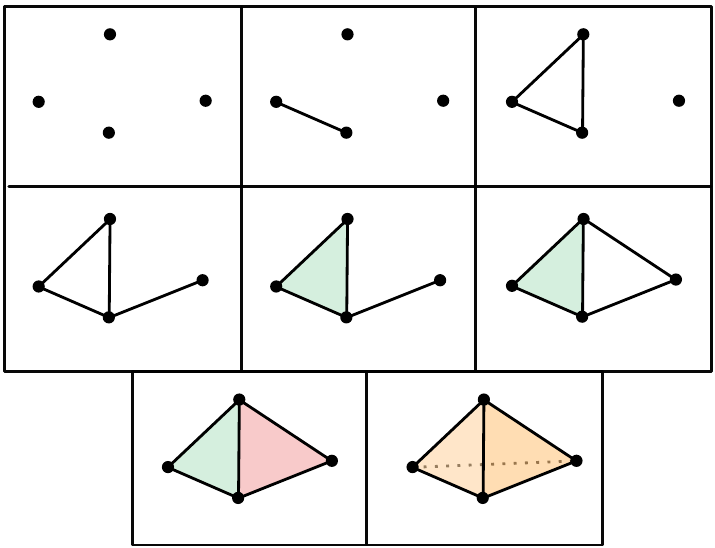}
		\put(3,53){\small$\mathcal{K}_0$}
		\put(36,53){\small$\mathcal{K}_1$}
		\put(69,53){\small$\mathcal{K}_2$}
		\put(3,27.5){\small$\mathcal{K}_3$}
		\put(36,27.5){\small$\mathcal{K}_4$}
		\put(69,27.5){\small$\mathcal{K}_5$}
		\put(21,3){\small$\mathcal{K}_6$}
		\put(53,3){\small$\mathcal{K}_7=\mathcal{K}$}
	\end{overpic}
	\vspace{0.3 cm}
	\caption{A filtration of a geometric simplicial complex.}
	\label{pcd}
\end{figure}

\begin{example}
	\label{ex}
	A filtration of a geometric simplicial complex $\mathcal{K}$ is given in Figure~\ref{pcd}. Observe that the number of connected components in $\mathcal{K}_0$ is four and it decreases at each step and finally, in $\mathcal{K}_3$, there is only one connected component. A 1-dimensional hole appears in $\mathcal{K}_2$ and persists through $\mathcal{K}_3$; however in $\mathcal{K}_4$, it is filled. Another 1-dimensional hole is born in $\mathcal{K}_5$, but it dies at $\mathcal{K}_6$. Under these observations, the persistence barcode corresponding to the given filtration of $\mathcal{K}$ is given by
	\[\{[0,\infty),[0,1),[0,2),[0,3),[2,4),[5,6)\},\]
	where a generator which is born at step $i$ and survives throughout the filtration is denoted by the interval $[i,\infty)$.
	The persistence barcode is visualized in Figure~\ref{barcode}.
	
\end{example}

\begin{figure}
	\centering
	\begin{overpic}[scale=0.7]{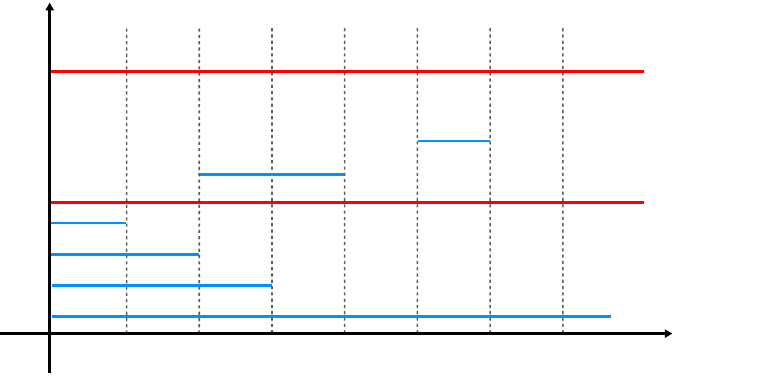}
		\put(0,14){\small $H_0$}
		\put(0,30){\small $H_1$}
		\put(4,3){\small $0$}
		\put(15.3,3){\small $1$}
		\put(25,3){\small $2$}
		\put(34,3){\small $3$}
		\put(43.3,3){\small $4$}
		\put(52.7,3){\small $5$}
		\put(62,3){\small $6$}
		\put(71.6,3){\small $7$}
	\end{overpic}
	\vspace{0.3 cm}
	\caption[Barcode]{A persistence barcode of a simplicial complex $\mathcal{K}$.}
	\label{barcode}
\end{figure}

Persistent homology depends on the choice of filtration and is therefore not an invariant of the underlying object in general. In many applications, particularly in topological data analysis, filtrations arise from geometric or data-driven constructions and are therefore sensitive to deformations of the underlying object, even when these deformations preserve its equivalence class. In such settings, the short persistence intervals are often considered as noise and ignored, while longer intervals are regarded as meaningful data revealing the topological features of the underlying object. We refer the interested reader to \cite{edelsbrunner2008persistent} for motivation, historical background, and further details on persistent homology.

\section{Persistent Homology of Biquandle Coloring Quivers}
\label{mainsection1}

\subsection{$\boldsymbol{N}$-directed Clique Homology}
\label{ndirectsec}
In this section, we generalize the directed clique homology of directed graphs to quivers. In \cite{masulli2016topology}, directed graphs are not allowed to have loops or multiple edges. However, in biquandle coloring quivers, loops and multiple edges arise naturally.

\begin{definition}
	\label{ndirectdef}
	Let $\mathcal{Q}$ be a finite quiver with vertex set $V$ and edge set $E$. Let $e_\mathcal{Q}(v_i,v_j)$ denote the number of directed edges from $v_i$ to $v_j$ in $\mathcal{Q}$, and let $N\geq 1$ be a fixed integer. The \textit{$N$-directed clique complex} $\mathcal{K}^{(N)}(\mathcal{Q})$ of $\mathcal{Q}$ is the abstract ordered simplicial complex defined as
	\begin{align*}
		K_0^{(N)}(\mathcal{Q})&=\{(v)\;|\; v\in V\},\notag\\
		K_n^{(N)}(\mathcal{Q})&=\{(v_0,...,v_n)\;|\; v_i\neq v_j \text{ and } e_\mathcal{Q}(v_i,v_j)\geq N, \text{ for all }i<j \}, \text{ for }n>0.
	\end{align*} 
\end{definition}

Observe that the 1-directed clique complex coincides with the directed clique complex of \cite{masulli2016topology} given in Section~\ref{homologies}. The existence of loops in quivers may result in degenerate lists in which vertices are repeated. We eliminate such lists by the condition $v_i\neq v_j$ for all $i\neq j$. Moreover, we modify the connectivity condition in the definition of a clique: instead of requiring exactly one directed edge, we require at least $N$ directed edges between any two ordered distinct vertices in the subgraph. Our motivation for such a modification is that we would like the existence of multi-edges in biquandle coloring quivers to play a role in the construction of invariants introduced in this paper. 

\begin{remark}
	\label{justremark}
	One could also generalize the approach in \cite{masulli2016topology} by setting $e_G(v_i,v_j)=N$ in Definition~\ref{ndirectdef}. It is clear that this choice yields fewer simplices in the associated $N$-directed clique complex compared to our choice $e_G(v_i,v_j)\geq N$ in general. The reason for our choice is justified in Section~\ref{persistsec}.
\end{remark}

\begin{definition}
	\label{homdef1}
	The \textit{$n$-th $N$-directed clique homology group} of a quiver $\mathcal{Q}$ is defined by
	\[H_n^{(N)}(\mathcal{Q}):=H_n(\mathcal{K}^{(N)}(\mathcal{Q})).\]
\end{definition}

\begin{definition}
	Two quivers $\mathcal{Q}_1=(V_1,E_1)$ and $\mathcal{Q}_2=(V_2,E_2)$ are \textit{isomorphic}, if there is a bijection $\,\psi:V_1\rightarrow V_2\,$
	such that $\,e_{\mathcal{Q}_1}(u,v)=e_{\mathcal{Q}_2}(\psi(u),\psi(v))\,$ for all $\,u,v\in V_1$.
\end{definition}

\begin{proposition}
	\label{quivinv2}
	Let $\mathcal{Q}_1$ and $\mathcal{Q}_2$ be two finite quivers. If $\mathcal{Q}_1$ and $\mathcal{Q}_2$ are isomorphic, then their $N$-directed clique complexes $\mathcal{K}^{(N)}(\mathcal{Q}_1)$ and $\mathcal{K}^{(N)}(\mathcal{Q}_2)$ are isomorphic as abstract simplicial complexes. As a consequence, 
	\[H_n^{(N)}(\mathcal{Q}_1)\cong H_n^{(N)}(\mathcal{Q}_2)\]
	for all $n\geq 0$ and $N\geq 1$.
\end{proposition}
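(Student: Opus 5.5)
The plan is to let the vertex bijection $\psi:V_1\rightarrow V_2$ coming from the quiver isomorphism act on ordered lists. We define $\Psi:\mathcal{K}^{(N)}(\mathcal{Q}_1)\rightarrow\mathcal{K}^{(N)}(\mathcal{Q}_2)$ by
\[\Psi(v_0,\dots,v_n)=(\psi(v_0),\dots,\psi(v_n)).\]
We then show that $\Psi$ is well defined, bijective on simplices in each dimension, and compatible with taking faces. From this we deduce an isomorphism of simplicial chain complexes, and hence of homology.

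First, $\Psi$ sends simplices to simplices. For $n=0$ this is immediate, since $\psi$ is a bijection on vertices. For $n>0$, take $(v_0,\dots,v_n)\in K_n^{(N)}(\mathcal{Q}_1)$. Injectivity of $\psi$ gives $\psi(v_i)\neq\psi(v_j)$ whenever $i\neq j$. The defining property of a quiver isomorphism gives
\[e_{\mathcal{Q}_2}(\psi(v_i),\psi(v_j))=e_{\mathcal{Q}_1}(v_i,v_j)\geq N\quad\text{for all } i<j,\]
so the image list lies in $K_n^{(N)}(\mathcal{Q}_2)$. The inverse bijection $\psi^{-1}$ satisfies the same edge-count identity, read in the other direction, so the same argument shows that $\psi^{-1}$ induces a map $\mathcal{K}^{(N)}(\mathcal{Q}_2)\rightarrow\mathcal{K}^{(N)}(\mathcal{Q}_1)$ that is inverse to $\Psi$. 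Thus $\Psi$ restricts to a bijection $K_n^{(N)}(\mathcal{Q}_1)\rightarrow K_n^{(N)}(\mathcal{Q}_2)$ for each $n$. Since $\Psi$ is applied entrywise, it commutes with passing to sublists, so it preserves faces and the ordering of entries. Hence it is an isomorphism of abstract ordered simplicial complexes.

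For the consequence, extend $\Psi$ linearly to homomorphisms $\Psi_n:C_n(\mathcal{K}^{(N)}(\mathcal{Q}_1))\rightarrow C_n(\mathcal{K}^{(N)}(\mathcal{Q}_2))$. Each $\Psi_n$ is an isomorphism of free abelian groups, because it maps a basis bijectively onto a basis. Deleting the $i$-th entry commutes with applying $\psi$ entrywise, and the sign $(-1)^i$ is unchanged, so comparing the formula in Definition~\ref{chain} term by term gives $\partial_n\circ\Psi_n=\Psi_{n-1}\circ\partial_n$. Therefore $\Psi_\ast$ is an isomorphism of chain complexes, with inverse chain map induced by $\psi^{-1}$. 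By functoriality it induces isomorphisms $H_n^{(N)}(\mathcal{Q}_1)\cong H_n^{(N)}(\mathcal{Q}_2)$ for all $n\geq0$ and $N\geq1$, using Definition~\ref{homdef1}.

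No step presents a real obstacle. The only point needing care is that the isomorphism must preserve the order of the lists, and not merely the underlying vertex sets. This holds because edge multiplicities are preserved for ordered pairs $(u,v)$, so directionality is respected.
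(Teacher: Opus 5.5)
Your proposal is correct and follows the same route as the paper: the vertex bijection $\psi$ induces a dimension-wise bijection $K_n^{(N)}(\mathcal{Q}_1)\to K_n^{(N)}(\mathcal{Q}_2)$ of ordered simplices, hence a chain isomorphism compatible with the boundary maps and isomorphisms on homology. You spell out several details the paper leaves implicit: distinctness of entries via injectivity of $\psi$, the inverse map via $\psi^{-1}$, and the term-by-term check $\partial_n\circ\Psi_n=\Psi_{n-1}\circ\partial_n$.
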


\begin{proof}
	Since $\mathcal{Q}_1$ and $\mathcal{Q}_2$ are isomorphic, there is a bijection $\psi:V_1\rightarrow V_2$ where $V_i$ is the vertex set of $\mathcal{Q}_i$, for $i=1,2$. It is clear that $(v)\in K_0^{(N)}(\mathcal{Q}_1)$ if and only if $\psi(v)\in K_0^{(N)}(\mathcal{Q}_2)$. For $n\geq 1$, since $\psi$ preserves the number of directed edges between any pair of vertices, we have that $(v_0,...,v_n)\in K_n^{(N)}(\mathcal{Q}_1)$ if and only if $(\psi(v_0),...,\psi(v_n))\in K_n^{(N)}(\mathcal{Q}_2)$. Hence, the bijection $\psi:V_1\rightarrow V_2$ induces a bijection between the set of abstract $n$-simplices $K_n^{(N)}(\mathcal{Q}_1)$ and $K_n^{(N)}(\mathcal{Q}_2)$ for all $n$. Therefore, the induced maps on chain complexes of $\mathcal{K}^{(N)}(\mathcal{Q}_1)$ and $\mathcal{K}^{(N)}(\mathcal{Q}_2)$ are isomorphisms compatible with the boundary maps, and hence induce isomorphisms between homology groups.
\end{proof}

\begin{remark}
	\label{func}
	The construction $\mathcal Q \mapsto \mathcal K^{(N)}(\mathcal Q)$ admits a functorial
	interpretation in a categorical setting of quivers with suitable morphisms, from which the isomorphisms of $\mathcal K^{(N)}(\mathcal Q_1)$ and $\mathcal K^{(N)}(\mathcal Q_2)$ in the proof of Theorem~\ref{quivinv2} could also be obtained.
	Consequently, by functoriality of simplicial homology, the assignment
	\[
	\mathcal Q \mapsto H_n^{(N)}(\mathcal Q)
	:= H_n(\mathcal K^{(N)}(\mathcal Q))
	\]
	is functorial on quiver isomorphisms. This observation will be used later when applying
	$H_n^{(N)}$ to isomorphisms of quiver filtrations.
\end{remark}

\begin{definition}
	\label{homdef}
	Let $L$ be a link diagram, $X$ be a finite biquandle, $S\subseteq$ End$(X)$, and $N\geq 1$. The \textit{$n$-th $N$-directed clique homology group} of $L$ with respect to $X$ and $S$ is defined by
	\[
	H_n^{(N)}(L;X,S):=H_n^{(N)}(\mathcal{Q}_X^S(L)).
	\]
\end{definition}

\begin{theorem}
	Let $X$ be a finite biquandle, $S\subseteq$ End$(X)$. Then, for all $n\geq 0$ and $N\geq1$, the $n$-th $N$-directed clique homology group $H_n^{(N)}(L;X,S)$ is an invariant of links.
\end{theorem}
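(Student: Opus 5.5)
The plan is to compose two earlier results: the invariance of the coloring quiver (Theorem~\ref{quivinv}) and the isomorphism invariance of $N$-directed clique homology (Proposition~\ref{quivinv2}). Let $L$ and $L'$ be two link diagrams representing the same link. By induction on the length of a sequence of generalized Reidemeister moves connecting them, it suffices to treat the case where $L'$ is obtained from $L$ by a single move. The invariance statement is then
\[H_n^{(N)}(L;X,S)\cong H_n^{(N)}(L';X,S)\]
for all $n\geq 0$ and $N\geq 1$.

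First I would make explicit what Theorem~\ref{quivinv} gives. By Remark~\ref{colorinv}, there is a bijection $\phi:\text{Col}_X(L)\to\text{Col}_X(L')$. Colorings agree on all semi-arcs away from the disk where the move takes place. Inside the disk, the coloring of $L'$ is the unique extension determined by the boundary colors, which are the same on both sides.

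The key observation is that $\phi$ commutes with post-composition by any $\sigma\in S$, so that
\[\phi(\sigma\circ f)=\sigma\circ\phi(f).\]
To see this, note that $\sigma\circ\phi(f)$ is an $X$-coloring of $L'$. Outside the disk it agrees with $\sigma\circ f$. Hence, by uniqueness of the extension across the move, it equals $\phi(\sigma\circ f)$.

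It follows that the number of $\sigma\in S$ with $\sigma\circ f=g$ equals the number of $\sigma\in S$ with $\sigma\circ\phi(f)=\phi(g)$. In other words,
\[e_{\mathcal{Q}_X^S(L)}(v_f,v_g)=e_{\mathcal{Q}_X^S(L')}(v_{\phi(f)},v_{\phi(g)}),\]
which is exactly a quiver isomorphism in the sense defined above. This edge-multiplicity count is the point I would check most carefully. Theorem~\ref{quivinv} is stated as invariance of the quiver, and the argument above confirms that it holds in the multiplicity-preserving sense that the $N$-threshold of Definition~\ref{ndirectdef} requires.

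Once the quiver isomorphism $\mathcal{Q}_X^S(L)\cong\mathcal{Q}_X^S(L')$ is established, Proposition~\ref{quivinv2} gives an isomorphism of the $N$-directed clique complexes. It therefore yields $H_n^{(N)}(\mathcal{Q}_X^S(L))\cong H_n^{(N)}(\mathcal{Q}_X^S(L'))$ for every $n\geq 0$ and $N\geq 1$. By Definition~\ref{homdef}, this is precisely the claimed invariance.

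The only genuine obstacle is the compatibility of $\phi$ with endomorphisms, including its effect on multi-edges. This reduces to the locality and uniqueness of colorings under a single move, which is already contained in Remark~\ref{colorinv}.
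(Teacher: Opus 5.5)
Your proposal is correct and follows the paper's proof: you compose the invariance of the coloring quiver (Theorem~\ref{quivinv}) with Proposition~\ref{quivinv2} and then unwind Definition~\ref{homdef}. The only difference is that you re-derive the multiplicity-preserving form of Theorem~\ref{quivinv} from the identity $\phi(\sigma\circ f)=\sigma\circ\phi(f)$, whereas the paper takes this directly from the cited result.
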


\begin{proof}
	Let $L_1$ and $L_2$ be two virtually equivalent link diagrams. By Theorem~\ref{quivinv}, the $X$-coloring quivers $\mathcal{Q}_X^{S}(L_1)$ and $\mathcal{Q}_X^{S}(L_2)$ are isomorphic. By Proposition~\ref{quivinv2},
	\[H_n^{(N)}(\mathcal{Q}_X^S(L_1))\cong H_n^{(N)}(\mathcal{Q}_X^S(L_2)).\] Hence, by definition,
	\[
	H_n^{(N)}(L_1;X,S)\cong H_n^{(N)}(L_2;X,S).
	\]
\end{proof}
\begin{example}
	Consider the virtual knot diagram $L$ given in Figure \ref{lvkd} and the biquandle $\mathbb{Z}_3$ with the operation matrix 
	\[\left[\begin{array}{c c c | c c c}
		1& 1& 1& 1& 1& 1\\
		3& 2& 2& 3& 2& 2\\
		2& 3& 3& 2& 3& 3
	\end{array}\right].\]
	Considering the relations in Figure~\ref{crossingrels} together with the operation matrix of $\mathbb{Z}_3$, one observes that $L$ admits only trivial $\mathbb{Z}_3$-colorings. That is, $f:\mathcal{B}(L)\rightarrow \mathbb{Z}_3$ is a $\mathbb{Z}_3$-coloring of $L$ if and only if $f$ is a constant map. Hence, $L$ has three $\mathbb{Z}_3$-colorings given by
	\[\text{Col}_{\mathbb{Z}_3}(L)=\{f_0
	=[1,1,1,1],f_1=[2,2,2,2],f_2=[3,3,3,3]\}.\]
	A direct computation shows that the endomorphism set of $\mathbb{Z}_3$ with the given operation matrix is,
	\[\text{End}(\mathbb{Z}_3)=\{[1,1,1],[1,2,3],[1,3,2],[2,2,2],[2,3,3],[3,2,2],[3,3,3]\}.\]
	We set $S=\text{End}(X)$ and obtain the full $\mathbb{Z}_3$-coloring quiver of $L$ depicted in Figure \ref{exN}, where an arrow $v_{f_i}\rightarrow v_{f_j}$ labeled by $n$ indicates that $e(v_{f_i},v_{f_j})=n$.
	
	\begin{figure}
		\centering
		\begin{overpic}[scale=0.7]{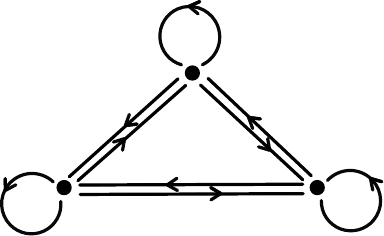}
			\put(46,50){\small $v_{f_0}$}
			\put(5,7.5){\small $v_{f_1}$}
			\put(87,8){\small $v_{f_2}$}
			
			\put(38,23){\small $1$}
			\put(70,31){\small $1$}
			\put(48,16){\small $3$}
			\put(28,31){\small $2$}
			\put(58,23){\small $2$}
			\put(48,4){\small $3$}
			\put(7,19){\small $3$}
			\put(89,20){\small $3$}
			\put(59,51){\small $3$}
		\end{overpic}
		\vspace{0.3 cm}
		\caption[Barcode]{An example of a biquandle coloring quiver.}
		\label{exN}
	\end{figure}
	
	The $N$-directed clique complex $\mathcal{K}^{(N)}(\mathcal{Q}_{\mathbb{Z}_3}(L))$ and the $n$-th $N$-directed clique homology group $H_n^{(N)}(L;\mathbb{Z}_3,S)\;$ of $\;L\;$ for $\;n=0,1\;$ and $\;N=1,2,3,4\;$ are given in Table~\ref{table2}. Observe that $\mathcal{K}^{(4)}(\mathcal{Q}_{\mathbb{Z}_3}(L))=\mathcal{K}^{(N)}(\mathcal{Q}_{\mathbb{Z}_3}(L))$ and hence $H_n^{(4)}(L;\mathbb{Z}_3,S)\cong H_n^{(N)}(L;\mathbb{Z}_3,S)$ for all $n\geq 0$ and for $N\geq 4$.
	\begin{table}[h]
		\centering
		\setlength{\tabcolsep}{3pt}
		\renewcommand{\arraystretch}{0.8}
		
		\begin{tabular}{|>{\centering\arraybackslash}m{0.1\textwidth}|
				>{\centering\arraybackslash}m{0.62\textwidth}|
				>{\centering\arraybackslash}m{0.2\textwidth}|}
			\hline
			& \vspace{0.1 cm}$\mathcal{K}^{(N)}(\mathcal{Q}_{\mathbb{Z}_3}(L))$ \vspace{0.05 cm}& \vspace{0.1 cm}$H_n^{(N)}(L;\mathbb{Z}_3,S)$\vspace{0.05 cm}
			\tabularnewline
			\hline
			
			$N=1$
			&
			\begin{minipage}{\linewidth}
				\centering
				\vspace{0.2 cm}
				\scalebox{0.9}{$\begin{aligned}
						K_0 &= \{(v_{f_0}),(v_{f_1}),(v_{f_2})\},\\
						K_1 &= \{(v_{f_0},v_{f_1}),(v_{f_1},v_{f_0}),(v_{f_0},v_{f_2}),(v_{f_2},v_{f_0}),(v_{f_1},v_{f_2}),(v_{f_2},v_{f_1})\},\\\
						K_2 &= \{(v_{f_0},v_{f_1},v_{f_2}), (v_{f_0},v_{f_2},v_{f_1}),(v_{f_1},v_{f_0},v_{f_2})\\&\qquad(v_{f_1},v_{f_2},v_{f_0}),(v_{f_2},v_{f_0},v_{f_1}),(v_{f_2},v_{f_1},v_{f_0})\}.
					\end{aligned}$}
				\vspace{0.2 cm}
			\end{minipage}
			&
			\scalebox{0.9}{$\begin{aligned}
					H_0 &\cong \mathbb{Z},\\
					H_1 & \cong \{0\},\\
					H_2 & \cong \mathbb{Z}^2,\\
					H_n & \cong \{0\},\; n\geq 3.
				\end{aligned}$}
			\tabularnewline
			\hline
			
			$N=2$
			&
			\vspace{0.2 cm}
			\scalebox{0.9}{$\begin{aligned}
					K_0 &= \{(v_{f_0}),(v_{f_1}),(v_{f_2})\},\\
					K_1 &= \{(v_{f_0},v_{f_1}),(v_{f_0},v_{f_2}),(v_{f_1},v_{f_2}),(v_{f_2},v_{f_1})\},\\
					K_2 &= \{(v_{f_0},v_{f_1},v_{f_2}), (v_{f_0},v_{f_2},v_{f_1})\}.
				\end{aligned}$}
			\vspace{0.2 cm}
			&
			\scalebox{0.9}{$\begin{aligned}
					H_0 &\cong \mathbb{Z},\\
					H_n & \cong \{0\},\; n\geq 1.
				\end{aligned}$}
			\tabularnewline
			\hline
			
			$N=3$
			&
			\vspace{0.2 cm}
			\scalebox{0.9}{$\begin{aligned}
					K_0 &= \{(v_{f_0}),(v_{f_1}),(v_{f_2})\},\\
					K_1 &= \{(v_{f_1},v_{f_2}),(v_{f_2},v_{f_1})\}.
				\end{aligned}$}
			\vspace{0.2 cm}
			&
			\vspace{-0.3 cm}
			\scalebox{0.9}{$\begin{aligned}
					H_0 &\cong \mathbb{Z}^2,\\
					H_1 &\cong \mathbb{Z},\\
					H_n & \cong \{0\},\; n\geq 2.
				\end{aligned}$}
			\tabularnewline
			\hline
			$N=4$
			&
			\vspace{0.2 cm}
			\scalebox{0.9}{$\begin{aligned}
					K_0 &= \{(v_{f_0}),(v_{f_1}),(v_{f_2})\}.
				\end{aligned}$}\vspace{0.2 cm}
			&
			\scalebox{0.9}{$\begin{aligned}
					H_0 &\cong \mathbb{Z}^3,\\
					H_n & \cong \{0\},\; n\geq 1.
				\end{aligned}$}
			\tabularnewline
			\hline
		\end{tabular}
		\vspace{0.3 cm}
		\caption{Examples of $N$-directed clique complexes and their $n$-th homology groups. $K_n=\emptyset$ whenever it does not appear in the middle column.}
		\label{table2}
	\end{table}
	
	The homology groups listed in Table~\ref{table2} can be computed using linear algebra. We omit these computations in this example, but mention the geometric interpretation of the homology groups. Observe that the geometric realization of the abstract $2$-simplices $(v_{f_i},v_{f_j},v_{f_k})$ and $(v_{f_i},v_{f_k},v_{f_j})$ is a disk whose boundary is formed by the edges $(v_{f_j},v_{f_k})$ and $(v_{f_k},v_{f_j})$. In the case $N=1$, there are three such pairs of $2$-simplices in $\mathcal{K}^{(1)}(\mathcal{Q}_{\mathbb{Z}_3}(L))$. Consequently, $\mathcal{K}^{(1)}(\mathcal{Q}_{\mathbb{Z}_3}(L))$ can be realized as three disks identified along their boundaries. This space has a single connected component, no 1-dimensional holes, and two independent 2-dimensional holes. Accordingly, 
	
	\[H_0^{(1)}(L;\mathbb{Z}_3,S)\cong \mathbb{Z},\quad H_1^{(1)}(L;\mathbb{Z}_3,S)\cong \{0\},\quad H_2^{(1)}(L;\mathbb{Z}_3,S)\cong \mathbb{Z}^2,\]
	as it is listed in Table~\ref{table2}. In the case $N=2$,\; $\mathcal{K}^{(2)}(\mathcal{Q}_{\mathbb{Z}_3}(L))$ can be realized as a disk, whose homology groups agree with those in Table~\ref{table2}. In the case $N=3$,\; $\mathcal{K}^{(3)}(\mathcal{Q}_{\mathbb{Z}_3}(L))$ can be realized as two connected components: a point corresponding to $v_{f_0}$, and a 1-dimensional hole formed by the edges  corresponding to $(v_{f_1},v_{f_2})$ and $(v_{f_2},v_{f_1})$. Hence, \[
	H_0^{(3)}(L;\mathbb{Z}_3,S)\cong \mathbb{Z}^2,\quad H_1^{(3)}(L;\mathbb{Z}_3,S)\cong \mathbb{Z}.\] Finally, in the case $N\geq 4$,\; $\mathcal{K}^{(N)}(\mathcal{Q}_{\mathbb{Z}_3}(L))$ consists of three distinct points. Hence, the only nontrivial homology group is $H_0^{(N)}(L;\mathbb{Z}_3,S)\cong \mathbb{Z}^3$.
	
\end{example}

\begin{figure}
	\centering
	\includegraphics[width=0.55\linewidth]{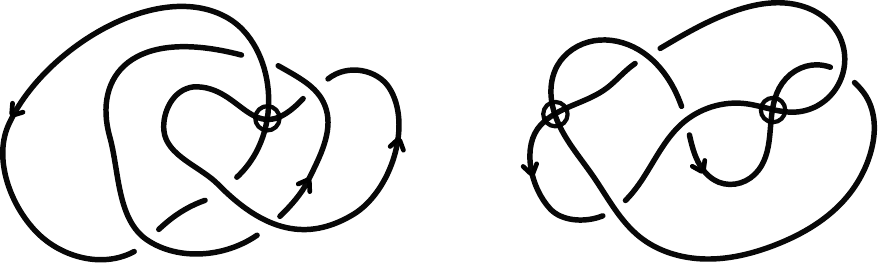}
	\vspace{0.3 cm}
	\caption{Two links $L_1$ (left-hand side) and $L_2$ (right-hand side) related to Example~\ref{ex1}}
	\label{vlinks}
\end{figure}

\begin{example}
	\label{ex1}
	Consider the link diagrams $L_1$ and $L_2$ in Figure~\ref{vlinks}, and the biquandle $\mathbb{Z}_4$ with the operation matrix
	\[\left[\begin{array}{c c c c | c c c c}
		1& 3& 1& 3& 1& 1& 1& 1\\
		4& 2& 4& 2& 2& 2& 2& 2\\
		3& 1& 3& 1& 3& 3& 3& 3\\
		2& 4& 2& 4& 4& 4& 4& 4
	\end{array}\right].\]
	Let $S=\{[3,2,1,4],[1,1,1,1],[3,1,3,1],[1,4,3,2],[3,4,1,2],[1,3,1,3],[3,3,3,3]\}\subset \text{End}(\mathbb{Z}_4)$ and set $N=2$. The number of $\mathbb{Z}_4$-colorings of $L_1$ and $L_2$ are equal: $\Phi_X^\mathbb{Z}(L_1)=16=\Phi_X^\mathbb{Z}(L_2)$. However, the $2$-directed clique homology groups of $L_1$ and $L_2$ with respect to $\mathbb{Z}_4$ and $S$ are
	\begin{align*}
		&H_0^{(2)}(L_1;\mathbb{Z}_4,S)\cong \mathbb{Z},&&H_0^{(2)}(L_2;\mathbb{Z}_4,S)\cong \mathbb{Z}^9,\\
		&H_1^{(2)}(L_1;\mathbb{Z}_4,S)\cong \{0\},&&H_1^{(2)}(L_2;\mathbb{Z}_4,S)\cong \{0\},\\
		&H_2^{(2)}(L_1;\mathbb{Z}_4,S)\cong \mathbb{Z}^6,&&H_2^{(2)}(L_2;\mathbb{Z}_4,S)\cong \mathbb{Z}^2,\\
		&H_3^{(2)}(L_1;\mathbb{Z}_4,S)\cong \mathbb{Z}^7,&&H_3^{(2)}(L_2;\mathbb{Z}_4,S)\cong \mathbb{Z}^3,
	\end{align*}
	and $H_n^{(2)}(L_i;\mathbb{Z}_4,S)\cong \{0\}$ for $i=1,2$ and $n\geq 4$. Although $L_1$ and $L_2$ have the same number of $\mathbb{Z}_4$-colorings, the 2-directed clique homology groups differ significantly. In particular, the ranks of $H_0^{(2)}$, $H_2^{(2)}$, and $H_3^{(2)}$ distinguish $L_1$ and $L_2$.
\end{example}

\subsection{Filtrations of Biquandle Coloring Quivers}
\label{filters}

In this section, we construct a particular type of filtrations of biquandle coloring quivers. Let $X$ be a finite biquandle and $L$ be a link diagram. We consider a nested sequence of subsets $\{S_i\}_{i=0,...,m}$ of End$(X)$ in the form
\[S_0\subseteq S_1\subseteq ... \subseteq S_m.\]
This sequence forms a filtration of the endomorphism set of the biquandle $X$. We denote this filtration by $S_\ast$. Each subset $S_i\subset \text{End}(X)$ corresponds to a unique $X$-coloring quiver $\mathcal{Q}_X^{S_i}(L)$ of $L$ up to quiver isomorphism for $i=0,...,m$. Notice that for any two choices $S_i$ and $S_j$ such that $S_i\subseteq S_j$, the $X$-coloring quiver $\mathcal{Q}_X^{S_i}(L)$ is naturally a subquiver of the $X$-coloring quiver $\mathcal{Q}_X^{S_j}(L)$ with the same vertex set. We use the notation $\mathcal{Q}_X^{S_i}(L)\subseteq \mathcal{Q}_X^{S_j}(L)$ for this relation. 

\begin{definition}
	Let $L$ be a link diagram, $X$ a finite biquandle, and consider a filtration $S_\ast=\{S_i\}_{i=0,...,m}$ of End$(X)$. The \textit{$X$-coloring quiver filtration} of $L$, denoted by $\mathcal{Q}_X^{S_\ast}(L)$, is defined as the filtration 
	\[\mathcal{Q}_X^{S_0}(L)\subseteq \mathcal{Q}_X^{S_1}(L)\subseteq...\subseteq \mathcal{Q}_X^{S_m}(L)\]
	of the full $X$-coloring quiver $\mathcal{Q}_X(L)$.
\end{definition}

\begin{remark}
	In many classical settings, a filtration of an object is assumed to terminate at the ambient object. In our setting, biquandle coloring quivers are used to distinguish links. This can be achieved using filtrations that do not necessarily reach the full biquandle coloring quiver. Allowing such filtrations provides flexibility and can be advantageous computationally in many cases. For instance, it is not an easy task to determine the full endomorphism set for large biquandles, and the number of edges in the full biquandle coloring quiver can be quite large when the endomorphism set is large. Hence, we do not require our filtrations to terminate at the full biquandle coloring quiver. That is, we do not require $S_m=\text{End}(X)$ in a filtration $S_\ast=\{S_i\}_{i=0,...,m}$ of End$(X)$.
\end{remark}

\begin{example}
	\label{filtex}
	Let $X$ be the Alexander biquandle $Z_5$ with $t=2$ and $r=4$.
	It can be verified that the endomorphism set of $\mathbb{Z}_5$ with the given biquandle structure is
	\begin{align*}
		\text{End}(\mathbb{Z}_5)=\{&\sigma_1=[1,2,3,4,5],
		\sigma_2=[2,4,1,3,5],
		\sigma_3=[3,1,4,2,5],\\
		&\sigma_4=[4,3,2,1,5],
		\sigma_5=[5,5,5,5,5]\}.
	\end{align*}
	
	\noindent Consider the filtration
	\[S_\ast:\quad S_0\subseteq S_1\subseteq ... \subseteq S_5\]
	of End$(\mathbb{Z}_5)$
	where $S_0=\emptyset$ and $S_i=\{\sigma_1,...,\sigma_i\}$ for $i=1,...,5$, and the figure-8 knot $L$. The set of $\mathbb{Z}_5$-colorings of $L$ is
	\begin{align*}
		\text{Col}_{\mathbb{Z}_5}(L)=\{&f_0=[5,5,5,5,5,5,5,5],
		f_1=[1,4,1,4,1,4,1,4],
		f_2=[2,3,2,3,2,3,2,3],\\
		&f_3=[3,2,3,2,3,2,3,2],
		f_4=[4,1,4,1,4,1,4,1]\}
	\end{align*}
	as also depicted in Figure \ref{fig8}.
	\begin{figure}
		\centering
		\begin{overpic}[scale=0.6]{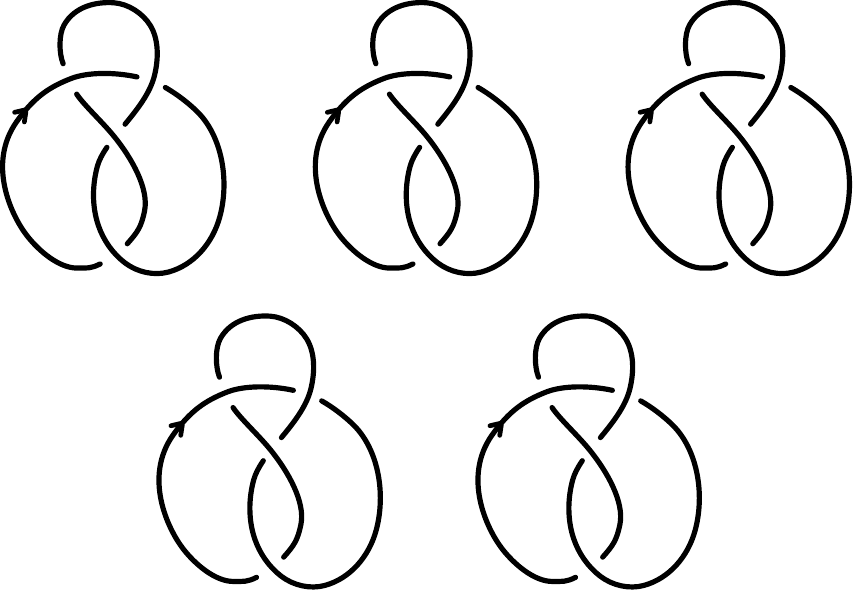}
			\put(2,48){\small $5$}
			\put(12,65){\small $5$}
			\put(8,53){\small $5$}
			\put(17,53){\small $5$}
			\put(12,57){\small $5$}
			\put(7.5,43){\small $5$}
			\put(18.5,43){\small $5$}
			\put(22,49){\small $5$}
			
			\put(38.5,48){\small $4$}
			\put(48.5,57){\small $1$}
			\put(59,49){\small $4$}
			\put(44,43){\small $1$}
			\put(53.5,53){\small $4$}
			\put(48.5,65){\small $1$}
			\put(44.5,53){\small $4$}
			\put(55,43){\small $1$}
			
			\put(75,48){\small $3$}
			\put(85.5,57){\small $2$}
			\put(95.5,49){\small $3$}
			\put(81,43){\small $2$}
			\put(90,53){\small $3$}
			\put(85,65){\small $2$}
			\put(81,53){\small $3$}
			\put(92,43){\small $2$}
			
			\put(20.5,11){\small $2$}
			\put(30.5,20.5){\small $3$}
			\put(41,12){\small $2$}
			\put(26.5,6){\small $3$}
			\put(35.5,16){\small $2$}
			\put(30.5,28){\small $3$}
			\put(26.5,16){\small $2$}
			\put(37,6){\small $3$}
			
			\put(58,11){\small $1$}
			\put(67.5,20.5){\small $4$}
			\put(78,12){\small $1$}
			\put(63.5,6){\small $4$}
			\put(72.5,16){\small $1$}
			\put(67.5,28){\small $4$}
			\put(63.5,16){\small $1$}
			\put(74,6){\small $4$}
		\end{overpic}
		\vspace{0.3 cm}
		\caption{$Z_5$-colorings of the figure-8 knot.}
		\label{fig8}
	\end{figure}
	\noindent The corresponding $\mathbb{Z}_5$-coloring quiver filtration of $L$ with respect to the filtration $S_\ast$ is,
	\[\mathcal{Q}_X^{S_\ast}(L):\quad \mathcal{Q}_X^{\emptyset}(L)=\mathcal{Q}_X^{S_0}(L)\subseteq \mathcal{Q}_X^{S_1}(L)\subseteq...\subseteq \mathcal{Q}_X^{S_5}(L)=\mathcal{Q}_X(L).\]
	We depict each $\mathbb{Z}_5$-coloring quiver $\mathcal{Q}_X^{S_i}(L)$ for $i=0,...,5$ in Figure \ref{filt}.
	
	\begin{figure}
		\centering    
		\begin{subfigure}{0.4\textwidth}
			\centering
			\begin{overpic}[width=53mm]{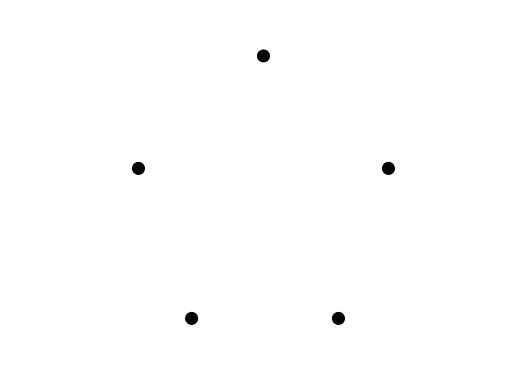}
				\put(46.5,65){\small$v_{f_1}$}
				\put(78,38){\small$v_{f_4}$}
				\put(68,6.6){\small$v_{f_2}$}
				\put(27.3,6.6){\small$v_{f_3}$}
				\put(16,38){\small$v_{f_0}$}
			\end{overpic}
			\vspace{0.2 cm}
			\caption{$\mathcal{Q}_X^{S_0}(L)$, where $S_0=\emptyset.$}
			\label{thtex}
		\end{subfigure}
		\begin{subfigure}
			{0.45\textwidth}
			\centering
			\begin{overpic}[width=53mm]{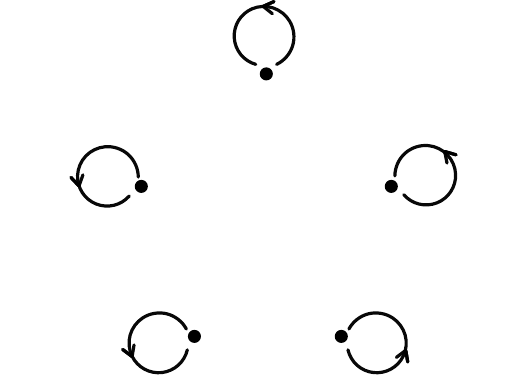}
				\put(46.5,62.5){\small$v_{f_1}$}
				\put(77,36.5){\small$v_{f_4}$}
				\put(67,4.7){\small$v_{f_2}$}
				\put(26.7,4.7){\small$v_{f_3}$}
				\put(17.3,36.5){\small$v_{f_0}$}
			\end{overpic}
			\vspace{0.2 cm}
			\caption{$\mathcal{Q}_X^{S_1}(L)$, where $S_1=\{\sigma_1\}$.}
			\label{balls}
		\end{subfigure}
		\begin{subfigure}
			{0.45\textwidth}
			\vspace{0.5 cm}
			\centering
			\begin{overpic}[width=53mm]{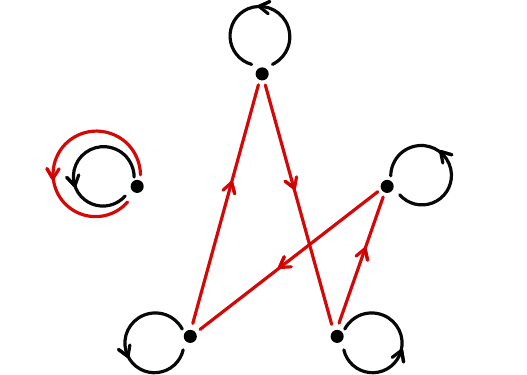}
				\put(46,62.5){\small$v_{f_1}$}
				\put(76.5,36.5){\small$v_{f_4}$}
				\put(66.5,5){\small$v_{f_2}$}
				\put(26,4.9){\small$v_{f_3}$}
				\put(16.5,36.5){\small$v_{f_0}$}
			\end{overpic}
			\vspace{0.2 cm}
			\caption{$\mathcal{Q}_X^{S_2}(L)$, where $S_2=\{\sigma_1,\sigma_2\}$.}
		\end{subfigure}  
		\begin{subfigure}
			{0.45\textwidth}
			\centering
			\begin{overpic}[width=53mm]{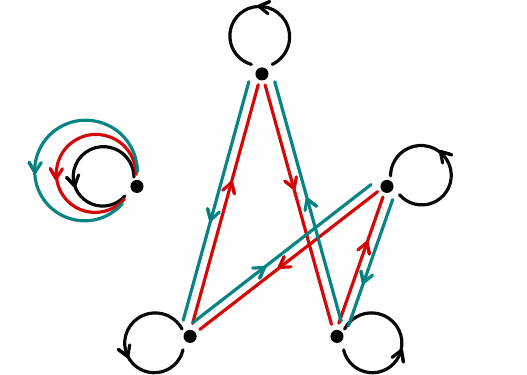}
				\put(46,62.5){\small$v_{f_1}$}
				\put(76.5,36.5){\small$v_{f_4}$}
				\put(66.5,5){\small$v_{f_2}$}
				\put(26,4.9){\small$v_{f_3}$}
				\put(16.5,36.5){\small$v_{f_0}$}
			\end{overpic}
			\vspace{0.2 cm}
			\caption{$\mathcal{Q}_X^{S_3}(L)$, where $S_3=\{\sigma_1,\sigma_2,\sigma_3\}$.}
		\end{subfigure}
		\begin{subfigure}
			{0.45\textwidth}
			\vspace{0.5 cm}
			\centering
			\begin{overpic}[width=53mm]{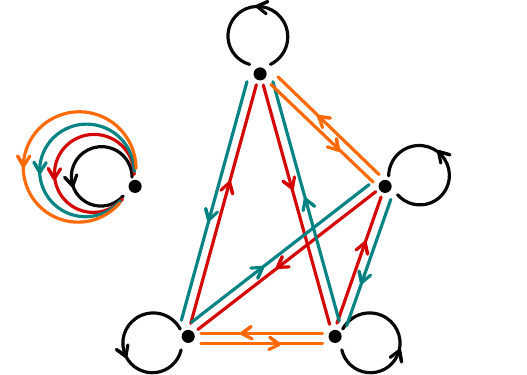}
				\put(45,62.5){\small$v_{f_1}$}
				\put(76.5,36.5){\small$v_{f_4}$}
				\put(66,5.3){\small$v_{f_2}$}
				\put(26,5.1){\small$v_{f_3}$}
				\put(16.5,36.5){\small$v_{f_0}$}
			\end{overpic}
			\vspace{0.2 cm}
			\caption{$\mathcal{Q}_X^{S_4}(L)$, where $S_4=\{\sigma_1,\sigma_2,\sigma_3,\sigma_4\}$.}
		\end{subfigure}
		\begin{subfigure}
			{0.45\textwidth}
			\centering
			\begin{overpic}[width=53mm]{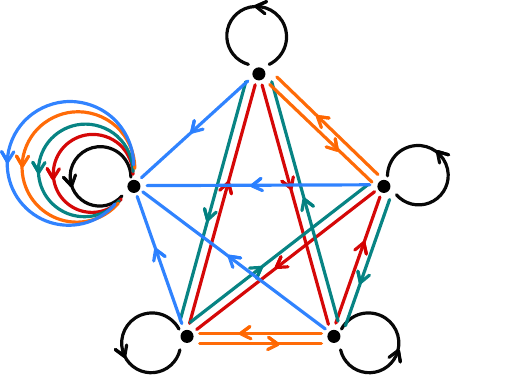}
				\put(45.5,62.5){\small$v_{f_1}$}
				\put(76,36.5){\small$v_{f_4}$}
				\put(66,5){\small$v_{f_2}$}
				\put(25.5,4.9){\small$v_{f_3}$}
				\put(16,36.5){\small$v_{f_0}$}
			\end{overpic}
			\caption{$\mathcal{Q}_X(L)$}
		\end{subfigure}
		\vspace{0.5 cm}
		\caption{A filtration of a biquandle coloring quiver.}
		\label{filt}
	\end{figure}
\end{example}

Example~\ref{filtex} illustrates how the combinatorial structure of the biquandle coloring quiver evolves under a filtration of the endomorphism set of the biquandle. This evolution will be captured algebraically using persistent homology in Section~\ref{persistsec}.

\begin{theorem}
	\label{filtinv}
	Let $X$ be a finite biquandle, and $S_\ast$ a filtration of End$(X)$. The $X$-coloring quiver filtration $\mathcal{Q}_X^{S_\ast}(L)$ is an invariant of links.
\end{theorem}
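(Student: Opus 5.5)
The plan is to show that if $L_1$ and $L_2$ are virtually equivalent diagrams, then the filtrations $\mathcal{Q}_X^{S_\ast}(L_1)$ and $\mathcal{Q}_X^{S_\ast}(L_2)$ are isomorphic in the sense defined in Section~\ref{persistenthom}. This requires quiver isomorphisms $\varphi_i$ at each stage that commute with the inclusions. By induction on the number of moves, it suffices to treat diagrams differing by a single generalized Reidemeister move.

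\textbf{Step 1: a single vertex bijection.} By Remark~\ref{colorinv}, a single move induces a bijection $\Psi:\text{Col}_X(L_1)\to \text{Col}_X(L_2)$. The coloring of $L_2$ agrees with that of $L_1$ on every semi-arc outside the local disk of the move, and is uniquely determined by those labels inside the disk. Equivalently, both colorings come from homomorphisms out of isomorphic fundamental biquandles $\mathcal{B}(L_1)\cong\mathcal{B}(L_2)$, and $\Psi$ is precomposition with that isomorphism. Define $\psi:V\to V'$ by $v_f\mapsto v_{\Psi(f)}$. This one bijection of vertex sets serves as $\varphi_i$ at every stage $i$. Since every $\mathcal{Q}_X^{S_i}(L_k)$ has the full vertex set $\text{Col}_X(L_k)$ and the inclusions are the identity on vertices, the squares commute automatically once each $\varphi_i$ is shown to be a quiver isomorphism.

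\textbf{Step 2: equivariance.} The key point is that $\Psi(\alpha\circ f)=\alpha\circ\Psi(f)$ for every $\alpha\in\text{End}(X)$. In the homomorphism picture, $\Psi(f)=f\circ\theta$ for the biquandle isomorphism $\theta:\mathcal{B}(L_2)\to\mathcal{B}(L_1)$, so $\Psi(\alpha\circ f)=\alpha\circ f\circ\theta=\alpha\circ\Psi(f)$ by associativity. Alternatively, in the local picture, the labels inside the move disk are words in the boundary labels built from $\un,\ovr$, and these words are preserved by the homomorphism $\alpha$.

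\textbf{Step 3: edge counts.} For each $i$, the edges from $v_f$ to $v_g$ in $\mathcal{Q}_X^{S_i}(L_1)$ correspond to the set $\{\alpha\in S_i : \alpha\circ f=g\}$. By Step 2 and the injectivity of $\Psi$, we have $\alpha\circ f=g$ if and only if $\alpha\circ\Psi(f)=\Psi(g)$. Hence
\[e_{\mathcal{Q}_X^{S_i}(L_1)}(v_f,v_g)=e_{\mathcal{Q}_X^{S_i}(L_2)}(\psi(v_f),\psi(v_g))\]
for every $i$, with the same $\psi$ throughout. So $\psi$ is simultaneously an isomorphism at every stage, and the filtrations are isomorphic.

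The main obstacle is Step 2: one must make sure that the isomorphism of Theorem~\ref{quivinv} is realized by one canonical, $\text{End}(X)$-equivariant bijection of colorings, independent of $S$. Merely knowing that the quivers are isomorphic for each $S_i$ separately would not give compatibility with the inclusions. I would settle this through the fundamental biquandle description, where equivariance is the associativity computation above.
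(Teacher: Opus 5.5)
Your proof is correct and follows the same route as the paper: a single bijection $\text{Col}_X(L_1)\to\text{Col}_X(L_2)$ induces the quiver isomorphism at every stage $S_i$, so the squares with the inclusions commute automatically. Your Step 2 (End$(X)$-equivariance via precomposition with the fundamental biquandle isomorphism) spells out what the paper leaves implicit when it asserts that this bijection induces a quiver isomorphism for each $S_i$.
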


\begin{proof}
	Let $L_1$ and $L_2$ be virtually equivalent link diagrams. Then, there is a bijection \[\phi: \text{Col}_X(L_1)\rightarrow \text{Col}_X(L_2) \]
	between the sets of $X$-colorings. For each $i$, this bijection induces a quiver isomorphism
	\[\phi_i:\mathcal{Q}_X^{S_i}(L_1)\rightarrow \mathcal{Q}_X^{S_i}(L_2).\] 
	As the filtration $S_\ast$ is fixed, both quiver filtrations have the same number of stages, and for each $i$ the inclusions 
	\[\mathcal{Q}_X^{S_i}(L_j)\hookrightarrow \mathcal{Q}_X^{S_{i+1}}(L_j)\]
	are induced by the inclusions $S_i\hookrightarrow S_{i+1}$, for $j=1,2$. The maps $\phi_i$ are induced by the same bijection $\phi$ on $X$-colorings for all $i$. Hence, the restriction of $\phi_{i+1}$ to $\mathcal{Q}_X^{S_i}(L_1)$ coincides with $\phi_i$. It follows that the diagram 
	\begin{center}
		\begin{tikzcd} \mathcal{Q}_X^{S_i}(L_1) \arrow[hookrightarrow]{r} \arrow[d,"\phi_i"] & \mathcal{Q}_X^{S_{i+1}}(L_1) \arrow[d,"\phi_{i+1}"] \\ \mathcal{Q}_X^{S_i}(L_2) \arrow[hookrightarrow]{r} & \mathcal{Q}_X^{S_{i+1}}(L_2) \end{tikzcd}
	\end{center}
	commutes for all $i$. Therefore the filtrations  $\mathcal{Q}_X^{S_\ast}(L_1)$ and $\mathcal{Q}_X^{S_\ast}(L_2)$ are isomorphic. Hence $\mathcal{Q}_X^{S_\ast}(L)$ is an invariant of links.
\end{proof}  

\begin{remark}
	As a consequence of Theorem~\ref{filtinv}, any invariant of quivers, when applied to the quivers in the biquandle coloring quiver filtration $\mathcal{Q}_X^{S_\ast}(L)$, defines an ordered multiset invariant of links. As an immediate example, we apply the in-degree polynomial invariant to the biquandle coloring quiver filtrations below.  However, the true strength of biquandle coloring quiver filtration emerges when a functor is applied to it, as we demonstrate in Section~\ref{persistsec}.
\end{remark}

\begin{definition}
	Let $L$ be a link diagram, $X$ a finite biquandle, and $S_\ast$ a filtration of End$(X)$. The \textit{in-degree quiver polynomial multiset} of $L$ is defined as the ordered multiset
	\[\Phi_X^{\text{deg}^+,S_\ast}(L)=\left\{\Phi_X^{\text{deg}^+,S_i}(L)\right\}_{S_i\in S_\ast}.\]
\end{definition}

\begin{example}
	Consider the biquandle coloring quiver filtration $\mathcal{Q}_X^{S_\ast}(L)$ of Example~\ref{filtex} depicted in Figure~\ref{filt}. Then,
	\[\Phi_X^{\text{deg}^+,S_\ast}(L)=\{5,5u,5u^2,5u^3,5u^4,4u^4+u^9\}.\]
\end{example}

\begin{remark}
	Consider a filtration $S_\ast=\{S_i\}_{i=0,...,m}$
	of End$(X)$. It follows immediately that the $X$-coloring quiver filtration $\mathcal{Q}_X^{S_\ast}(L)$ is an enhancement of $\mathcal{Q}_X^{S_i}(L)$ for all $i=0,...,m$. Moreover, consider a subset $S\subseteq \text{End}(X)$. Then, for any filtration $S_\ast=\{S_i\}_{i=0,...,m}$
	of End$(X)$ such that $S=S_i$ for some $i$, $\mathcal{Q}_X^{S_\ast}(L)$ is an enhancement of $\mathcal{Q}_X^{S}(L)$.
\end{remark}

\subsection{Persistent $\boldsymbol{N}$-Directed Clique Homology of Biquandle Coloring Quivers}
\label{persistsec}
In this section, we turn our attention to special types of filtrations $S_\ast=\{S_i\}_{i=0,...,m}$ of End$(X)$ where $S_0$ is set to be the empty set. That is,
\[S_\ast: \quad \emptyset=S_0\subseteq...\subseteq S_m.\]
\begin{proposition}
	\label{subsub}
	Let $L$ be a link, $X$ a finite biquandle, $S_\ast$ a filtration of End$(X)$ and consider the $X$-coloring quiver filtration $\mathcal{Q}_X^{S_\ast}(L)$. For all $i\leq j$,
	\[\mathcal{K}^{(N)}(\mathcal{Q}_X^{S_i}(L))\subseteq\mathcal{K}^{(N)}(\mathcal{Q}_X^{S_j}(L)),\]
	that is, the associated $N$-directed clique complex of $\mathcal{Q}_X^{S_i}$ is a subcomplex of that of $\mathcal{Q}_X^{S_j}$. Hence, the associated $N$-directed clique complexes of the quivers in $\mathcal{Q}_X^{S_\ast}$ form the filtration 
	\begin{align}
		\mathcal{K}^{(N)}(\mathcal{Q}_X^{S_\ast}(L)):\quad \mathcal{K}^{(N)}(\mathcal{Q}_X^{\emptyset}(L))=\mathcal{K}^{(N)}(\mathcal{Q}_X^{S_0}(L))\subseteq...\subseteq \mathcal{K}^{(N)}(\mathcal{Q}_X^{S_m}(L)). \label{ndcfilt}
	\end{align}
\end{proposition}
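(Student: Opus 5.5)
The plan is to reduce the statement to a monotonicity property of edge multiplicities. I will fix $i\leq j$ and show that every simplex of $\mathcal{K}^{(N)}(\mathcal{Q}_X^{S_i}(L))$ is a simplex of $\mathcal{K}^{(N)}(\mathcal{Q}_X^{S_j}(L))$. The subcomplex condition then follows, because the first collection is already an abstract ordered simplicial complex and it sits inside the second.

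\textbf{Step 1: compare edge counts.} Both quivers have the same vertex set $\{v_f \mid f\in \mathrm{Col}_X(L)\}$. By definition of the coloring quiver, for colorings $f,g$,
\[e_{\mathcal{Q}_X^{S}(L)}(v_f,v_g)=|\{\alpha\in S \mid \alpha\circ f=g\}|.\]
Each endomorphism $\alpha\in S$ contributes its own edge, which is how multi-edges arise. Since $S_i\subseteq S_j$, the set being counted for $S_i$ is contained in the one for $S_j$. Hence
\[e_{\mathcal{Q}_X^{S_i}(L)}(v_f,v_g)\leq e_{\mathcal{Q}_X^{S_j}(L)}(v_f,v_g)\]
for all $f,g$. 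This is precisely the content of the paper's remark that $\mathcal{Q}_X^{S_i}(L)\subseteq \mathcal{Q}_X^{S_j}(L)$ with the same vertex set.

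\textbf{Step 2: compare simplices.} The $0$-simplices coincide, since the vertex sets are equal. For $n>0$, take $(v_0,\dots,v_n)\in K_n^{(N)}(\mathcal{Q}_X^{S_i}(L))$. The vertices are pairwise distinct, and $e_{\mathcal{Q}_X^{S_i}}(v_a,v_b)\geq N$ for all $a<b$. By Step 1 the same inequality holds in $\mathcal{Q}_X^{S_j}$, so the tuple lies in $K_n^{(N)}(\mathcal{Q}_X^{S_j}(L))$. This is where the threshold $\geq N$ (rather than $=N$) matters, as anticipated in Remark~\ref{justremark}: with an equality condition, adding endomorphisms could destroy simplices and monotonicity would fail.

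\textbf{Step 3: assemble the filtration.} Applying Steps 1 and 2 to consecutive indices gives the nested chain \eqref{ndcfilt}. For $S_0=\emptyset$, the quiver $\mathcal{Q}_X^{\emptyset}(L)$ has no edges, so its complex consists only of the $0$-simplices.

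The only point needing care is making sure that multiplicities are counted per endomorphism and are therefore monotone in $S$. Once that is checked, the rest of the argument is immediate.
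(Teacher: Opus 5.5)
Your proof is correct and follows essentially the same route as the paper. From $S_i\subseteq S_j$ you get $e_{\mathcal{Q}_X^{S_i}(L)}(v_f,v_g)\leq e_{\mathcal{Q}_X^{S_j}(L)}(v_f,v_g)$ for all colorings $f,g$, so the threshold condition $\geq N$ is preserved and every simplex at stage $i$ remains a simplex at stage $j$; your explicit count $e_{\mathcal{Q}_X^{S}(L)}(v_f,v_g)=|\{\alpha\in S \mid \alpha\circ f=g\}|$ simply makes precise what the paper phrases as ``$\mathcal{Q}_X^{S_i}(L)$ is a subquiver of $\mathcal{Q}_X^{S_j}(L)$ with the same vertex set.''
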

\begin{proof}
	For $i\leq j$, we have $S_i\subseteq S_j$ and hence $\mathcal{Q}_X^{S_i}(L)$ is a subquiver of $\mathcal{Q}_X^{S_j}(L)$ with the same vertex set. Thus, the number of directed edges between vertices does not decrease as we move along the filtration. That is, if we denote the number of directed edges from $v_f$ to $v_g$ in $\mathcal{Q}_X^{S_k}$ by $e_k(v_f,v_g)$ for all $k$, then $e_i(v_f,v_g)\leq e_j(v_f,v_g)$. Therefore, $e_j(v_f,v_g)\geq N$ whenever $e_i(v_f,v_g)\geq N$. It follows that any abstract ordered simplex $(v_{f_0},...v_{f_n})$ of stage $i$ is also an abstract ordered simplex of stage $j$ of the corresponding abstract ordered simplicial complex sequence.
\end{proof}
\begin{remark}
	As stated in Remark~\ref{justremark}, one could have defined the $N$-directed clique complex by requiring exactly $N$ directed edges between vertices. However, Proposition~\ref{subsub} would not be true in that case. Indeed, as the filtration progresses, the number of edges between two vertices may increase from $N$ to $N+q$, for some $q>0$, causing simplices that previously satisfied the condition to no longer do so. Consequently, simplices may disappear in the associated $N$-clique complexes along the filtration of the biquandle coloring quiver. Thus, the resulting sequence of $N$-directed clique complexes would fail to define a filtration.
\end{remark}

We proceed by applying the homology functor $H_n$ to Filtration~\eqref{ndcfilt} 
and obtain the sequence 
\begin{center}
	\scalebox{0.94}{$\begin{aligned}
			H_n(\mathcal{K}^{(N)}(\mathcal{Q}_X^{S_\ast}(L)): H_n(\mathcal{K}^{(N)}(\mathcal{Q}_X^{\emptyset}(L)))=H_n(\mathcal{K}^{(N)}(\mathcal{Q}_X^{S_0}(L)))\rightarrow ... \rightarrow H_n(\mathcal{K}^{(N)}(\mathcal{Q}_X^{S_m}(L))),
		\end{aligned}$}
\end{center}
where the arrows represent group homomorphisms induced by inclusion maps. This sequence, by Definitions~\ref{homdef1} and \ref{homdef}, corresponds to the sequence
\begin{align*}
	H_n^{(N)}(L;X,S_\ast):\quad H_n^{(N)}(L;X,\emptyset)=H_n^{(N)}(L;X,S_0) \rightarrow ... \rightarrow H_n^{(N)}(L;X,S_m).
\end{align*}
\begin{definition}
	Let $L$ be a link diagram, $X$ a finite biquandle, $S_\ast$ a filtration of End$(X)$ and $N\geq 1$. For $i \leq j$, the \textit{$n$-th persistent $N$-directed clique homology group} of $L$ with respect to $X$ and $S_\ast$ is defined by
	\begin{align*}
		H_{n}^{(N),i,j}(L;X,S_\ast):=\text{im}\left(H_{n}^{(N)}(L;X,S_i)\rightarrow H_{n}^{(N)}(L;X,S_j)\right).
	\end{align*}
\end{definition}
\begin{theorem}
	\label{homeq}
	Let $X$ be a finite biquandle and $S_\ast$ a filtration of End$(X)$. For $i \leq j$, the $n$-th persistent $N$-directed clique homology group $H_{n}^{(N),i,j}(L;X,S_\ast)$ is an invariant of links for $n\geq 0$ and $N\geq1$.
\end{theorem}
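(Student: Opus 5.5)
The plan is to chain together the functorial steps already set up: Theorem~\ref{filtinv} gives an isomorphism of quiver filtrations, and the point is that this isomorphism propagates through the $N$-directed clique construction and homology while keeping every square commuting. Once we have a commuting ladder of homology groups, the images of the horizontal maps are isomorphic.

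First, let $L_1$ and $L_2$ be virtually equivalent link diagrams. By Theorem~\ref{filtinv} (more precisely, its proof), the bijection $\phi:\text{Col}_X(L_1)\to\text{Col}_X(L_2)$ induces quiver isomorphisms $\phi_i:\mathcal{Q}_X^{S_i}(L_1)\to\mathcal{Q}_X^{S_i}(L_2)$. These all have the same underlying vertex map $v_f\mapsto v_{\phi(f)}$, and they commute with the inclusions induced by $S_i\subseteq S_j$.

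Second, by Proposition~\ref{quivinv2} each $\phi_i$ induces a simplicial isomorphism
\[
\Phi_i:\mathcal{K}^{(N)}(\mathcal{Q}_X^{S_i}(L_1))\to\mathcal{K}^{(N)}(\mathcal{Q}_X^{S_i}(L_2)),\qquad (v_0,\dots,v_n)\mapsto(\phi(v_0),\dots,\phi(v_n)).
\]
By Proposition~\ref{subsub} the complexes form filtrations. Because every $\Phi_i$ is given by the same vertex map, the restriction of $\Phi_j$ to the subcomplex $\mathcal{K}^{(N)}(\mathcal{Q}_X^{S_i}(L_1))$ equals $\Phi_i$ for $i\le j$. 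Hence the filtrations of complexes, and of the associated simplicial chain complexes, are isomorphic with commuting squares.

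Third, apply $H_n$ (Remark~\ref{func}). Functoriality gives isomorphisms $(\Phi_i)_*$ and, for $i\le j$, the commutative square
\[
(\Phi_j)_*\circ \iota^{(1)}_{i,j}=\iota^{(2)}_{i,j}\circ(\Phi_i)_*,
\]
where $\iota^{(k)}_{i,j}:H_n^{(N)}(L_k;X,S_i)\to H_n^{(N)}(L_k;X,S_j)$ are the inclusion-induced maps. Then
\[
(\Phi_j)_*\bigl(\operatorname{im}\iota^{(1)}_{i,j}\bigr)=\iota^{(2)}_{i,j}\bigl((\Phi_i)_*(H_n^{(N)}(L_1;X,S_i))\bigr)=\operatorname{im}\iota^{(2)}_{i,j},
\]
using that $(\Phi_i)_*$ is surjective. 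Since $(\Phi_j)_*$ is injective, it restricts to an isomorphism $H_n^{(N),i,j}(L_1;X,S_\ast)\cong H_n^{(N),i,j}(L_2;X,S_\ast)$.

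The only real point needing care is the commutativity in the second step, namely that the isomorphisms at different stages are compatible. Mere existence of stagewise isomorphisms from Theorem~\ref{quivinv} would not suffice to identify the images. This compatibility is secured because a single coloring bijection $\phi$ induces all the $\phi_i$, so we will explicitly invoke that feature of the proof of Theorem~\ref{filtinv} rather than its bare statement.
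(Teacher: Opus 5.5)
Your proposal is correct and follows essentially the same route as the paper: use Theorem~\ref{filtinv} to get stagewise quiver isomorphisms compatible with the inclusions, apply $H_n^{(N)}$ to obtain a commuting square for each $i\le j$, and conclude that $(\varphi_j)_\ast$ maps one image isomorphically onto the other. Your version is somewhat more explicit than the paper's. You check the compatibility directly on the clique complexes via the common vertex map, which is cleaner than the paper's appeal to Remark~\ref{func}, since that remark only addresses functoriality on isomorphisms while the horizontal maps are inclusions. You also spell out the surjectivity/injectivity argument that identifies the images.
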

\begin{proof}
	Let $L_1$ and $L_2$ be virtually equivalent links. By Theorem~\ref{filtinv}, the $X$-coloring quiver filtrations $\mathcal{Q}_X^{S_\ast}(L_1)$ and $\mathcal{Q}_X^{S_\ast}(L_2)$ are isomorphic as filtrations in the sense that for each $i$, there is a quiver isomorphism
	\[\varphi_i:\mathcal{Q}_X^{S_i}(L_1)\rightarrow \mathcal{Q}_X^{S_i}(L_2)\]
	such that the diagram
	\begin{center}
		\begin{tikzcd} \mathcal{Q}_X^{S_i}(L_1) \arrow[hookrightarrow]{r} \arrow[d,"\varphi_i"] & \mathcal{Q}_X^{S_j}(L_1) \arrow[d,"\varphi_j"] \\ \mathcal{Q}_X^{S_i}(L_2) \arrow[hookrightarrow]{r} & \mathcal{Q}_X^{S_j}(L_2) \end{tikzcd}
	\end{center}
	commutes for each $i\leq j$. Applying $H_n^{(N)}$, we obtain a commutative diagram
	
	\begin{center}
		\begin{tikzcd} H_n^{(N)}(\mathcal{Q}_X^{S_i}(L_1)) \arrow[r] \arrow[d,"\varphi_{i_\ast}","\cong"'] & H_n^{(N)}(\mathcal{Q}_X^{S_j}(L_1)) \arrow[d,"\cong","\varphi_{j_\ast}"'] \\ H_n^{(N)}(\mathcal{Q}_X^{S_i}(L_2)) \arrow[r] & H_n^{(N)}(\mathcal{Q}_X^{S_j}(L_2)) \end{tikzcd}
	\end{center}
	for each $i\leq j$, where commutativity follows from the functoriality of $H_n^{(N)}$ (see Remark~\ref{func}). It follows that $\varphi_{j\ast}$ induces an isomorphism between the images of the maps
	\[H_n^{(N)}(\mathcal{Q}_X^{S_i}(L_1))\rightarrow H_n^{(N)}(\mathcal{Q}_X^{S_j}(L_1)) \quad \text{and}\quad H_n^{(N)}(\mathcal{Q}_X^{S_i}(L_2))\rightarrow H_n^{(N)}(\mathcal{Q}_X^{S_j}(L_2)),\]
	hence, by Definition~\ref{homdef}, between the images of the maps
	\[H_n^{(N)}(L_1;X,S_i))\rightarrow H_n^{(N)}(L_1;X,S_j)) \quad \text{and}\quad H_n^{(N)}(L_2;X,S_i))\rightarrow H_n^{(N)}(L_2;X,S_j)),\]
	respectively. Therefore, 
	\[H_{n}^{(N),i,j}(L_1;X,S_\ast)\cong H_{n}^{(N),i,j}(L_2;X,S_\ast).\]
	Thus, the $n$-th persistent $N$-directed clique homology group is an invariant of links.
\end{proof}

As in Section~\ref{persistenthom}, the \textit{persistent $N$-directed clique homology barcode} is defined as the multiset of persistence intervals determined by these persistent homology groups and we denote it by $\mathcal{P}^{(N)}(L;X,S_\ast)$.
\begin{corollary}
	\label{barcodeinv}
	Let $X$ be a finite biquandle and $S_\ast$ a filtration of End$(X)$. The persistent $N$-directed clique homology barcode $\mathcal{P}^{(N)}(L;X,S_\ast)$ is an invariant of links.
\end{corollary}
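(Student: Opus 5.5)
The plan is to show that virtually equivalent diagrams yield isomorphic persistence modules. The barcode is determined up to equality by the isomorphism class of the persistence module, so this is enough.

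Let $L_1$ and $L_2$ be virtually equivalent link diagrams. By Theorem~\ref{filtinv}, there are quiver isomorphisms $\varphi_i:\mathcal{Q}_X^{S_i}(L_1)\to\mathcal{Q}_X^{S_i}(L_2)$ that commute with the inclusions. By Proposition~\ref{quivinv2} and Remark~\ref{func}, these induce simplicial isomorphisms $\mathcal{K}^{(N)}(\mathcal{Q}_X^{S_i}(L_1))\to\mathcal{K}^{(N)}(\mathcal{Q}_X^{S_i}(L_2))$. These are compatible with the inclusions of Proposition~\ref{subsub}, since all $\varphi_i$ are induced by the same bijection $\phi$ on colorings. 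The result is an isomorphism of the filtrations \eqref{ndcfilt}, and hence of the chain complex filtrations. Applying $H_n$ gives, exactly as in the proof of Theorem~\ref{homeq}, isomorphisms $\varphi_{i\ast}:H_n^{(N)}(L_1;X,S_i)\to H_n^{(N)}(L_2;X,S_i)$ that commute with every structure map $H_n^{(N)}(L;X,S_i)\to H_n^{(N)}(L;X,S_j)$. That is, the two persistence modules $H_n^{(N)}(L_1;X,S_\ast)$ and $H_n^{(N)}(L_2;X,S_\ast)$ are isomorphic.

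The key step is to argue that the persistence intervals are determined by this isomorphism class. I would use the standard rank description. The number of intervals $[i,j)$ in degree $n$ is
\[
\beta^{i,j-1}-\beta^{i,j}-\beta^{i-1,j-1}+\beta^{i-1,j},
\]
where $\beta^{i,j}$ is the rank of $H_n^{(N),i,j}$, with the conventions $\beta^{-1,\cdot}=0$ and $j=\infty$ handled by $\beta^{i,m}$. By Theorem~\ref{homeq}, each group $H_n^{(N),i,j}$ is a link invariant, so every $\beta^{i,j}$ agrees for $L_1$ and $L_2$. It follows that the interval multiplicities agree.

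The main subtlety is that over $\mathbb{Z}$ the "birth/death of classes" description is informal, so the barcode must be defined via ranks (or over a field, where the interval decomposition is unique). I would make this explicit by taking ranks of the free parts, or by tensoring with a field. Either way, the multiplicities are functions of the isomorphism class of the module diagram, and they are therefore equal for $L_1$ and $L_2$. Hence $\mathcal{P}^{(N)}(L;X,S_\ast)$ is an invariant of links.
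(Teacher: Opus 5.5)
Your proposal is correct and follows the same route as the paper, which simply invokes Theorem~\ref{homeq} and asserts that the barcode is determined by the persistent homology groups. Your explicit rank inclusion--exclusion formula for the interval multiplicities supplies the justification for that assertion, which the paper leaves implicit. So does your caveat about working over a field or using free ranks.
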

\begin{proof}
	By Theorem~\ref{homeq}, the persistent $N$-directed clique homology groups are invariants of links. Since the persistence barcode is determined by these groups, it follows that $\mathcal{P}^{(N)}(L;X,S_\ast)$ is also an invariant of links.
\end{proof}

\begin{remark}
	The particular reason for the choice $S_0=\emptyset$ in the filtration $S_\ast$ is the following. Observe that $\mathcal{Q}_X^{\emptyset}(L)$ consists of vertices and no edges. Hence, 
	\[H_0^{(N)}(\mathcal{Q}_X^{\emptyset}(L))\cong \mathbb{Z}^{\Phi_X^{\mathbb{Z}}(L)} \] and its generators are in a correspondence with $X$-colorings of $L$. It follows that the number of persistence intervals of the form $[0,i)$, where $i=1,...,m$ or $i=\infty$, is equal to the number of vertices in the biquandle coloring quiver, hence to the number of $X$-colorings of $L$. Thus, $\mathcal{P}^{(N)}(L;X,S_*)$ enhances the biquandle counting invariant $\Phi_X^{\mathbb{Z}}(L)$ by encoding not only the number of colorings, but also how these colorings gradually interact under the filtration $S_\ast$ of biquandle endomorphisms.
\end{remark}

\begin{figure}
	\centering
	\includegraphics[width=0.55\linewidth]{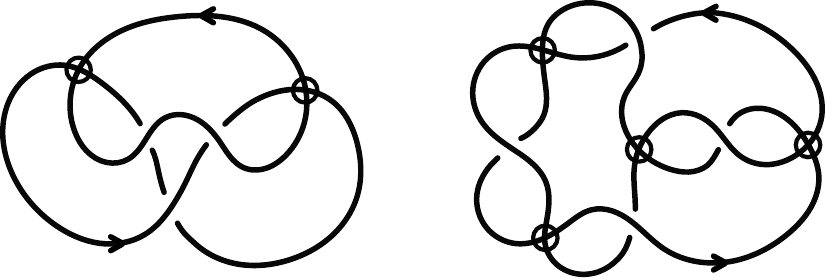}
	\vspace{0.3 cm}
	\caption{Two links $L_1$ (left-hand side) and $L_2$ (right-hand side) related to Example~\ref{ex2}}
	\label{vlink2}
\end{figure}

\begin{example}
	\label{ex2}
	Consider the link diagrams $L_1$ and $L_2$ in Figure~\ref{vlink2}, and the biquandle $\mathbb{Z}_6$ with the operation matrix
	\[\left[\begin{array}{c c c c c c | c c c c c c}
		1& 1& 1& 1& 1& 1& 1& 1& 1& 1& 1& 1\\
		2& 2& 2& 2& 2& 2& 2& 2& 4& 3& 4& 3\\
		3& 3& 3& 3& 3& 3& 3& 4& 3& 2& 2& 4\\
		4& 4& 4& 4& 4& 4& 4& 3& 2& 4& 3& 2\\
		5& 5& 5& 5& 5& 5& 5& 6& 6& 6& 5& 5\\
		6& 6& 6& 6& 6& 6& 6& 5& 5& 5& 6& 6\\
		
	\end{array}\right].\]
	Let $N=2$ and $S_\ast=\{S_i\}_{i=0,...,3}$ be a filtration of $\text{End}(\mathbb{Z}_6)$ where,
	\begin{align*}
		S_0&=\emptyset,\\
		S_1&=\{[6,6,6,6,6,6],[6,1,1,1,6,6]\},\\
		S_2&=S_1\cup\{[1,4,2,3,5,6],[6,1,1,1,1,1]\},\\
		S_3&=S_2\cup\{[1,6,6,6,1,1],[1,2,2,2,1,1],[1,4,4,4,1,1],[5,6,6,6,6,6]\}.
	\end{align*}
	Our Python computations show that the number of $\mathbb{Z}_6$-colorings of the links $L_1$ and $L_2$ are equal: $\Phi_{\mathbb{Z}_6}^{\mathbb{Z}}(L_1)= 30=\Phi_{\mathbb{Z}_6}^{\mathbb{Z}}(L_2)$. Persistence barcodes $\mathcal{P}^{(2)}(L_1;\mathbb{Z}_6,S_*)$ and $\mathcal{P}^{(2)}(L_2;\mathbb{Z}_6,S_*)$ are shown in Figure~\ref{pdiagrams}, where the numbers at the end of the blue bars represent the multiplicities of the corresponding persistence intervals. Observe that the persistence barcodes of $L_1$ and $L_2$ are not identical. In particular, the multiplicities of the intervals $[0,2),\;[0,3)$ and $[3,\infty)$ distinguish $L_1$ and $L_2$. Observe also that the total number of persistence intervals in $H_0^{(2)}$ is equal to the biquandle counting invariant in both barcodes, as expected. All homology computations in this and the following examples are carried out over $\mathbb{Z}_2$ for computational simplicity.
	
	\begin{figure}
		\centering    
		\begin{subfigure}{0.5\textwidth}
			\centering
			\begin{overpic}[width=0.85\linewidth]{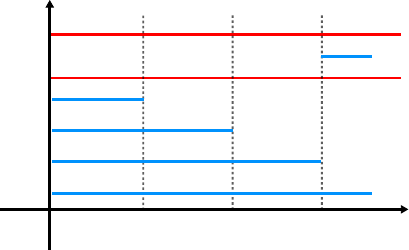}
				\put(0,25){\small $H_0^{(2)}$}
				\put(0,46){\small $H_2^{(2)}$}
				\put(59,28){\small$15$}
				\put(93,13){\small$1$}
				\put(93,46){\small$13$}
				\put(37,36){\small$8$}
				\put(81,20.5){\small$6$}
				\put(7,4){\small $0$}
				\put(34,4){\small $1$}
				\put(56,4){\small $2$}
				\put(78,4){\small $3$}
			\end{overpic}
			\caption{$\mathcal{P}^{(2)}(L_1;\mathbb{Z}_6,S_*)$}
			\vspace{0.3 cm}
		\end{subfigure}\hfill
		\begin{subfigure}{0.5\textwidth}
			\centering
			\begin{overpic}[width=0.85\linewidth]{barcode243.pdf}
				\put(0,25){\small $H_0^{(2)}$}
				\put(0,46){\small $H_2^{(2)}$}
				\put(59,28){\small$9$}
				\put(93,13){\small$1$}
				\put(93,46){\small$7$}
				\put(37,36){\small$8$}
				\put(81,20.5){\small$12$}
				\put(7,4){\small $0$}
				\put(34,4){\small $1$}
				\put(56,4){\small $2$}
				\put(78,4){\small $3$}
			\end{overpic}
			\caption{$\mathcal{P}^{(2)}(L_2;\mathbb{Z}_6,S_*)$}
			\vspace{0.3 cm}
		\end{subfigure}
		\caption{Persistence barcodes of the links of Example~\ref{ex2}.}
		\label{pdiagrams}
	\end{figure}
\end{example}

Example~\ref{ex2} shows that there exist links which are not distinguished by the biquandle counting invariant but the persistence barcode. Hence, the persistence barcode is a proper enhancement of the biquandle counting invariant.

\begin{remark} 
	In contrast with the general setting described in Section~\ref{persistenthom}, the filtration considered in our setting is defined from the biquandle coloring quiver associated to a link and determined entirely by the filtration of the biquandle endomorphism set End$(X)$. Hence, it is preserved under Reidemeister moves. Consequently, the resulting persistence groups and barcodes are invariants of links. That is, any difference between two barcodes, even in a single (possibly short) persistence interval, implies that the corresponding links are not equivalent. Hence, in our setting, not only the long-persisting homology classes but all homology classes provide meaningful information.
\end{remark} 

As demonstrated in this section, persistence barcodes provide an invariant of links whose strength and computability can be adjusted by suitable choices of the filtration $S_\ast$ and $N$. However, we can further enhance persistence barcodes by considering the following observation. Consider the filtrations of two abstract ordered simplicial complexes $\mathcal{K}$ and $\mathcal{K'}$ 
\begin{center}
	\scalebox{1}{$\begin{aligned}
			&\mathcal{K}_0=\{\{(v_0),(v_1),(v_2)\}\}\subseteq \mathcal{K}_1=\{\{(v_0),(v_1),(v_2)\},\{(v_0,v_1),(v_0,v_2),(v_1,v_2)\}\},\\
			&\mathcal{K}_0'=\{\{(v_0),(v_1),(v_2)\}\}\subseteq \mathcal{K}_1'=\{\{(v_0),(v_1),(v_2)\},\{(v_0,v_1),(v_0,v_2),(v_1,v_2)\},\{(v_0,v_1,v_2)\}\},
		\end{aligned}$}
\end{center}
respectively.
\begin{figure}
	\centering
	\begin{overpic}[width=0.4\linewidth]{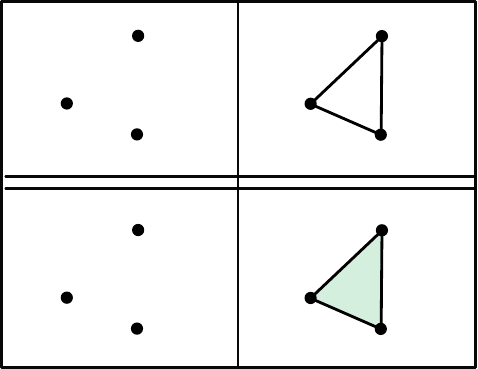}
		\put(3,45){\small$|\mathcal{K}_0|$}
		\put(53,45){\small$|\mathcal{K}_1|$}
		\put(3,5){\small$|\mathcal{K}_0'|$}
		\put(53,5){\small$|\mathcal{K}_1'|$}
	\end{overpic}
	\vspace{0.3 cm}
	\caption{Filtrations of two abstract ordered simplicial complexes $\mathcal{K}$ and $\mathcal{K}'$.}
	\label{bornd}
\end{figure}
The geometric realizations of these filtrations are depicted in Figure~\ref{bornd}. The corresponding chain complexes for $\mathcal{K}_0$ and $\mathcal{K}_1$ are
\begin{align*}
	&C_\ast(\mathcal{K}_0):\quad ...\rightarrow \{0\}\rightarrow \qquad\qquad\;\;\{0\}\qquad\qquad\;\;\; \rightarrow \langle(v_0),(v_1),(v_2)\rangle \rightarrow \{0\},\\
	&C_\ast(\mathcal{K}_1):\quad ...\rightarrow\{0\}\rightarrow \langle (v_0,v_1),(v_0,v_2),(v_1,v_2)\rangle \rightarrow \langle(v_0),(v_1),(v_2)\rangle \rightarrow \{0\}.
\end{align*}
In $C_\ast(\mathcal{K}_1)$, it is clear that $(v_0,v_1)-(v_0,v_2)+(v_1,v_2)\in$ ker$(\partial_1)$. Thus, as we move from $\mathcal{K}_0$ to $\mathcal{K}_1$, an element $(v_0,v_1)-(v_0,v_2)+(v_1,v_2)$ of ker$(\partial_1)$ is born and since im$(\partial_2)=0$, $(v_0,v_1)-(v_0,v_2)+(v_1,v_2)$ corresponds to a nontrivial homology class. That is, a nontrivial homology class is born at the second stage of the filtration.

On the other hand, the corresponding chain complexes for $\mathcal{K}_0'$ and $\mathcal{K}_1'$ are
\begin{align*}
	&C_\ast(\mathcal{K}_0'): ...\rightarrow \quad\;\;\;\{0\}\qquad\rightarrow \qquad\qquad\;\;\{0\}\qquad\qquad\;\;\; \rightarrow \langle(v_0),(v_1),(v_2)\rangle \rightarrow \{0\},\\
	&C_\ast(\mathcal{K}_1'): ...\rightarrow \langle (v_0,v_1,v_2)\rangle\rightarrow \langle (v_0,v_1),(v_0,v_2),(v_1,v_2)\rangle \rightarrow \langle(v_0),(v_1),(v_2)\rangle \rightarrow \{0\}.
\end{align*}
In $C_\ast(\mathcal{K}_1')$, we have $(v_0,v_1)-(v_0,v_2)+(v_1,v_2)\in \operatorname{ker}(\partial_1')$. However, this time, $(v_0,v_1)-(v_0,v_2)+(v_1,v_2)\in\operatorname{im}(\partial_2')$ since it is also the boundary of the 2-simplex $(v_0,v_1,v_2)$ which is born in $\mathcal{K}_1$ as well. Thus, as we move from $\mathcal{K}_0'$ to $\mathcal{K}_1'$, a homology class $(v_0,v_1)-(v_0,v_2)+(v_1,v_2)+\text{im}(\partial_2')$ is born, but only as a trivial homology class. We call such homology classes \textit{stillborn homology classes}; that is, homology classes that are born at a stage of the filtration but are already trivial at that stage.

In the standard persistence barcode, the stillborn homology classes are not recorded and are therefore invisible in the persistence barcode. In our setting, however, they carry meaningful combinatorial information, and we encode them as matrices.

\begin{definition}
	Let $L$ be a link, $X$ a finite biquandle, $S_\ast$ a filtration of End$(X)$ and $N\geq 1$. The \textit{stillborn matrix} associated to $L$ with respect to $X$ and $S_\ast$ is defined as \[\mathcal{M}^{(N)}(L;X,S_\ast)=(a_{ij})_{i,j\geq 0},\] where $a_{ij}$ is the number of stillborn homology classes of dimension $i$ and stage $j$ of the filtration $\mathcal{Q}_{X}^{S_\ast}(L)$.
\end{definition}

Although we define the stillborn matrix for links through the biquandle coloring quiver filtrations, the construction applies to filtrations of simplicial complexes. We therefore illustrate the concept with the following simple example.

\begin{example}
	Consider the filtrations of simplicial complexes $\mathcal{K}$ and $\mathcal{K}'$ depicted in Figure~\ref{bornd}. The stillborn matrices of $\mathcal{K}$ and $\mathcal{K}'$, indexed by dimensions $0,1$ and filtration stages $0,1$, are
	\[\mathcal{M}^{(1)}(\mathcal{K})=\left[\begin{array}{c c}
		0& 0\\
		0& 0
	\end{array}\right], \quad \mathcal{M}^{(1)}(\mathcal{K}')=\left[\begin{array}{c c}
		0& 0\\
		0& 1
	\end{array}\right].\]
\end{example}
\begin{theorem}
	\label{deadinv}
	Let $X$ be a finite biquandle, $S_\ast$ a filtration of End$(X)$ and $N\geq 1$. Then, the stillborn matrix $\mathcal{M}^{(N)}(L;X,S_\ast)$ is an invariant of links.
\end{theorem}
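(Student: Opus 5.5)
We will follow the proofs of Theorem~\ref{homeq} and Corollary~\ref{barcodeinv}. The idea is that the stillborn counts are determined entirely by the isomorphism type of the filtered chain complex $C_\ast(\mathcal{K}^{(N)}(\mathcal{Q}_X^{S_\ast}(L)))$. First we fix a precise meaning for ``number of stillborn homology classes of dimension $i$ at stage $j$.'' A homology class born at stage $j$ comes from a cycle in $\ker\partial_i^{(j)}$ that is not in $\ker\partial_i^{(j-1)}$. It is stillborn if such cycles lie in $\operatorname{im}\partial_{i+1}^{(j)}$. So we take $a_{ij}$ to be the rank (over $\mathbb{Z}_2$, as in the computations, or the rank over the chosen coefficients) of
\[
\bigl(\ker\partial_i^{(j)}\cap \operatorname{im}\partial_{i+1}^{(j)}\bigr)\big/\bigl(\ker\partial_i^{(j-1)}\cap \operatorname{im}\partial_{i+1}^{(j)}\bigr).
\]
For $j=0$ we take the denominator to be $0$. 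Any reasonable equivalent choice works the same way. All that matters is that $a_{ij}$ is defined purely from the subgroups $C_\ast^{(j-1)}\subseteq C_\ast^{(j)}$ and their boundary maps.

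Let $L_1$ and $L_2$ be virtually equivalent. By Theorem~\ref{filtinv}, there are quiver isomorphisms $\varphi_j:\mathcal{Q}_X^{S_j}(L_1)\to\mathcal{Q}_X^{S_j}(L_2)$ compatible with the inclusions. All of them are induced by a single bijection $\phi$ of colorings. By Proposition~\ref{quivinv2} and Remark~\ref{func}, each $\varphi_j$ induces an isomorphism of $N$-directed clique complexes. These isomorphisms commute with the inclusions of Proposition~\ref{subsub}, since every one of them is given by the same vertex map $\phi$. Passing to chains, we get chain isomorphisms $\Phi_j:C_\ast(\mathcal{K}^{(N)}(\mathcal{Q}_X^{S_j}(L_1)))\to C_\ast(\mathcal{K}^{(N)}(\mathcal{Q}_X^{S_j}(L_2)))$. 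Each $\Phi_j$ restricts to $\Phi_{j-1}$ on the stage $j-1$ subcomplex. This gives an isomorphism of filtered chain complexes.

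Because $\Phi_j$ commutes with boundaries, it carries $\ker\partial_i^{(j)}$ onto $\ker\partial_i^{(j)}$ and $\operatorname{im}\partial_{i+1}^{(j)}$ onto $\operatorname{im}\partial_{i+1}^{(j)}$. Since $\Phi_j|=\Phi_{j-1}$, it also carries $\ker\partial_i^{(j-1)}$ onto its counterpart. It therefore maps the numerator and denominator above isomorphically onto the corresponding subgroups for $L_2$. So it induces an isomorphism of the quotients, and $a_{ij}(L_1)=a_{ij}(L_2)$ for all $i$ and $j$. Hence $\mathcal{M}^{(N)}(L_1;X,S_\ast)=\mathcal{M}^{(N)}(L_2;X,S_\ast)$.

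The main obstacle is not the functorial argument but pinning down the definition of $a_{ij}$. The paper describes stillborn classes informally, and a naive ``count of classes'' is ill-defined, since it could be infinite or basis dependent. The key step is choosing a rank-type quantity, like the one above, that matches the example (the triangle case gives $a_{11}=1$) and is visibly natural under filtered chain isomorphisms. Once that is fixed, invariance reduces to naturality exactly as in Theorem~\ref{homeq}.
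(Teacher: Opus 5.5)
Your proof is correct and follows essentially the same route as the paper: the filtered quiver isomorphism from Theorem~\ref{filtinv} induces chain isomorphisms that commute with the boundary maps, so they preserve kernels and images and hence the stillborn counts. The one difference is a refinement in your favor: the paper's proof only shows that the single-stage subspace $\ker(\partial_{i})\cap\operatorname{im}(\partial_{i+1})$ is preserved, which is just $\operatorname{im}(\partial_{i+1})$ and needs only stagewise isomorphisms, whereas your relative quotient by $\ker\partial_i^{(j-1)}\cap\operatorname{im}\partial_{i+1}^{(j)}$ actually encodes ``born at stage $j$'', agrees with the triangle example, and genuinely uses that $\Phi_j$ restricts to $\Phi_{j-1}$, which the paper states but does not exploit.
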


\begin{proof}
	Let $L_1$ and $L_2$ be two virtually equivalent links. By Theorem ~\ref{filtinv}, there is an isomorphism 
	$\varphi^i : \mathcal{Q}_X^{S_i}(L_1) \to \mathcal{Q}_X^{S_i}(L_2)$ for all $i$, 
	compatible with the inclusion maps. Since each $\varphi^i$ is a quiver isomorphism, it induces isomorphisms 
	$\varphi^i_j : C_j(\mathcal{Q}_X^{S_i}(L_1)) \to C_j(\mathcal{Q}_X^{S_i}(L_2))$ 
	between the chain groups, and these maps commute with the boundary maps, 
	as illustrated by the following diagram.
	\begin{center}
		\begin{tikzcd} C_{j+1}(\mathcal{Q}_X^{S_i}(L_1)) \arrow[r,"\partial_{j+1}"] \arrow[d,"\varphi^{i}_{j+1}"'] & C_j(\mathcal{Q}_X^{S_i}(L_1)) \arrow[r,"\partial_{j}"]\arrow[d,"\varphi^{i}_{j}"'] & C_{j-1}(\mathcal{Q}_X^{S_i}(L_1))\arrow[d,"\varphi^{i}_{j-1}"']\\ C_{j+1}(\mathcal{Q}_X^{S_i}(L_2)) \arrow[r,"\partial_{j+1}"] & C_{j}(\mathcal{Q}_X^{S_i}(L_2))\arrow[r,"\partial_{j}"]& C_{j-1}(\mathcal{Q}_X^{S_i}(L_2)) \end{tikzcd}
	\end{center}
	It follows that $\varphi^i_j$ induces isomorphisms between the kernels and images 
	of the boundary maps. In particular, the subspace 
	$\ker(\partial_j) \,\cap\, \operatorname{im}(\partial_{j+1})$ corresponding to stillborn 
	homology classes is preserved for all $j$. Hence, the number of stillborn homology classes at each dimension and at each stage 
	of the filtration is invariant under virtual equivalence.
\end{proof}

We now combine the information obtained from the persistence barcode and the stillborn matrix to define a stronger invariant of links.

\begin{definition}
	Let $L$ be a link, $X$ a finite biquandle, $S_\ast$ a filtration of End$(X)$ and $N\geq 1$. We define the \textit{persistence pair} of $L$ as the pair $(\mathcal{P}^{(N)}(L;X,S_\ast),\mathcal{M}^{(N)}(L;X,S_\ast))$ and denote it by $\mathcal{PM}^{(N)}(L;X,S_\ast)$.
\end{definition}

\begin{corollary}
	Let $X$ be a finite biquandle, $S_\ast$ a filtration of End$(X)$ and $N\geq 1$. Then, the persistence pair $\mathcal{PM}^{(N)}(L;X,S_\ast)$ is an invariant of links.
\end{corollary}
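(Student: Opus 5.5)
The persistence pair is a tuple of two invariants that have already been established, so the plan is to reduce the statement to those two results rather than reprove anything about colorings. Let $L_1$ and $L_2$ be virtually equivalent link diagrams. Fix $X$, the filtration $S_\ast$ and $N$ once and for all. Both components of $\mathcal{PM}^{(N)}(L;X,S_\ast)$ are then computed from the same data, namely the filtration $\mathcal{K}^{(N)}(\mathcal{Q}_X^{S_\ast}(L))$ of Proposition~\ref{subsub}.

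First, Corollary~\ref{barcodeinv} gives $\mathcal{P}^{(N)}(L_1;X,S_\ast)=\mathcal{P}^{(N)}(L_2;X,S_\ast)$ as multisets of persistence intervals. Second, Theorem~\ref{deadinv} gives $\mathcal{M}^{(N)}(L_1;X,S_\ast)=\mathcal{M}^{(N)}(L_2;X,S_\ast)$ entrywise. Equality of ordered pairs is componentwise equality, so
\[
\mathcal{PM}^{(N)}(L_1;X,S_\ast)=\mathcal{PM}^{(N)}(L_2;X,S_\ast).
\]

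For a more uniform argument, one could instead invoke Theorem~\ref{filtinv} directly. It supplies a single family of quiver isomorphisms $\varphi_i$ compatible with the inclusions. By Proposition~\ref{quivinv2} and Remark~\ref{func}, this family yields an isomorphism of filtered chain complexes. Any quantity read off functorially from a filtered chain complex is then preserved, including both barcodes and stillborn counts.

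No step is a real obstacle. The only point needing care is that both components are taken with respect to the same fixed $X$, $S_\ast$ and $N$, so that the two earlier invariance results apply simultaneously to the same filtration.
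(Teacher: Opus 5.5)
Your proposal is correct and takes essentially the paper's approach: the paper also deduces the corollary immediately from Corollary~\ref{barcodeinv} and Theorem~\ref{deadinv}, with invariance of the pair following from invariance of each component. Your alternative argument via a single isomorphism of filtered chain complexes, induced by Theorem~\ref{filtinv}, is also valid, but the corollary does not need it.
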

\begin{proof}
	The proof follows immediately from Corollary \ref{barcodeinv} (invariance of persistence barcode) and Theorem \ref{deadinv} (invariance of stillborn matrix).
\end{proof}

\begin{remark}
	The construction of the persistence pair invariant $\mathcal{PM}^{(N)}(L;X,S_\ast)$ does not depend essentially on the use of $N$-directed clique homology. The role of $N$-directed clique homology in the above construction can be replaced by any homology theory $\mathcal{H}_\ast$ of quivers that is functorial with respect to inclusions of quivers. Then, applying the same construction in this section, one obtains the corresponding persistent $\mathcal{H}_\ast$-homology groups, persistence barcodes, and stillborn matrices, and the invariance results extend to this setting.
\end{remark}

\begin{figure}
	\centering
	\includegraphics[width=0.6\linewidth]{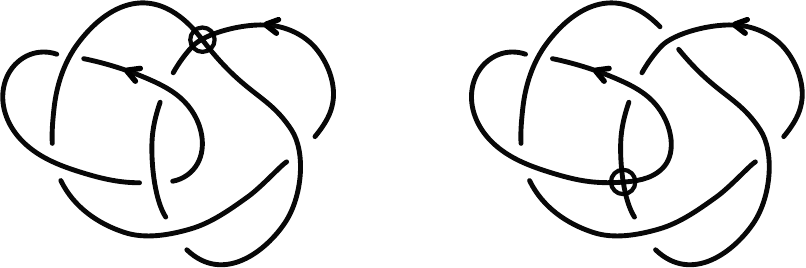}
	\vspace{0.3 cm}
	\caption{Two links $L_1$ (left-hand side) and $L_2$ (right-hand side) related to Example~\ref{ex3}}
	\label{vlink3}
\end{figure}

\begin{example}
	\label{ex3}
	Consider the link diagrams $L_1$ and $L_2$ in Figure~\ref{vlink3}, and the biquandle $\mathbb{Z}_6$ with the operation matrix
	\[\left[\begin{array}{c c c c c c | c c c c c c}
		1& 3& 2& 3& 1& 2& 1& 1& 1& 1& 1& 1\\
		3& 2& 1& 1& 2& 3& 2& 2& 2& 2& 2& 2\\
		2& 1& 3& 2& 3& 1& 3& 3& 3& 3& 3& 3\\
		6& 6& 6& 4& 4& 4& 4& 4& 4& 4& 4& 4\\
		5& 5& 5& 5& 5& 5& 5& 5& 5& 5& 5& 5\\
		4& 4& 4& 6& 6& 6& 6& 6& 6& 6& 6& 6\\
		
	\end{array}\right].\]
	Let $N=1$ and $S_\ast=\{S_i\}_{i=0,...,3}$ be a filtration of $\text{End}(\mathbb{Z}_6)$ where,
	\begin{align*}
		S_0&=\emptyset,\\
		S_1&=\{[6,6,6,6,6,6]\},\\
		S_2&=\{[6,6,6,6,6,6],[6,6,6,6,4,6]\},\\
		S_3&=\{[6,6,6,6,6,6],[6,6,6,6,4,6],[3,1,2,4,5,6],[4,4,4,5,6,5]\}.
	\end{align*}
	Our Python computations show that the number of $\mathbb{Z}_6$-colorings of the links $L_1$ and $L_2$ are equal: $\Phi_{\mathbb{Z}_6}^{\mathbb{Z}}(L_1)= 24=\Phi_{\mathbb{Z}_6}^{\mathbb{Z}}(L_2)$. Visualizations of the persistence pairs $\mathcal{PM}^{(1)}(L_1;\mathbb{Z}_6,S_*)$ and $\mathcal{PM}^{(1)}(L_2;\mathbb{Z}_6,S_*)$ as the persistence barcodes and the corresponding stillborn matrices are shown in Figure~\ref{pdiagrams2}. Observe that the persistence barcodes are identical. However, the stillborn matrices differ in the $(1,2)$- and $(2,3)$-entries, distinguishing these two links.
	
	\begin{figure}
		\centering    
		\begin{subfigure}{0.5\textwidth}
			\centering
			\begin{overpic}[width=0.9\linewidth]{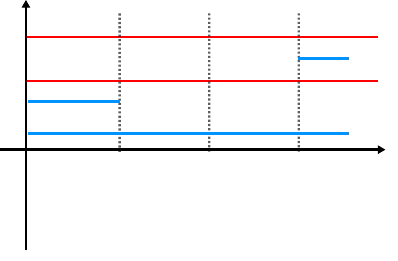}
				\put(-6,34){\small $H_0^{(1)}$}
				\put(-6,49){\small $H_2^{(1)}$}
				\put(90,49){\small$2$}
				\put(32,38){\small$23$}
				\put(90,30.5){\small$1$}
				\put(1.5,22){\small $0$}
				\put(29,22){\small $1$}
				\put(51.5,22){\small $2$}
				\put(74,22){\small $3$}
				\put(14,10){\footnotesize$\left[\begin{array}{c c c c}
						0& 0& 0& 0\\
						0& 0& 11& 38\\
						0& 0& 0& 21
					\end{array}\right]$}
			\end{overpic}
			\caption{$\mathcal{PM}^{(1)}(L_1;\mathbb{Z}_6,S_*)$}
			\vspace{0.3 cm}
		\end{subfigure}\hfill
		\begin{subfigure}{0.5\textwidth}
			\centering
			\begin{overpic}[width=0.9\linewidth]{persinv.pdf}
				\put(-6,34){\small $H_0^{(1)}$}
				\put(-6,49){\small $H_2^{(1)}$}
				\put(90,49){\small$2$}
				\put(32,38){\small$23$}
				\put(90,30.5){\small$1$}
				\put(1.5,22){\small $0$}
				\put(29,22){\small $1$}
				\put(51.5,22){\small $2$}
				\put(74,22){\small $3$}
				\put(14,10){\footnotesize$\left[\begin{array}{c c c c}
						0& 0& 0& 0\\
						0& 0& 17& 38\\
						0& 0& 0& 27
					\end{array}\right]$}
			\end{overpic}
			\caption{$\mathcal{PM}^{(1)}(L_2;\mathbb{Z}_6,S_*)$}
			\vspace{0.3 cm}
		\end{subfigure}
		\caption{Visualizations of the persistence pair invariant values of the links in Example~\ref{ex3}, showing the persistence barcodes and the corresponding stillborn matrices.}
		\label{pdiagrams2}
	\end{figure}
\end{example}

Example~\ref{ex3} shows that there exist links which are distinguished by neither the biquandle counting invariant nor the persistence barcode invariant, but the persistence pair invariant is able to distinguish these two links. Hence, the persistence pair invariant is a proper enhancement of the persistence barcode invariant of links.

\begin{example}
	Consider the biquandle $\mathbb{Z}_5$ with the operation matrix
	\[\left[\begin{array}{c c c c c | c c c c c}
		3& 4& 1& 4& 4& 3& 4& 3& 3& 4\\
		2& 5& 2& 2& 5& 5& 5& 5& 5& 5\\
		1& 1& 4& 3& 1& 4& 1& 4& 4& 1\\
		4& 3& 3& 1& 3& 1& 3& 1& 1& 3\\
		5& 2& 5& 5& 2& 2& 2& 2& 2& 2
	\end{array}\right].\]
	Let $N=1$ and $S_\ast=\{S_i\}_{i=0,...,4}$ a filtration of $\text{End}(\mathbb{Z}_6)$ where $S_0=\emptyset$, and 
	\[            S_1=\{[3,2,4,1,5]\},\;
	S_2=S_1\cup\{[4,2,1,3,5]\},\;
	S_3=S_2\cup\{[3,5,4,1,2]\},\;
	S_4=S_3\cup\{[4,5,1,3,2]\}.
	\]
	We use our Python code to compute persistent pair invariant values of some links and we present the results in Table~\ref{table3}. The naming conventions and the sources of these link diagrams are mentioned in Appendix~\ref{apx}.
	
	Observe that the links 2.3.1 and 2.3.9 in Table~\ref{table3} are not distinguished by the stillborn matrix invariant $\mathcal{M}^{(1)}(L;\mathbb{Z}_5,S_\ast)$ but the persistence pair invariant $\mathcal{PM}^{(1)}(L;\mathbb{Z}_5,S_\ast)$ is able to distinguish these two links. Hence, the persistence pair invariant is a proper enhancement of the stillborn matrix invariant, as well. Observe also that the persistence barcodes of these two links do not only differ by the multiplicities of the persistence intervals: the persistence interval $[2,\infty)$ occurs in $H_1^{(1)}$ of $\mathcal{P}^{(1)}(2.3.9;\mathbb{Z}_5,S_\ast)$ but it is absent in $H_1^{(1)}$ of $\mathcal{P}^{(1)}(2.3.1;\mathbb{Z}_5,S_\ast)$.
\end{example}
\begin{table}
	\centering
	\setlength{\tabcolsep}{3pt}
	\renewcommand{\arraystretch}{0.8}
	
	\begin{tabular}{|>{\centering\arraybackslash}m{0.16\textwidth}|
			>{\centering\arraybackslash}m{0.34\textwidth}||>{\centering\arraybackslash}m{0.16\textwidth}|>{\centering\arraybackslash}m{0.34\textwidth}|}
		\hline
		\vspace{0.1 cm}\text{\small $L$} \vspace{0.05 cm}& \vspace{0.1 cm}\text{\small $\mathcal{PM}^{(1)}(L;\mathbb{Z}_5,S_\ast)$}\vspace{0.05 cm} & \vspace{0.1 cm}\text{\small $L$} \vspace{0.05 cm}& \vspace{0.1 cm}\text{\small $\mathcal{PM}^{(1)}(L;\mathbb{Z}_5,S_\ast)$}\vspace{0.05 cm}
		\tabularnewline
		\hline
		
		\begin{overpic}[width=\linewidth]{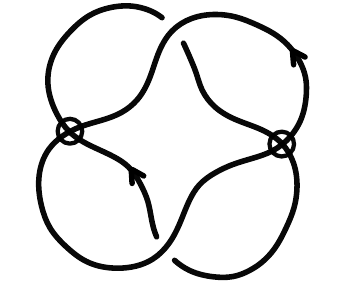}
			\put(35,-25){\footnotesize 2.2.1}
		\end{overpic}
		&
		\begin{overpic}[width=\linewidth]{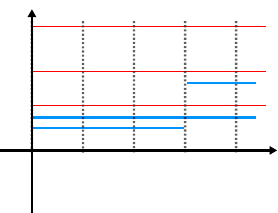}
			\put(1,30){\scalebox{0.65}{$H_0^{(1)}$} }
			\put(1,44){\scalebox{0.65}{$H_1^{(1)}$} }
			\put(1,59){\scalebox{0.65}{$H_2^{(1)}$} }
			
			\put(68.5,28.7){\scalebox{0.6}{$2$}}
			\put(93.5,33){\scalebox{0.6}{$2$}}
			\put(93.5,45.3){\scalebox{0.6}{$2$}}

			\put(7,17){\scalebox{0.6}{$0$}}
			\put(29,17){\scalebox{0.6}{$1$}}
			\put(48,17){\scalebox{0.6}{$2$}}
			\put(66,17){\scalebox{0.6}{$3$}}
			\put(84,17){\scalebox{0.6}{$4$}}
			\put(14,7){\scalebox{0.62}{
					$\left[\begin{array}{ccccc}
						0 & 0 & 0 & 0 & 0\\
						0 & 0 & 0 & 0 & 0\\
						0 & 0 & 0 & 0 & 0
					\end{array}\right]$
			}}
		\end{overpic}
		&
		\begin{overpic}[width=\linewidth]{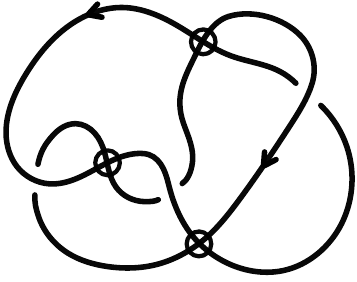}
			\put(35,-25){\footnotesize 2.3.1}
		\end{overpic}
		&
		\begin{overpic}[width=\linewidth]{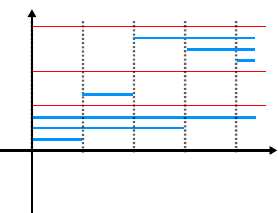}
			\put(1,30){\scalebox{0.65}{$H_0^{(1)}$} }
			\put(1,44){\scalebox{0.65}{$H_1^{(1)}$} }
			\put(1,59){\scalebox{0.65}{$H_2^{(1)}$} }
			
			\put(31.5,25){\scalebox{0.6}{$4$}}
			\put(68.5,28.7){\scalebox{0.6}{$1$}}
			\put(93.5,33){\scalebox{0.6}{$1$}}
			\put(50,41.5){\scalebox{0.6}{$2$}}
			\put(93.5,53.2){\scalebox{0.6}{$24$}}
			\put(93.5,57.5){\scalebox{0.6}{$1$}}
			\put(93.5,62.2){\scalebox{0.6}{$4$}}
			
			\put(7,17){\scalebox{0.6}{$0$}}
			\put(29,17){\scalebox{0.6}{$1$}}
			\put(48,17){\scalebox{0.6}{$2$}}
			\put(66,17){\scalebox{0.6}{$3$}}
			\put(84,17){\scalebox{0.6}{$4$}}
			\put(14,7){\scalebox{0.62}{
					$\left[\begin{array}{ccccc}
						0 & 0 & 0 & 0 & 0\\
						0 & 0 & 6 & 5 & 6\\
						0 & 0 & 0 & 0 & 0
					\end{array}\right]$
			}}
		\end{overpic}
		\tabularnewline
		\hline
		\begin{overpic}[width=\linewidth]{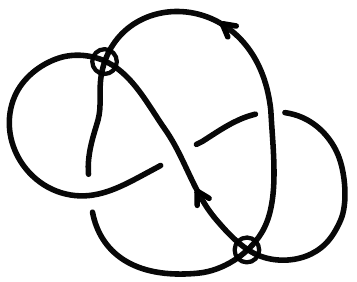}
			\put(35,-25){\footnotesize 2.3.2}
		\end{overpic}
		&
		\begin{overpic}[width=\linewidth]{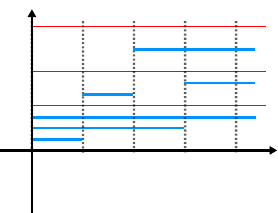}
			\put(1,30){\scalebox{0.65}{$H_0^{(1)}$} }
			\put(1,44){\scalebox{0.65}{$H_1^{(1)}$} }
			\put(1,59){\scalebox{0.65}{$H_2^{(1)}$} }
			
			\put(31.5,25){\scalebox{0.6}{$2$}}
			\put(68.5,28.7){\scalebox{0.6}{$2$}}
			\put(93.5,33){\scalebox{0.6}{$3$}}
			\put(50,41.5){\scalebox{0.6}{$1$}}
			\put(93.5,45){\scalebox{0.6}{$2$}}
			\put(93.5,57.5){\scalebox{0.6}{$2$}}
			
			\put(7,17){\scalebox{0.6}{$0$}}
			\put(29,17){\scalebox{0.6}{$1$}}
			\put(48,17){\scalebox{0.6}{$2$}}
			\put(66,17){\scalebox{0.6}{$3$}}
			\put(84,17){\scalebox{0.6}{$4$}}
			\put(14,7){\scalebox{0.62}{
					$\left[\begin{array}{ccccc}
						0 & 0 & 0 & 0 & 0\\
						0 & 0 & 3 & 0 & 0\\
						0 & 0 & 0 & 0 & 0
					\end{array}\right]$
			}}
		\end{overpic}
		&
		\begin{overpic}[width=\linewidth]{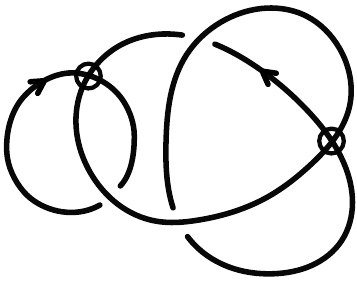}
			\put(35,-25){\footnotesize 2.3.6}
		\end{overpic}
		&
		\begin{overpic}[width=\linewidth]{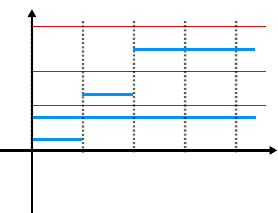}
			\put(1,30){\scalebox{0.65}{$H_0^{(1)}$} }
			\put(1,44){\scalebox{0.65}{$H_1^{(1)}$} }
			\put(1,59){\scalebox{0.65}{$H_2^{(1)}$} }
			
			\put(31.5,25){\scalebox{0.6}{$2$}}
			\put(93.5,33){\scalebox{0.6}{$1$}}
			\put(50,41.5){\scalebox{0.6}{$1$}}
			\put(93.5,57.5){\scalebox{0.6}{$2$}}
			
			\put(7,17){\scalebox{0.6}{$0$}}
			\put(29,17){\scalebox{0.6}{$1$}}
			\put(48,17){\scalebox{0.6}{$2$}}
			\put(66,17){\scalebox{0.6}{$3$}}
			\put(84,17){\scalebox{0.6}{$4$}}
			\put(14,7){\scalebox{0.62}{
					$\left[\begin{array}{ccccc}
						0 & 0 & 0 & 0 & 0\\
						0 & 0 & 3 & 0 & 0\\
						0 & 0 & 0 & 0 & 0
					\end{array}\right]$
			}}
		\end{overpic}
		\tabularnewline
		\hline
		\begin{overpic}[width=\linewidth]{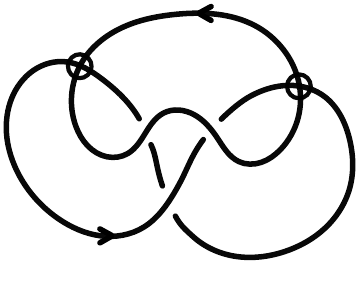}
			\put(35,-25){\footnotesize 2.3.9}
		\end{overpic}
		&
		\begin{overpic}[width=\linewidth]{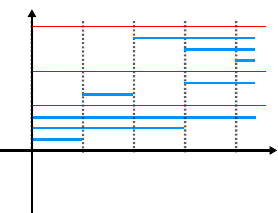}
			\put(1,30){\scalebox{0.65}{$H_0^{(1)}$} }
			\put(1,44){\scalebox{0.65}{$H_1^{(1)}$} }
			\put(1,59){\scalebox{0.65}{$H_2^{(1)}$} }
			
			\put(31.5,25){\scalebox{0.6}{$4$}}
			\put(68.5,28.7){\scalebox{0.6}{$3$}}
			\put(93.5,33){\scalebox{0.6}{$3$}}
			\put(50,41.5){\scalebox{0.6}{$2$}}
			\put(93.5,45){\scalebox{0.6}{$2$}}
			\put(93.5,53.2){\scalebox{0.6}{$24$}}
			\put(93.5,57.5){\scalebox{0.6}{$1$}}
			\put(93.5,62.2){\scalebox{0.6}{$4$}}
			
			\put(7,17){\scalebox{0.6}{$0$}}
			\put(29,17){\scalebox{0.6}{$1$}}
			\put(48,17){\scalebox{0.6}{$2$}}
			\put(66,17){\scalebox{0.6}{$3$}}
			\put(84,17){\scalebox{0.6}{$4$}}
			\put(14,7){\scalebox{0.62}{
					$\left[\begin{array}{ccccc}
						0 & 0 & 0 & 0 & 0\\
						0 & 0 & 6 & 5 & 6\\
						0 & 0 & 0 & 0 & 0
					\end{array}\right]$
			}}
		\end{overpic}
		&
		\begin{overpic}[width=\linewidth]{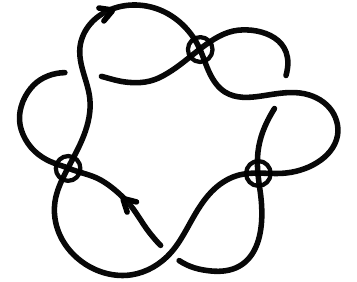}
			\put(35,-25){\footnotesize 2.3.10}
		\end{overpic}
		&
		\begin{overpic}[width=\linewidth]{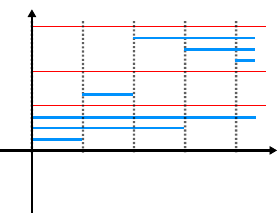}
			\put(1,30){\scalebox{0.65}{$H_0^{(1)}$} }
			\put(1,44){\scalebox{0.65}{$H_1^{(1)}$} }
			\put(1,59){\scalebox{0.65}{$H_2^{(1)}$} }
			
			\put(31.5,25){\scalebox{0.6}{$6$}}
			\put(68.5,28.7){\scalebox{0.6}{$1$}}
			\put(93.5,33){\scalebox{0.6}{$2$}}
			\put(50,41.5){\scalebox{0.6}{$3$}}
			\put(93.5,53.2){\scalebox{0.6}{$24$}}
			\put(93.5,57.5){\scalebox{0.6}{$1$}}
			\put(93.5,62.2){\scalebox{0.6}{$6$}}
			
			\put(7,17){\scalebox{0.6}{$0$}}
			\put(29,17){\scalebox{0.6}{$1$}}
			\put(48,17){\scalebox{0.6}{$2$}}
			\put(66,17){\scalebox{0.6}{$3$}}
			\put(84,17){\scalebox{0.6}{$4$}}
			\put(14,7){\scalebox{0.62}{
					$\left[\begin{array}{ccccc}
						0 & 0 & 0 & 0 & 0\\
						0 & 0 & 9 & 5 & 6\\
						0 & 0 & 0 & 0 & 0
					\end{array}\right]$
			}}
		\end{overpic}
		\tabularnewline
		\hline
		\begin{overpic}[width=\linewidth]{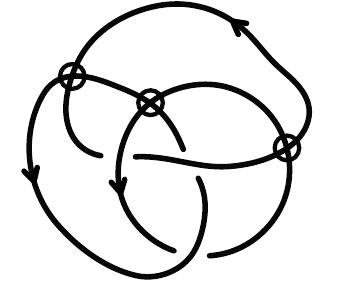}
			\put(35,-25){\footnotesize 3.3.0}
		\end{overpic}
		&
		\begin{overpic}[width=\linewidth]{b232-l6a40-810.pdf}
			\put(1,30){\scalebox{0.65}{$H_0^{(1)}$} }
			\put(1,44){\scalebox{0.65}{$H_1^{(1)}$} }
			\put(1,59){\scalebox{0.65}{$H_2^{(1)}$} }
			
			\put(31.5,25){\scalebox{0.6}{$2$}}
			\put(68.5,28.7){\scalebox{0.6}{$4$}}
			\put(93.5,33){\scalebox{0.6}{$5$}}
			\put(50,41.5){\scalebox{0.6}{$1$}}
			\put(93.5,45){\scalebox{0.6}{$4$}}
			\put(93.5,57.5){\scalebox{0.6}{$2$}}
			
			\put(7,17){\scalebox{0.6}{$0$}}
			\put(29,17){\scalebox{0.6}{$1$}}
			\put(48,17){\scalebox{0.6}{$2$}}
			\put(66,17){\scalebox{0.6}{$3$}}
			\put(84,17){\scalebox{0.6}{$4$}}
			\put(14,7){\scalebox{0.62}{
					$\left[\begin{array}{ccccc}
						0 & 0 & 0 & 0 & 0\\
						0 & 0 & 3 & 0 & 0\\
						0 & 0 & 0 & 0 & 0
					\end{array}\right]$
			}}
		\end{overpic}
		&
		\begin{overpic}[width=\linewidth]{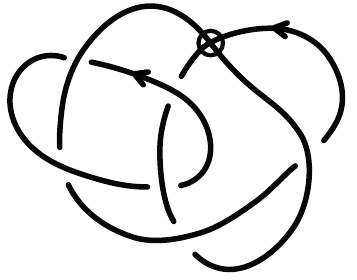}
			\put(35,-25){\footnotesize 2.6.0}
		\end{overpic}
		&
		\begin{overpic}[width=\linewidth]{b239-l7a31-l7a70.pdf}
			\put(1,30){\scalebox{0.65}{$H_0^{(1)}$} }
			\put(1,44){\scalebox{0.65}{$H_1^{(1)}$} }
			\put(1,59){\scalebox{0.65}{$H_2^{(1)}$} }
			
			\put(31.5,25){\scalebox{0.6}{$6$}}
			\put(68.5,28.7){\scalebox{0.6}{$3$}}
			\put(93.5,33){\scalebox{0.6}{$4$}}
			\put(50,41.5){\scalebox{0.6}{$3$}}
			\put(93.5,45){\scalebox{0.6}{$2$}}
			\put(93.5,53.2){\scalebox{0.6}{$24$}}
			\put(93.5,57.5){\scalebox{0.6}{$1$}}
			\put(93.5,62.2){\scalebox{0.6}{$6$}}
			
			\put(7,17){\scalebox{0.6}{$0$}}
			\put(29,17){\scalebox{0.6}{$1$}}
			\put(48,17){\scalebox{0.6}{$2$}}
			\put(66,17){\scalebox{0.6}{$3$}}
			\put(84,17){\scalebox{0.6}{$4$}}
			\put(14,7){\scalebox{0.62}{
					$\left[\begin{array}{ccccc}
						0 & 0 & 0 & 0 & 0\\
						0 & 0 & 9 & 5 & 6\\
						0 & 0 & 0 & 0 & 0
					\end{array}\right]$
			}}
		\end{overpic}
		\tabularnewline
		\hline
		\begin{overpic}[width=\linewidth]{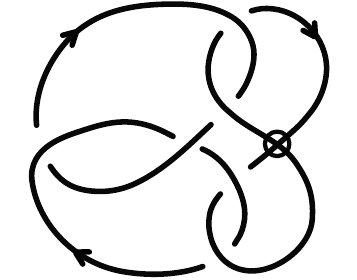}
			\put(35,-25){\footnotesize 3.6.0}
		\end{overpic}
		&
		\begin{overpic}[width=\linewidth]{b239-l7a31-l7a70.pdf}
			\put(1,30){\scalebox{0.65}{$H_0^{(1)}$} }
			\put(1,44){\scalebox{0.65}{$H_1^{(1)}$} }
			\put(1,59){\scalebox{0.65}{$H_2^{(1)}$} }
			
			\put(31.5,25){\scalebox{0.6}{$12$}}
			\put(68.5,28.7){\scalebox{0.6}{$7$}}
			\put(93.5,33){\scalebox{0.6}{$7$}}
			\put(50,41.5){\scalebox{0.6}{$6$}}
			\put(93.5,45){\scalebox{0.6}{$4$}}
			\put(93.5,53.2){\scalebox{0.6}{$72$}}
			\put(93.5,57.8){\scalebox{0.6}{$3$}}
			\put(93.5,62.2){\scalebox{0.6}{$12$}}
			
			\put(7,17){\scalebox{0.6}{$0$}}
			\put(29,17){\scalebox{0.6}{$1$}}
			\put(48,17){\scalebox{0.6}{$2$}}
			\put(66,17){\scalebox{0.6}{$3$}}
			\put(84,17){\scalebox{0.6}{$4$}}
			\put(14,7){\scalebox{0.62}{
					$\left[\begin{array}{ccccc}
						0 & 0 & 0 & 0 & 0\\
						0 & 0 & 18 & 15 & 18\\
						0 & 0 & 0 & 0 & 0
					\end{array}\right]$
			}}
		\end{overpic}
		&
		\begin{overpic}[width=\linewidth]{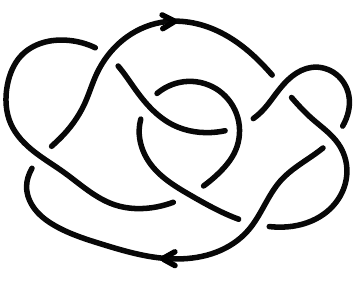}
			\put(35,-25){\footnotesize 1.8.0}
		\end{overpic}
		&
		\begin{overpic}[width=\linewidth]{b232-l6a40-810.pdf}
			\put(1,30){\scalebox{0.65}{$H_0^{(1)}$} }
			\put(1,44){\scalebox{0.65}{$H_1^{(1)}$} }
			\put(1,59){\scalebox{0.65}{$H_2^{(1)}$} }
			
			\put(31.5,25){\scalebox{0.6}{$6$}}
			\put(68.5,28.7){\scalebox{0.6}{$1$}}
			\put(93.5,33){\scalebox{0.6}{$4$}}
			\put(50,41.5){\scalebox{0.6}{$3$}}
			\put(93.5,45){\scalebox{0.6}{$1$}}
			\put(93.5,57.5){\scalebox{0.6}{$6$}}
			
			\put(7,17){\scalebox{0.6}{$0$}}
			\put(29,17){\scalebox{0.6}{$1$}}
			\put(48,17){\scalebox{0.6}{$2$}}
			\put(66,17){\scalebox{0.6}{$3$}}
			\put(84,17){\scalebox{0.6}{$4$}}
			\put(14,7){\scalebox{0.62}{
					$\left[\begin{array}{ccccc}
						0 & 0 & 0 & 0 & 0\\
						0 & 0 & 9 & 0 & 0\\
						0 & 0 & 0 & 0 & 0
					\end{array}\right]$
			}}
		\end{overpic}
		\tabularnewline
		\hline

	\end{tabular}
	\vspace{0.3 cm}
	\caption{Examples of link diagrams and their persistence pair invariant values.}
	\label{table3}
\end{table}
\section{Discussion}

The constructions in Section~\ref{mainsection1} suggest directions for further study. While in this paper we utilize N-directed clique homology, one may replace it with other suitable homology theories of quivers to obtain alternative persistence invariants of biquandle coloring quivers. In this sense, the choice of homology theory in the persistence construction in this paper can be regarded as an additional parameter. A comparison of the resulting invariants may provide insight into their relative strength in distinguishing links.

In particular, it would be interesting to investigate how different choices of homology theory affect the resulting persistence barcodes and stillborn homology matrices. Moreover, different homology theories may also provide varying levels of computational efficiency, which could play an important role in practical applications.

Another natural direction for future work is to apply $N$-directed clique homology, filtrations of biquandle coloring quivers, and the resulting persistent approach to classical and virtual knotoids (see \cite{gugumcu2017new, turaev2012knotoids}). It is expected that this application yields new and strong invariants for classical and virtual knotoids as well.

\section{Appendix}
	\label{apx}
	The computations of the invariants in this paper (especially persistence pair invariant) are hard to perform by hand. Thus, we use the Python codes we developed for the computations in the paper. The code itself is lengthy to include in the body of the paper. Hence, the code is uploaded to the online repository GitHub. The implementation we used to obtain the results in the paper, including the dataset of labeled peer codes of links and biquandles, is publicly available at  \footnote{\url{https://github.com/usot-lab/biquandle-coloring-quivers}}.
	
	We use the virtual link table of Bartholomew\footnote{A.~Bartholomew, \emph{A table of virtual links} (2022), available at \url{https://www.layer8.co.uk/maths/virtual-links/index.htm}.} and the Thistlethwaite link table\footnote{\emph{The Thistlethwaite Link Table}, available at \url{https://katlas.org/wiki/The_Thistlethwaite_Link_Table}} in Example~\ref{ex2} and Table~\ref{table3}. In particular, both of the links in Example~\ref{ex2} and the first six links in Table~\ref{table3} are from Bartholomew's table. In Bartholomew's table, virtual links are studied using their \textit{labeled peer codes}\footnote{A.~Bartholomew, \emph{The Labeled Peer Code For Knot and Link Diagrams} (2022), available at \url{https://www.layer8.co.uk/maths/download/labelled-peer-code.pdf}}. We name the link diagrams the same as in Bartholomew's table. We obtain the links 3.3.0 and 3.6.0 by switching some of the classical crossings of links in Thistlethwaite link table into virtual crossings. We used the same naming convention of Bartholomew's for the resulting link diagrams.
	
	The Python code consists of the following main files. 
	\begin{itemize}
		\item \textbf{dataset.py}\\
		This file includes data of labeled peer codes of links and biquandles, some of which we use in our examples in the body of the paper.
		\item \textbf{functions.py}\\
		This file consists of computational routines that we use to implement biquandle counting invariants, biquandle coloring quivers and homology computations. There are short explanations in each routine explaining its purpose and usage.
		\item \textbf{main.py}\\
		This file acts as an interface where we run computations in the command-line interface by specifying the necessary choices. In particular, we specify a link label and a biquandle to compute biquandle counting invariants or adjacency matrices of biquandle coloring quivers. We specify the choice for $N$, a subset $S$ of endomorphism set or a filtration $S_\ast$ of biquandle endomorphisms to compute homological invariants.
	\end{itemize}
	
	There is also a README file in the GitHub repository, which includes instructions for running the code. In particular, the commands to reproduce the results in the examples of the paper are provided in the file. As an example, Figure \ref{calcc} demonstrates computation results for the link 2.3.1 in Table~\ref{table3}.
	
	\begin{figure}
		\centering
		\includegraphics[width=1\linewidth]{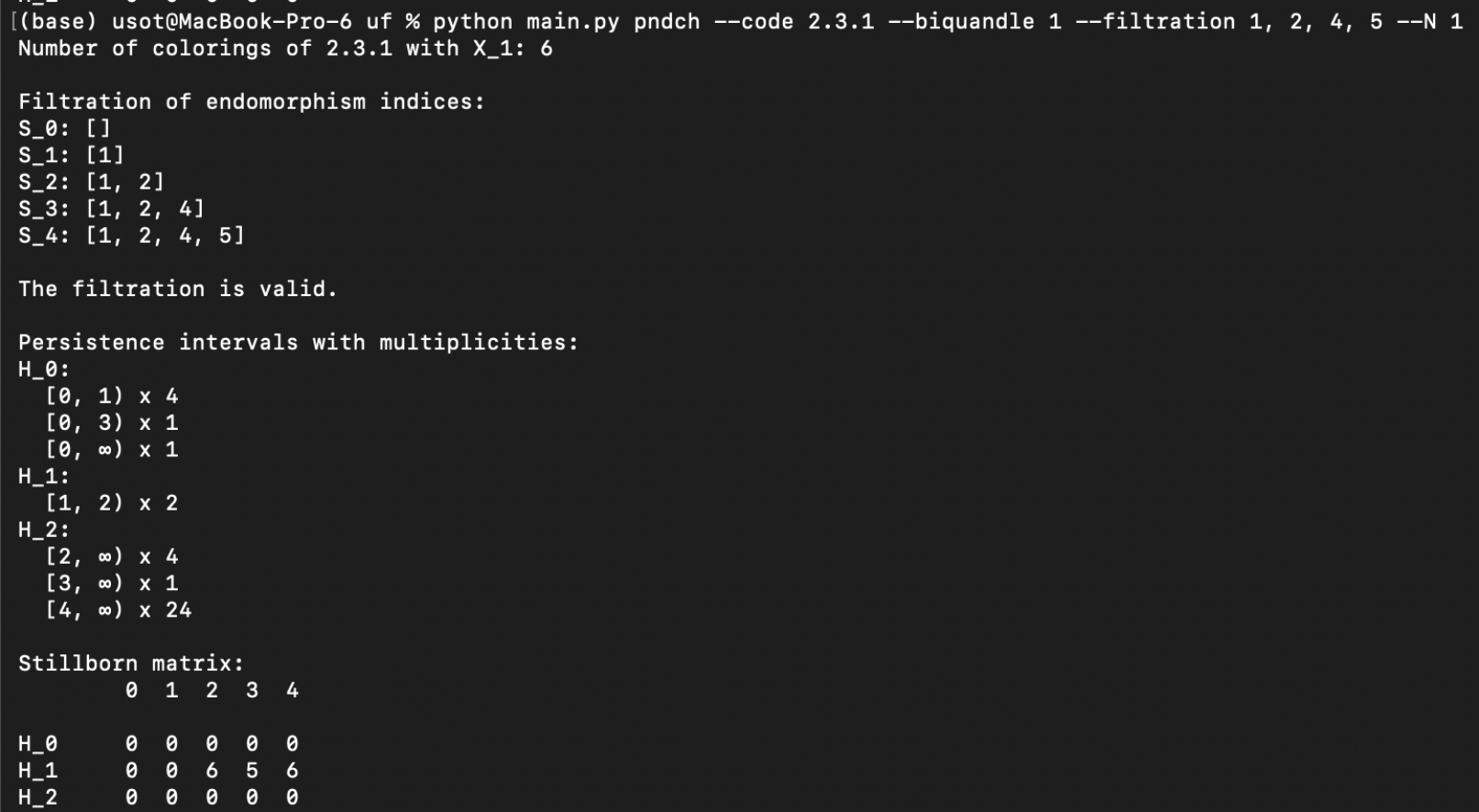}
		\vspace{0.2 cm}
		\caption{Persistence pair invariant calculation results for the link 2.3.1 with the Python code.}
		\label{calcc}
	\end{figure}
	
	\begin{remark}
		In Python, rows and columns of an $n\times n$ square matrix are indexed by the set $\{0, 1, ..., n-1\}$. However, in standard matrix convention, the index set is $\{1, ..., n\}$. Thus, operation matrices of biquandles differ by a shift in the paper and in the codes. A routine that converts a Python matrix to a standard matrix is provided in the \textbf{functions.py} file.
	\end{remark}

\clearpage

\bibliographystyle{plain}
\bibliography{references}

@article{aktas2019persistence,
	title={Persistence homology of networks: methods and applications},
	author={Aktas, Mehmet E and Akbas, Esra and Fatmaoui, Ahmed El},
	journal={Applied Network Science},
	volume={4},
	number={1},
	pages={1--28},
	year={2019},
	publisher={Springer}
}

@article{caputi2024hochschild,
	author = {Caputi, Luigi and Riihim{\"a}ki, Henri},
	title = {Hochschild homology, and a persistent approach via connectivity digraphs},
	fjournal = {Journal of Applied and Computational Topology},
	journal = {J. Appl. Comput. Topol.},
	issn = {2367-1726},
	volume = {8},
	number = {5},
	pages = {1121--1170},
	year = {2024},
	language = {English},
	doi = {10.1007/s41468-023-00118-9},
	zbMATH = {7955445},
	Zbl = {1551.05151}
}

@article{carlsson2009topology,
	author = {Carlsson, Gunnar},
	title = {Topology and data},
	fjournal = {Bulletin of the American Mathematical Society. New Series},
	journal = {Bull. Am. Math. Soc., New Ser.},
	issn = {0273-0979},
	volume = {46},
	number = {2},
	pages = {255--308},
	year = {2009},
	language = {English},
	doi = {10.1090/S0273-0979-09-01249-X},
	zbMATH = {5545159},
	Zbl = {1172.62002}
}

@article{carlsson2010zigzag,
	author = {Carlsson, Gunnar and de Silva, Vin},
	title = {Zigzag persistence},
	fjournal = {Foundations of Computational Mathematics},
	journal = {Found. Comput. Math.},
	issn = {1615-3375},
	volume = {10},
	number = {4},
	pages = {367--405},
	year = {2010},
	language = {English},
	doi = {10.1007/s10208-010-9066-0},
	zbMATH = {5796341},
	Zbl = {1204.68242}
}

@article{carstens2013persistent,
	author = {Carstens, C. J. and Horadam, K. J.},
	title = {Persistent homology of collaboration networks},
	fjournal = {Mathematical Problems in Engineering},
	journal = {Math. Probl. Eng.},
	issn = {1024-123X},
	volume = {2013},
	pages = {7},
	note = {Id/No 815035},
	year = {2013},
	language = {English},
	doi = {10.1155/2013/815035},
	zbMATH = {6372664},
	Zbl = {1299.91104}
}

@article{ceniceros2023psyquandle,
	author = {Ceniceros, Jose and Christiana, Anthony and Nelson, Sam},
	title = {Psyquandle coloring quivers},
	fjournal = {Journal of Knot Theory and its Ramifications},
	journal = {J. Knot Theory Ramifications},
	issn = {0218-2165},
	volume = {32},
	number = {11},
	pages = {18},
	note = {Id/No 2350073},
	year = {2023},
	language = {English},
	doi = {10.1142/S0218216523500736},
	zbMATH = {7782615},
	Zbl = {1530.57007}
}

@book{chazal2016structure,
	author    = {Chazal, Fr{\'e}d{\'e}ric and de Silva, Vin and Glisse, Marc and Oudot, Steve},
	title     = {The Structure and Stability of Persistence Modules},
	series    = {SpringerBriefs in Mathematics},
	year      = {2016},
	publisher = {Springer},
	address   = {Cham},
	doi       = {10.1007/978-3-319-42545-0}
}

@article{cho2019quandle,
	author = {Cho, Karina and Nelson, Sam},
	title = {Quandle coloring quivers},
	fjournal = {Journal of Knot Theory and its Ramifications},
	journal = {J. Knot Theory Ramifications},
	issn = {0218-2165},
	volume = {28},
	number = {1},
	pages = {12},
	note = {Id/No 1950001},
	year = {2019},
	language = {English},
	doi = {10.1142/S0218216519500019},
	zbMATH = {7023792},
	Zbl = {1420.57032}
}

@inproceedings{chowdhury2018persistent,
	title={Persistent path homology of directed networks},
	author={Chowdhury, Samir and M{\'e}moli, Facundo},
	booktitle={Proceedings of the twenty-ninth annual ACM-SIAM symposium on discrete algorithms},
	pages={1152--1169},
	year={2018},
	organization={SIAM}
}

@article{cohen2007stability,
	author = {Cohen-Steiner, David and Edelsbrunner, Herbert and Harer, John},
	title = {Stability of persistence diagrams},
	fjournal = {Discrete \& Computational Geometry},
	journal = {Discrete Comput. Geom.},
	issn = {0179-5376},
	volume = {37},
	number = {1},
	pages = {103--120},
	year = {2007},
	language = {English},
	doi = {10.1007/s00454-006-1276-5},
	zbMATH = {5126703},
	Zbl = {1117.54027}
}

@article{dey2022efficient,
	author = {Dey, Tamal K. and Li, Tianqi and Wang, Yusu},
	title = {An efficient algorithm for 1-dimensional (persistent) path homology},
	fjournal = {Discrete \& Computational Geometry},
	journal = {Discrete Comput. Geom.},
	issn = {0179-5376},
	volume = {68},
	number = {4},
	pages = {1102--1132},
	year = {2022},
	language = {English},
	doi = {10.1007/s00454-022-00430-8},
	zbMATH = {7628957},
	Zbl = {1547.68768}
}

@article{edelsbrunner2002topological,
	author = {Edelsbrunner, Herbert and Letscher, David and Zomorodian, Afra},
	title = {Topological persistence and simplification},
	fjournal = {Discrete \& Computational Geometry},
	journal = {Discrete Comput. Geom.},
	issn = {0179-5376},
	volume = {28},
	number = {4},
	pages = {511--533},
	year = {2002},
	language = {English},
	doi = {10.1007/s00454-002-2885-2},
	zbMATH = {1883340},
	Zbl = {1011.68152}
}

@article{edelsbrunner2008persistent,
	title={Persistent homology-a survey},
	author = {Edelsbrunner, Herbert and Harer, John},
	journal={Contemporary mathematics},
	volume={453},
	number={26},
	pages={257--282},
	year={2008},
	publisher={Providence, RI: American Mathematical Society}
}

@article{fenn2004biquandles,
	author = {Fenn, Roger and Jordan-Santana, Mercedes and Kauffman, Louis},
	title = {Biquandles and virtual links},
	fjournal = {Topology and its Applications},
	journal = {Topology Appl.},
	issn = {0166-8641},
	volume = {145},
	number = {1-3},
	pages = {157--175},
	year = {2004},
	language = {English},
	doi = {10.1016/j.topol.2004.06.008},
	zbMATH = {2136837},
	Zbl = {1063.57006}
}

@article{fenn1995trunks,
	author = {Fenn, Roger and Rourke, Colin and Sanderson, Brian},
	title = {Trunks and classifying spaces},
	fjournal = {Applied Categorical Structures},
	journal = {Appl. Categ. Struct.},
	issn = {0927-2852},
	volume = {3},
	number = {4},
	pages = {321--356},
	year = {1995},
	language = {English},
	doi = {10.1007/BF00872903},
	zbMATH = {854351},
	Zbl = {0853.55021}
}

@article{ghrist2008barcodes,
	author = {Ghrist, Robert},
	title = {Barcodes: the persistent topology of data},
	fjournal = {Bulletin of the American Mathematical Society. New Series},
	journal = {Bull. Am. Math. Soc., New Ser.},
	issn = {0273-0979},
	volume = {45},
	number = {1},
	pages = {61--75},
	year = {2008},
	language = {English},
	doi = {10.1090/S0273-0979-07-01191-3},
	zbMATH = {5493248},
	Zbl = {1391.55005}
}

@article{grigoryan2015cohomology,
	author = {Grigor'yan, Alexander and Lin, Yong and Muranov, Yuri and Yau, Shing-Tung},
	title = {Cohomology of digraphs and (undirected) graphs},
	fjournal = {The Asian Journal of Mathematics},
	journal = {Asian J. Math.},
	issn = {1093-6106},
	volume = {19},
	number = {5},
	pages = {887--931},
	year = {2015},
	language = {English},
	doi = {10.4310/AJM.2015.v19.n5.a5},
	zbMATH = {6534664},
	Zbl = {1329.05132}
}

@article{grigoryan2018path,
	author = {Grigor'yan, Alexander and Muranov, Yuri and Vershinin, Vladimir and Yau, Shing-Tung},
	title = {Path homology theory of multigraphs and quivers},
	fjournal = {Forum Mathematicum},
	journal = {Forum Math.},
	issn = {0933-7741},
	volume = {30},
	number = {5},
	pages = {1319--1337},
	year = {2018},
	language = {English},
	doi = {10.1515/forum-2018-0015},
	zbMATH = {6947027},
	Zbl = {1404.55010}
}

@article{gugumcu2017new,
	author = {G{\"u}g{\"u}mc{\"u}, Neslihan and Kauffman, Louis H.},
	title = {New invariants of knotoids},
	fjournal = {European Journal of Combinatorics},
	journal = {Eur. J. Comb.},
	issn = {0195-6698},
	volume = {65},
	pages = {186--229},
	year = {2017},
	language = {English},
	doi = {10.1016/j.ejc.2017.06.004},
	zbMATH = {6767276},
	Zbl = {1373.57012}
}

@book{hatcher2002algebraic,
	author    = {Hatcher, Allen},
	title     = {Algebraic Topology},
	year      = {2002},
	publisher = {Cambridge University Press},
	address   = {Cambridge},
	isbn      = {0-521-79540-0}
}

@article{hrencecin2007biquandles,
	author = {Hrencecin, David and Kauffman, Louis H.},
	title = {Biquandles for virtual knots},
	fjournal = {Journal of Knot Theory and its Ramifications},
	journal = {J. Knot Theory Ramifications},
	issn = {0218-2165},
	volume = {16},
	number = {10},
	pages = {1361--1382},
	year = {2007},
	language = {English},
	doi = {10.1142/S0218216507005841},
	zbMATH = {5274873},
	Zbl = {1161.57004}
}

@misc{lin2019weighted,
	author = {Yong Lin and Shiquan Ren and Chong Wang and Jie Wu},
	title = {Weighted {Path} homology of {Weighted} {Digraphs} and {Persistence}},
	year = {2019},
	howpublished = {Preprint, {arXiv}:1910.09891},
	url = {https://arxiv.org/abs/1910.09891},
	arXiv = {arXiv:1910.09891}
}

@article{lutgehetmann2020computing,
	title={Computing persistent homology of directed flag complexes},
	author={L{\"u}tgehetmann, Daniel and Govc, Dejan and Smith, Jason P and Levi, Ran},
	journal={Algorithms},
	volume={13},
	number={1},
	pages={19},
	year={2020},
	publisher={MDPI}
}

@article{masulli2016topology,
	title={The topology of the directed clique complex as a network invariant},
	author={Masulli, Paolo and Villa, Alessandro EP},
	journal={SpringerPlus},
	volume={5},
	number={1},
	pages={388},
	year={2016},
	publisher={Springer}
}

@article{kauffman2012introduction,
	author = {Kauffman, Louis H.},
	title = {Introduction to virtual knot theory},
	fjournal = {Journal of Knot Theory and its Ramifications},
	journal = {J. Knot Theory Ramifications},
	issn = {0218-2165},
	volume = {21},
	number = {13},
	pages = {1240007, 37},
	year = {2012},
	language = {English},
	doi = {10.1142/S021821651240007X},
	zbMATH = {6109758},
	Zbl = {1255.57005}
}

@article{kuperberg2003what,
	author = {Kuperberg, Greg},
	title = {What is a virtual link?},
	fjournal = {Algebraic \& Geometric Topology},
	journal = {Algebr. Geom. Topol.},
	issn = {1472-2747},
	volume = {3},
	pages = {587--591},
	year = {2003},
	language = {English},
	doi = {10.2140/agt.2003.3.587},
	url = {https://eudml.org/doc/123263},
	zbMATH = {1985391},
	Zbl = {1031.57010}
}

@article{ivanov2024simplicial,
	author = {Ivanov, Sergei O. and Pavutnitskiy, Fedor},
	title = {Simplicial approach to path homology of quivers, marked categories, groups and algebras},
	fjournal = {Journal of the London Mathematical Society. Second Series},
	journal = {J. Lond. Math. Soc., II. Ser.},
	issn = {0024-6107},
	volume = {109},
	number = {1},
	pages = {68},
	note = {Id/No e12812},
	year = {2024},
	language = {English},
	doi = {10.1112/jlms.12812},
	zbMATH = {7808985},
	Zbl = {1537.18014}
}

@book{elhamdadi2015quandles,
	author    = {Elhamdadi, Mohamed and Nelson, Sam},
	title     = {Quandles: An Introduction to the Algebra of Knots},
	year      = {2015},
	publisher = {American Mathematical Society},
	address   = {Providence, RI},
	series    = {Student Mathematical Library},
	volume    = {74}
}

@article{mendez2023directed,
	author = {M{\'e}ndez, David and S{\'a}nchez-Garc{\'{\i}}a, Rub{\'e}n J.},
	title = {A directed persistent homology theory for dissimilarity functions},
	fjournal = {Journal of Applied and Computational Topology},
	journal = {J. Appl. Comput. Topol.},
	issn = {2367-1726},
	volume = {7},
	number = {4},
	pages = {771--813},
	year = {2023},
	language = {English},
	doi = {10.1007/s41468-023-00124-x},
	zbMATH = {7772364},
	Zbl = {1528.55004}
}

@article{nelson2017quantum,
	author = {Nelson, Sam and Orrison, Michael E. and Rivera, Veronica},
	title = {Quantum enhancements and biquandle brackets},
	fjournal = {Journal of Knot Theory and its Ramifications},
	journal = {J. Knot Theory Ramifications},
	issn = {0218-2165},
	volume = {26},
	number = {5},
	pages = {24},
	note = {Id/No 1750034},
	year = {2017},
	language = {English},
	doi = {10.1142/S0218216517500341},
	zbMATH = {6731377},
	Zbl = {1396.57023}
}

@article{nelson2019biquandle,
	author = {Nelson, Sam and Oshiro, Kanako and Shimizu, Ayaka and Yaguchi, Yoshiro},
	title = {Biquandle virtual brackets},
	fjournal = {Journal of Knot Theory and its Ramifications},
	journal = {J. Knot Theory Ramifications},
	issn = {0218-2165},
	volume = {28},
	number = {11},
	pages = {22},
	note = {Id/No 1940003},
	year = {2019},
	language = {English},
	doi = {10.1142/S0218216519400030},
	zbMATH = {7139440},
	Zbl = {1436.57014}
}

@misc{nelson2024quandle,
	author = {Sam Nelson},
	title = {Quandle {Cohomology} {Quiver} {Representations}},
	year = {2024},
	howpublished = {Preprint, {arXiv}:2411.02153 [math.{GT}] (2024)},
	url = {https://arxiv.org/abs/2411.02153},
	arXiv = {arXiv:2411.02153}
}

@ARTICLE{reimann2017cliques,
	
	AUTHOR={Reimann, Michael W.  and Nolte, Max  and Scolamiero, Martina  and Turner, Katharine  and Perin, Rodrigo  and Chindemi, Giuseppe  and Dłotko, Paweł  and Levi, Ran  and Hess, Kathryn  and Markram, Henry },
	
	TITLE={Cliques of Neurons Bound into Cavities Provide a Missing Link between Structure and Function},
	
	JOURNAL={Frontiers in Computational Neuroscience},
	
	VOLUME={Volume 11 - 2017},
	
	YEAR={2017},
	
	URL={https://www.frontiersin.org/journals/computational-neuroscience/articles/10.3389/fncom.2017.00048},
	
	DOI={10.3389/fncom.2017.00048},
	
	ISSN={1662-5188}}

@article{hepworth2025reach,
	author = {Hepworth, Richard and Roff, Emily},
	title = {The reachability homology of a directed graph},
	fjournal = {IMRN. International Mathematics Research Notices},
	journal = {Int. Math. Res. Not.},
	issn = {1073-7928},
	volume = {2025},
	number = {3},
	pages = {18},
	note = {Id/No rnae280},
	year = {2025},
	language = {English},
	doi = {10.1093/imrn/rnae280},
	zbMATH = {8005333},
	Zbl = {1560.55012}
}

@article{turaev2012knotoids,
	author = {Turaev, Vladimir},
	title = {Knotoids},
	fjournal = {Osaka Journal of Mathematics},
	journal = {Osaka J. Math.},
	issn = {0030-6126},
	volume = {49},
	number = {1},
	pages = {195--223},
	year = {2012},
	language = {English},
	zbMATH = {6029033},
	Zbl = {1271.57030}
}

@article{zomorodian2005computing,
	author = {Zomorodian, Afra and Carlsson, Gunnar},
	title = {Computing persistent homology},
	fjournal = {Discrete \& Computational Geometry},
	journal = {Discrete Comput. Geom.},
	issn = {0179-5376},
	volume = {33},
	number = {2},
	pages = {249--274},
	year = {2005},
	language = {English},
	doi = {10.1007/s00454-004-1146-y},
	zbMATH = {2156870},
	Zbl = {1069.55003}
}

\end{document}